\newcommand{\CIP}{\mathop{\perp\!\!\!\perp}}
\title{Decision-theoretic foundations for statistical causality}
\author{A.~Philip~Dawid\thanks{University of Cambridge}}
\newcommand{\ie}{{\em i.e.\/}\xspace}
\newcommand{\eg}{{\em e.g.\/}\xspace}
\newcommand{\etc}{{\em etc.\/}\xspace}
\newcommand{\HIDE}[1]{ }
\newcommand{\COMMENT}[1]{ }
\renewcommand{\dag}{{\cal D}}
\newcommand{\half}{\frac{1}{2}}
\newcommand{\eqref}[1]{\mbox{(\ref{eq:#1})}}
\newcommand{\appref}[1]{\mbox{Appendix~\ref{sec:#1}}}
\newcommand{\secref}[1]{\mbox{\S$\,$\ref{sec:#1}}}
\newcommand{\figref}[1]{\mbox{Figure~\ref{fig:#1}}}
\newcommand{\lemref}[1]{\mbox{Lemma~\ref{lem:#1}}}
\newcommand{\tabref}[1]{\mbox{Table~\ref{tab:#1}}}
\newcommand{\defref}[1]{\mbox{Definition~\ref{def:#1}}}
\newcommand{\fnref}[1]{\mbox{footnote~\ref{fn:#1}}}
\newcommand{\itref}[1]{\mbox{\ref{it:#1}}}
\newcommand{\thmref}[1]{\mbox{Theorem~\ref{thm:#1}}}
\newcommand{\corref}[1]{\mbox{Corollary~\ref{cor:#1}}}
\newcommand{\condref}[1]{\mbox{Condition~\ref{cond:#1}}}
\newcommand{\remref}[1]{\mbox{Remark~\ref{rem:#1}}}
\newcommand{\Secref}[1]{\mbox{Section~\ref{sec:#1}}}
\newcommand{\pre}{{\rm pre}}
\newcommand{\suc}{{\rm suc}}
\newcommand{\cd}{\,|\,}
\newcommand{\E}{{\mbox{E}}}
\newcommand{\var}{{\mbox{var}}}
\newcommand{\cip}{\mbox{$\perp\!\!\!\perp$}}
\newcommand{\indo}[2]{\mbox{$#1 \,\cip\, #2$}}
\newcommand{\ind}[3]{\mbox{$#1 \, \cip\, #2 \mid #3$}}
\newtheorem{expl}{Example}
\newtheorem{definer}{Definition}
\newtheorem{theorem}{Theorem}
\newtheorem{algor}{Algorithm}
\newtheorem{lemma}{Lemma}
\newtheorem{lem}[lemma]{Lemma}
\newtheorem{cor}{Corollary}
\newtheorem{cond}{Condition}
\newtheorem{rem*}{Remark}
\newcommand{\halm}{\hspace*{\fill} $\Box$\par}
\newenvironment{proof}{\noindent {\bf Proof. }}{\halm\vspace{\baselineskip}}
\newenvironment{proof0}[1]{\noindent {\bf Proof of
    {#1}. }}{\halm\vspace{\baselineskip}}
\newenvironment{ex}{\begin{expl}\rm}{\halm\end{expl}}
\newenvironment{rem}{\begin{rem*}\rm}{\halm\end{rem*}}
\newenvironment{defn}{\begin{definer}\rm}{\halm\end{definer}}
\newcommand{\pr}[1]{{p}(#1)}
\renewcommand{\condref}[1]{\mbox{Condition~\ref{cond:#1}}}
 \renewcommand{\theenumi}{(\roman{enumi})}
\renewcommand{\pr}{\mbox{\rm pr}}
\newcommand{\bmu}{\mbox{\boldmath$\mu$}}
\newcommand{\norm}{\mbox{${\cal N}$}}
\newcommand{\idle}{\mbox{$\emptyset$}}
\newcommand{\ice}{\mbox{\rm ICE}\xspace}
\newcommand{\ace}{\mbox{\rm ACE}\xspace}
\newcommand{\citep}{\cite}
\newcommand{\citet}{\textcite}
\begin{document}
\maketitle

\begin{abstract}
  \noindent We develop a mathematical and interpretative foundation
  for the enterprise of decision-theoretic statistical causality (DT),
  which is a straightforward way of representing and addressing causal
  questions.  DT reframes causal inference as ``assisted
  decision-making'', and aims to understand when, and how, I can make
  use of external data, typically observational, to help me solve a
  decision problem by taking advantage of assumed relationships
  between the data and my problem.

  The relationships embodied in any representation of a causal problem
  require deeper justification, which is necessarily
  context-dependent.  Here we clarify the considerations needed to
  support applications of the DT methodology.  Exchangeability
  considerations are used to structure the required relationships, and
  a distinction drawn between intention to treat and intervention to
  treat forms the basis for the enabling condition of
  ``ignorability''.
  
  We also show how the DT perspective unifies and sheds light on other
  popular formalisations of statistical \mbox{causality}, including
  potential responses and directed acyclic graphs.
  \\

  \noindent {\em Key words:\/}
  directed acyclic graph,
  exchangeability,
  extended conditional independence,
  ignorability,
  potential outcome,
  single world intervention graph
\end{abstract}

\section{Introduction}
\label{sec:intro}

The decision-theoretic (DT) approach to statistical causality has been
described and developed in a series of papers
\cite{apd:cinfer,apd:infdiags,apd:hsss,vd/apd/sg:uai,apd:kent,sara:07,apd/vd:uai08,dd:ss,hg/apd:aistats2010,sgg/apd:ett,apd:causDTchapter,carlo/phil/vanessa:dynamicchapter,apd/pc:2014,hg/apd/gmb:suffcov};
for general overview see \textcite{apd:gtp,apd:annrev}.  It has been
shown to be a more straightforward approach, both philosophically and
for use in applications, than other popular frameworks for statistical
causality based \eg\ on potential responses or directed acyclic
graphs.

From the standpoint of DT, ``causal inference'' is something of a
misnomer for the great preponderance of the methodological and applied
contributions that normally go by this description.  A better
characterisation of the field would be ``assisted decision making''.
Thus the DT approach focuses on how we might make use of
external---typically observational---data to help inform a
decision-maker how best to act; it aims to characterise conditions
allowing this, and to develop ways in which it can be achieved.  Work
to date has concentrated on the nuts and bolts of showing how the DT
approach may be applied to a variety of problems, but has largely
avoided any detailed consideration of how the conditions enabling such
application might be justified in terms of still more fundamental
assumptions.  The main purpose of the present paper is to to conduct a
careful and rigorous analysis, to serve as a foundational ``prequel''
to the DT enterprise.  We develop, in detail, the basic structures and
assumptions that, when appropriate, would justify the use of a DT
model in a given context---a step largely taken for granted in earlier
work.  We emphasise important distinctions, such as that between cause
and effect variables, and that between intended and applied treatment,
both of which are reflected in the formal language; another important
distinction is that between post-treatment and pre-treatment
exchangeability.  The rigorous development is based on the algebraic
theory of extended conditional independence, which admits both
stochastic and non-stochastic variables
\cite{apd:CIST,apd:ciso,pc/apd:eci}, and its graphical representation
\cite{apd:infdiags}.

We also consider the relationships between DT and alternative current
formulations of statistical causality, including potential outcomes
\cite{dbr:jep,dbr:as}, Pearlian DAGs \cite{pearl:book}, and single
world intervention graphs
\cite{Richardson_primer,Richardson_singleworld}.  We develop DT
analogues of concepts that have been considered fundamental in these
alternative approaches, including consistency, ignorability, and the
stable unit-treatment value assumption.  In view of these connexions,
we hope that this foundational analysis of DT causality will also be
of interest and value to those who would seek a deeper understanding
of their own preferred causal framework, and in particular of the
conditions that need to be satisfied to justify their models.

\subsection*{Plan of paper}
\Secref{dt} describes, with simple examples, the basics of the DT
approach to modelling problems of ``statistical causality'', noting in
particular the usefulness of introducing a non-stochastic variable
that allows us to distinguish between the different
regimes---observational and interventional---of interest.  It shows
how assumed relationships between these regimes, intended to support
causal inference, may be fruitfully expressed using the language and
notation of extended conditional independence, and represented
graphically by means of an augmented directed acyclic graph.

In \secref{agency} and \secref{simple} we describe and illustrate the
standard approach to modelling a decision problem, as represented by a
decision tree.  The distinction between cause and effect is reflected
by regarding a cause as a non-stochastic decision variable, under the
external control of the decision-maker, while an effect is a
stochastic variable, that can not be directly controlled in this way.
We introduce the concept of the hypothetical distribution for an
effect variable, were a certain action to be taken, and point out that
all we need, to solve the decision problem, is the collection of all
such hypothetical distributions.

\Secref{populating} frames the purpose of ``causal inference'' as
taking advantage of external data to help me solve my decision
problem, by allowing me to update my hypothetical distributions
appropriately.  This is elaborated in \secref{exchpred}, where we
relate the external data to my own problem by means of the concept of
exchangeability.  We distinguish between post-treatment
exchangeability, which allows straightforward use of the data, and
pre-treatment exchangeability, which can not so use the data without
making further assumptions.  These assumptions---especially,
ignorability---are developed in \secref{assig}, in terms of a clear
formal distinction between intention to treat and intervention to
treat.  In \secref{idle} we develop this formalism further,
introducing the non-stochastic regime indicator that is central to the
DT formulation.  \Secref{cov} generalises this by introducing
additional covariate information, while \secref{complex} generalises
still further to problems represented by a directed acyclic graph.  In
\secref{comparison} we highlight similarities and differences between
the DT approach to statistical causality and other formalisms,
including potential outcomes, Pearlian DAGs, and single-world
intervention graphs.  These comparisons and contrasts are explored
further in \secref{gcomp}, by application to a specific problem, and
it is shown how the DT approach brings harmony to the babel of
different voices.  \Secref{disc} rounds off with a general discussion
and suggestions for further developments.  Some technical proofs are
relegated to \appref{thmproof}.

\section{The DT approach}
\label{sec:dt}

Here we give a brief overview of the DT perspective on modelling
problems of statistical causality.

A fundamental feature of the DT approach is its consideration of the
relationships between the various probability distributions that
govern different regimes of interest.  As a very simple example,
suppose that we have a binary treatment variable $T$, and a response
variable $Y$.  We consider three different regimes, indexed by the
values of a non-stochastic regime indicator variable
$F_T$:\footnote{The use of explicit intervention variables such as
  $F_T$ was pioneered by \textcite{pearl:isi,pearl:sdlc.disc},
  although, for reasons obscure to this author, he seems largely to
  have abandoned it very quickly.}
\begin{description}
\item[$F_T = 1$.]  This is the regime in which the active treatment is
  administered to the patient
\item [$F_T = 0$.] This is the regime in which the control treatment is
  administered to the patient
\item[$F_T = \idle$.] This is a regime in which the choice of
  treatment is left to some uncontrolled external source.
\end{description}

The first two regimes may be described as {\em interventional\/}, and
the last as {\em observational\/}.  In each regime there will be a
joint distribution for the treatment and response variables, $T$ and
$Y$.  The distribution of $T$ will be degenerate under an
interventional regime (with $T=1$ almost surely under $F_T = 1$ and
$T=0$ almost surely under $F_T = 0$); but $T$ will typically have a
non-degenerate distribution in the observational regime

It will often be the case that I have access to data collected under
the observational regime $F_T = \idle$; but for decision-making
purposes I am interested in comparing and choosing between two
interventions available to me, $F_T = 1$ and $F_T = 0$, for which I do
not have direct access to relevant data.  I can only use the
observational data to address my decision problem if I can make, and
justify, appropriate assumptions relating the distributions associated
with the different regimes.

The simplest such assumption (which, however, will often not be easy
to justify) is that the distribution of $Y$ in the interventional
active treatment regime $F_T=1$ is the same as the conditional
distribution of $Y$, given $T=1$, in the observational regime
$F_T=\idle$; and likewise the distribution of $Y$ under regime $F_T=0$
is the same as the conditional distribution of $Y$ given $T=0$ in the
regime $F_T=\idle$.  This assumption can be expressed, in the
conditional independence notation of \textcite{apd:CIST}, as:
\begin{equation}
\label{eq:simple}
  \ind Y {F_T} T,
\end{equation}
(read: ``$Y$ is independent of $F_T$, given $T$''), which asserts that
the conditional distributions of the response $Y$, given the
administered treatment $T$, does not further depend on $F_T$ (\ie, on
whether that treatment arose naturally, in the observational regime,
or by an imposed intervention), and so can be chosen to be the same in
all three regimes.

Note, importantly, that the conditional independence assertion
\eqref{simple} makes perfect intuitive sense, even though the variable
$F_T$ that occurs in it is non-stochastic.
The intuitive content of \eqref{simple} is made fully rigorous by the
theory of extended conditional independence (ECI)
\cite{apd:ciso,pc/apd:eci}, which shows that such expressions can,
with care, be manipulated in exactly the same way as when all
variables are stochastic.

Property~\eqref{simple} can also be expressed graphically, by the
augmented DAG (directed acyclic graph) \cite{apd:infdiags} of
\figref{simple}.  Again, we can include both stochastic variables
(represented by round nodes) and non-stochastic variables (square
nodes) in such a graph, which encodes extended conditional
independence by means of the $d$-separation criterion
\cite{geiger/verma/pearl:90} or the equivalent moralisation criterion
\cite{sll/apd/bnl/hgl:directed}.  In \figref{simple} it is the absence
of an arrow from $F_T$ to $Y$ that encodes property~\eqref{simple}.

\begin{figure}[htbp]
  \begin{center}
    \resizebox{1.5in}{!}{\includegraphics{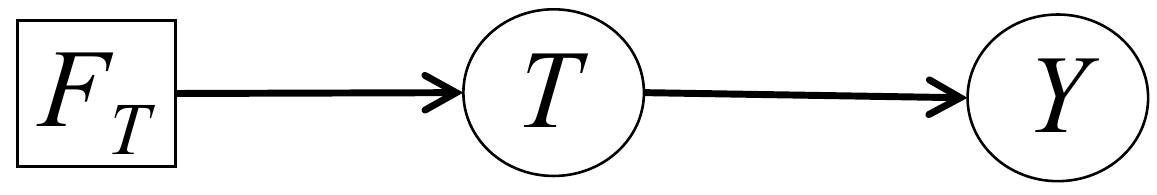}}
    \caption{A simple augmented DAG}
    \label{fig:simple}
  \end{center}
\end{figure}

The identity, expressed by \eqref{simple}, of the conditional
distribution of $Y$ given $T$, across all the regimes described by the
values of the regime indicator $F_T$, can be understood as expressing
the {\em invariance\/} or {\em stability\/}
\cite{bhlmann2018invariance} of a probabilistic ingredient---the
conditional distribution of $Y$, given $T$---across the different
regimes.  This is thus being regarded as a modular component,
unchanged wherever it appears in any of the regimes.  When it can be
justified, the stability property represented by \eqref{simple} or
\figref{simple} permits {\em transfer\/} \cite{pearl/bareinboim} of
relevant information between the regimes: we can use the (available,
but not directly interesting) observational data to estimate the
distributions of response $Y$ given treatment $T$ in regime
$F_T=\idle$; and then regard these observational conditional
distributions as also supplying the desired interventional
distributions of $Y$ (of interest, but not directly available) in the
hypothetical regimes $F_T=1$ and $F_T=0$ relevant to my decision
problem.\footnote{An important aside on notation and terminology.  In
  the potential outcome (PO) approach, the response $Y$ is
  artificially split into two, $Y_0$ and $Y_1$, it being supposed that
  $Y_t$ is what is observed in regime $F_t$---the marginal
  distribution of $Y_t$ thus being the same as our hypothetical
  distribution for $Y$ under intervention $F_T=t$.  This duplication
  of the response is entirely unnecessary for our purposes.  Moreover,
  there is a very prevalent misuse of terms such as ``counterfactual
  distribution'', or ``estimating the counterfactual'',
  notwithstanding that there is nothing counter to any known fact
  involved in considering these distributions, which are to be applied
  to a new case.  We have termed the interventional distributions of
  $Y$ {\em hypothetical\/}, since they are predicated on a
  hypothetical intervention on a new case.  I have elsewhere
  \cite{apd:kent} expanded on the importance of distinguishing between
  hypothetical and counterfactual reasoning, which is jeopardised when
  we do not also make a clear terminological distinction.}
Characterising, justifying, and capitalising on such modularity
properties are core features of the DT approach to causality.

A more complex example is given by the DAG of \figref{instrument},
which represents a problem where $Z$ is an instrumental variable for
the effect of a binary exposure variable $X$ on an outcome variable
$Y$, in the presence of unobserved ``confounding variables'' $U$.
Note again the inclusion of the regime indicator $F_X$, with values
$0$, $1$ and $\idle$.  As before, $F_X = \idle$ labels the
observational regime in which data are actually obtained, while
$F_X = 1$ [resp., 0] labels the hypothetical regime where we intervene
to force $X$ to take the value 1 [resp., 0].
 
\begin{figure}[htbp]
  \begin{center}
    \resizebox{1.5in}{!}{\includegraphics{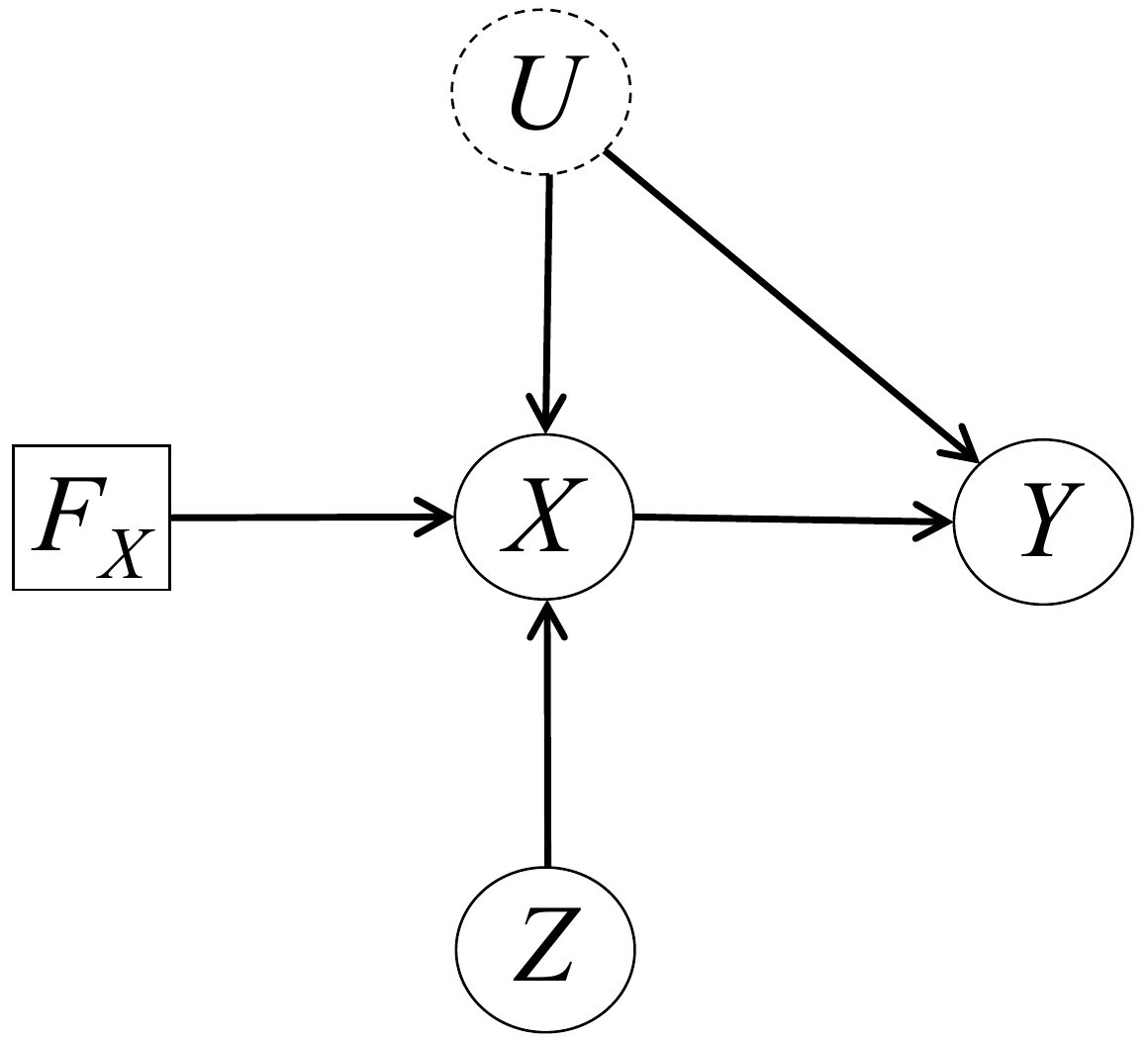}}
    \caption{Instrumental variable with regimes}
    \label{fig:instrument}
  \end{center}
\end{figure}

The figure is nothing more nor less than the graphical representation
of the following extended conditional independence properties (which
it embodies by means of $d$-separation):
\begin{eqnarray}
  \label{eq:instr0}
  (Z,U)   &\cip& F_X\\
  \label{eq:instr1}
  U &\cip& Z  \,\,\,\,\,\,\cd \,\, F_X\\
  \label{eq:instr2}
   Y &\cip& Z \,\,\,\,\,\,\cd \,\,(X, U, F_X)\\
  \label{eq:instr3}
  Y &\cip& F_X \,\,\cd \,\,(X, U).
\end{eqnarray}
In words, \eqref{instr0} asserts that the joint distribution of $Z$
and $U$ is a modular component, the same in all 3 regimes, while
\eqref{instr1} further requires that, in this (common) joint
distribution, we have independence between $U$ and $Z$.  Next,
\eqref{instr2} says that, in any regime, the response $Y$ is
independent of the instrument $Z$, conditionally on exposure $X$ and
confounders $U$ (the ``exclusion restriction''); while \eqref{instr3}
further requires that the conditional distribution for $Y$, given $X$
and $U$ (which, by \eqref{instr2}, is unaffected by further
conditioning on $Z$) be the same in all regimes.

We emphasise that properties \eqref{instr0}--\eqref{instr3} comprise
the full extent of the causal assumptions made.  In particular---and
in contrast to other common interpretations of a ``causal graph''
\cite{apd:beware}--- no further causal conclusions should be drawn
from the directions of the arrows in \figref{instrument}.  In
particular, the arrow from $Z$ to $X$ should not be interpreted as
implying a causal effect of $Z$ on $X$: indeed, the figure is fully
consistent with alternative causal assumptions, for example that $Z$
and $X$ are merely associated by sharing a common cause
\cite{apd:beware}.  In general, the causal content of any augmented
DAG is to be understood as fully comprised by the extended conditional
independencies that it embodies by $d$-separation.  This gives a
precise and clear semantics to our ``causal DAGs''.

To the extent that the assumptions embodied in \figref{instrument}
imply restrictions on the observational distribution of the data (\ie,
properties \eqref{instr1} and \eqref{instr2}, considered only under
the operation of the observational regime $F_X = \idle$), they tally
with the standard assumptions made in instrumental variable analysis
\cite{hernan:06}.  However, without the additional stitching together
of behaviours under the observational regime and the desired, but
unobserved, interventional regimes, it is not possible to use the
observational data to make causal inferences.  When, and only when,
these additional stability assumptions can be made can we justify
application of the usual methods of instrumental variable analysis.

In previous work, we have used the above formulation in terms of
extended conditional independences, involving both stochastic
variables and non-stochastic regime indicators, as the starting point
for analysis and discussion of statistical causality, both in general
terms and in particular applications.  In this work, we aim to dig a
little deeper into the foundations, and in particular to understand
why, when, and how we might justify the specific extended conditional
independence properties previously simply assumed.

\section{Causality, agency and decision}
\label{sec:agency}

There is a very wide variety of philosophical understandings and
interpretations of the concept of ``causality''.  Our own approach is
closely aligned with the ``agency'' interpretation
\cite{reichenbach,price:bjps91,hausman:book,woodward:book,woodward:sep},
whereby a ``cause'' is understood as something that can (at least in
principle) be externally manipulated---this notion being an undefined
primitive, whose intended meaning is easy enough to comprehend
intuitively in spite of being philosophically contentious \cite{webb:NS}.
This is not to deny the value of other interpretations of causality,
based for example on mechanisms \cite{salmon:book,dowe:book},
simplicity \cite{algmark}, probabilistic independence
\cite{suppes:book,spohn:mcg} or invariant processes
\cite{bhlmann2018invariance}, or starting from different primitive
notions, such as common cause or direct effect \cite{sgs:book}, or one
variable ``listening to'' another \cite{pearl:why}.  However, the
present work has the limited aim of explicating the agency-based
decision-theoretic approach.

The basic idea is that an agent (``I'', say) has free choice among a
set of available actions, and that performing an action will, in some
sense, tend to bring about some outcome.  Indeed, whenever I seriously
contemplate performing some action, my purpose is to bring about some
desired outcome; and that aim will inform my choice between the
different actions that may be available.  We may consider my action as
a putative ``cause'' of my outcome.  This approach makes a clear
distinction between cause and effect: the former is represented as an
action, subject to my free choice, while the latter is represented as
an outcome variable, over which I have no direct control.
Correspondingly, we will need different formal representations for
cause and effect variables: only the latter will be treated as
stochastic random variables.

Now by my action I generally won't be able to determine the outcome
exactly, since it will also be affected by many circumstances beyond
my control, which we might ascribe to the vagaries of ``Nature''.  So
I will have uncertainty about the eventual outcome that would ensue
from my action.  We shall take it for granted that it is always
appropriate to represent my uncertainty by a probability distribution.
Then, for any contemplated but not yet executed action $a$, there will
be a joint probability distribution $P_a$ over all the ensuing
variables in the problem%
\footnote{In full generality, the relevant collection of ensuing
  variables could itself depend on my action $a$; purely for
  simplicity we shall restrict to the case that it does not.},
representing my current uncertainty (conditioned on whatever knowledge
I currently have, prior to choosing my action) about how those
variables {\em might\/} turn out, {\em were I to perform\/} action
$a$.  We will term such a distribution $P_a$ {\em hypothetical\/},
since it is premised on the {\em hypothesis\/} that I perform action
$a$.

There will be a collection ${\cal A}$ of actions available to me, and
correspondingly an associated collection $\{P_a: a\in{\cal A}\}$ of my
hypothetical distributions---each contingent on just one of the
actions I might take.  My task is to rank my preferences among these
different hypothetical distributions over future outcomes, and perform
that action corresponding to the distribution $P_a$ I like best.  I
can do this ranking in terms of any feature of the distributions that
interests me.

One such way, concordant with Bayesian statistical decision theory
\cite{raiffa:schlaifer:61,degroot:70}, is to construct a real-valued
loss function $L$, such that $L(y,a)$ measures the dissatisfaction I
will suffer if I take action $a$ and the value of some associated
outcome variable $Y$ later turns out to be $y$. This is represented in
the decision tree of \figref{0}.
\begin{figure}[htbp]
\begin{center}
  \resizebox{3in}{!}{\includegraphics{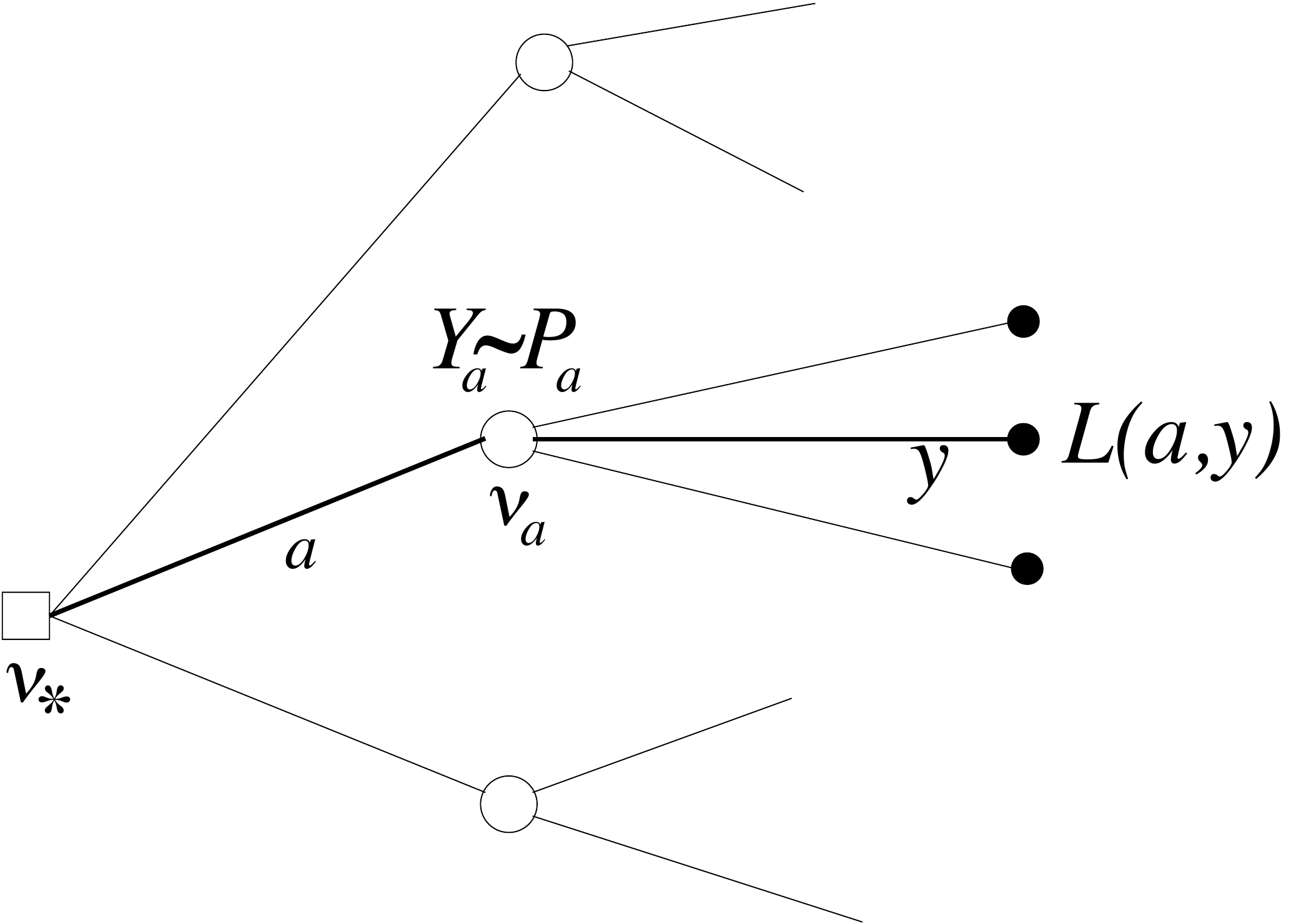}}
  \caption{Decision tree}
\label{fig:0}
\end{center}
\end{figure}


The square at node $\nu_*$ indicates that it is a decision node, where
I can choose my action, $a$.  The round node $\nu_a$ indicates the
generation of the stochastic outcome variable, $Y$, whose hypothetical
distribution $P_a$ will typically depend on the chosen action $a$.


Since, at node $\nu_a$, $Y \sim P_a$, the (negative) value of taking
action $a$, and thus getting to $\nu_a$, is measured by the expected
loss $L(a) := E_{Y\sim P_a} \{L(Y,a)\}$.  The principles of
statistical decision analysis now require that, at the decision node
$\nu_*$, I should choose an action $a$ minimising $L(a)$.


Note particularly that, whatever loss function is used, this solution
will only require knowledge of the collection $\{P_a\}$ of
hypothetical distributions for the outcome variable $Y$.

There are decision problems where explicit inclusion of the action $a$
as an argument of the loss function is natural.  For example, I might
have a choice between taking my umbrella ($a = 1$) when I go out, or
leaving it at home ($a = 0$).  For either action, the relevant binary
outcome variable $Y$ indicates whether it rains ($Y=1$) or not
($Y=0$).  The loss is 1 if I get wet, 0 otherwise, so that
$L(0,0) = L(0,1) = L(1,1) = 0$, $L(1,0) = 1$.  In this case, my action
presumably has no effect on the outcome $Y$, so that I might take
$P_1$ and $P_0$ to be identical; but it enters non-trivially into the
loss function.  However, it is arguable whether such a problem, where
the only effect of my action is on the loss, can properly be described
as one of causality.  In typical causal applications, the loss
function will depend only on the value $y$ of $Y$, and not further on
my action---so that $L(y,a)$ simplifies to $L(y)$.  The only thing
depending on $a$ will then be my hypothetical distribution $P_a$ for
$Y$, subsequent to (``caused by'') my taking action $a$.  Then
$L(a) = E_{Y\sim P_a} \{L(Y)\}$, and my choice of action effectively
becomes a choice between the different hypothetical distributions
$P_a$ for $Y$ associated with my available actions $a$: I prefer that
distribution giving the smallest expectation for $L(Y)$.  This
specialisation will be assumed throughout this work.
 
\section{A simple causal decision problem}
\label{sec:simple}
As a simple specific example, we consider the following stylised
decision problem.

\begin{ex}
  \label{ex:aspirin}
  I have a headache and am considering whether or not I should take
  two aspirin tablets.  Will taking the aspirins cause my headache to
  disappear?

  Let the binary decision variable $F_X$ denote whether I take the
  aspirin ($F_X = 1$) or not ($F_X = 0$), and let $Z$ denote the time
  it takes for my headache to go away.  For convenience only, we focus
  on $Y := \log Z$, which can take both positive and negative values.

  I myself will choose the value of $F_X$: it is a decision variable,
  and does not have a probability distribution.  Nevertheless, it is
  still meaningful to consider my conditional distribution, $P_x$ say,
  for how the eventual response $Y$ would turn out, where I to take
  decision $F_X = x$ ($x=0,1$).  For the moment we assume the
  distributions $P_0$, $P_1$ to be known---this will be relaxed in
  \secref{populating}.  Where we need to be definite, we shall, purely
  for simplicity, take $P_x$ to have the normal distribution
  $\norm(\mu_x,\sigma^2)$, with probability density function:
\begin{equation}
  \label{eq:normal}
  p_x(y) \equiv p(y \cd F_X =  x) = (2\pi\sigma^2)^{-\half} \exp -\frac{(y - \mu_x)^2}{2\sigma^2},
\end{equation}
having mean $\mu_0$ or $\mu_1$ according as $x= 0$ or $1$, and variance
$\sigma^2$ in either case.
  
The distribution $P_1$ [resp., $P_0$] expresses my {\em
  hypothetical\/} uncertainty about how $Y$ would turn out, {\em if\/}
I were to decide to take the aspirin, \ie\ under $F_X = 1$ [resp., if
I were to decide not to take the aspirin, $F_X = 0$].  It can
incorporate various sources and types of uncertainty, including
stochastic effects of external influences arising or acting between
the point of treatment application and the eventual response.  My task
is to compare the two hypothetical distributions $P_1$ and $P_0$, and
decide which one I prefer.  If I prefer $P_1$ to $P_0$, then my
decision should be to take the aspirin; otherwise, not.  Whatever
criterion I use, all I need to put it into effect, and so solve my
decision problem, is the pair of hypothetical distributions
$\{P_0, P_1\}$ for the outcome $Y$, under each of my hypothesised
actions.

One possible comparison of $P_1$ and $P_0$ might be in terms their
respective means, $\mu_1$ and $\mu_0$, for $Y$; the ``effect'' of
taking aspirin, rather than nothing, might then be quantified by means
of the change in the expected response, $\delta:= \mu_1 - \mu_0$.
This is termed the {\em average causal effect\/}, \ace (in terms of
the outcome variable $Y$---so more specifically denoted by $\ace_Y$,
if required).  Alternatively, we might look at the average causal
effect in terms of $Z = e^Y$:
$\ace_Z = \E_{P_1}(Z)-\E_{P_0}(Z) =
e^{\sigma^2/2}(e^{\mu_1}-e^{\mu_0})$, or make this comparison as a
ratio, $\E_{P_1}(Z)/\E_{P_0}(Z) = e^{\mu_1-\mu_0}$.  Or, we could
consider and compare the variance of $Z$,
$\var_x(Z) = e^{2\mu_x}(e^{2\sigma^2}-e^{\sigma^2})$ under $P_x$
($x=0,1)$.  In full generality, any comparison of an appropriately
chosen feature of the two hypothetical distributions, $P_0$ and $P_1$,
of $Y$ can be regarded as a partial summary of the {\em causal
  effect\/} of taking aspirin (as against taking nothing).

A fully decision-theoretic formulation is represented by the decision
tree of \figref{1}.
\begin{figure}[htbp]
\begin{center}
  \resizebox{3in}{!}{\includegraphics{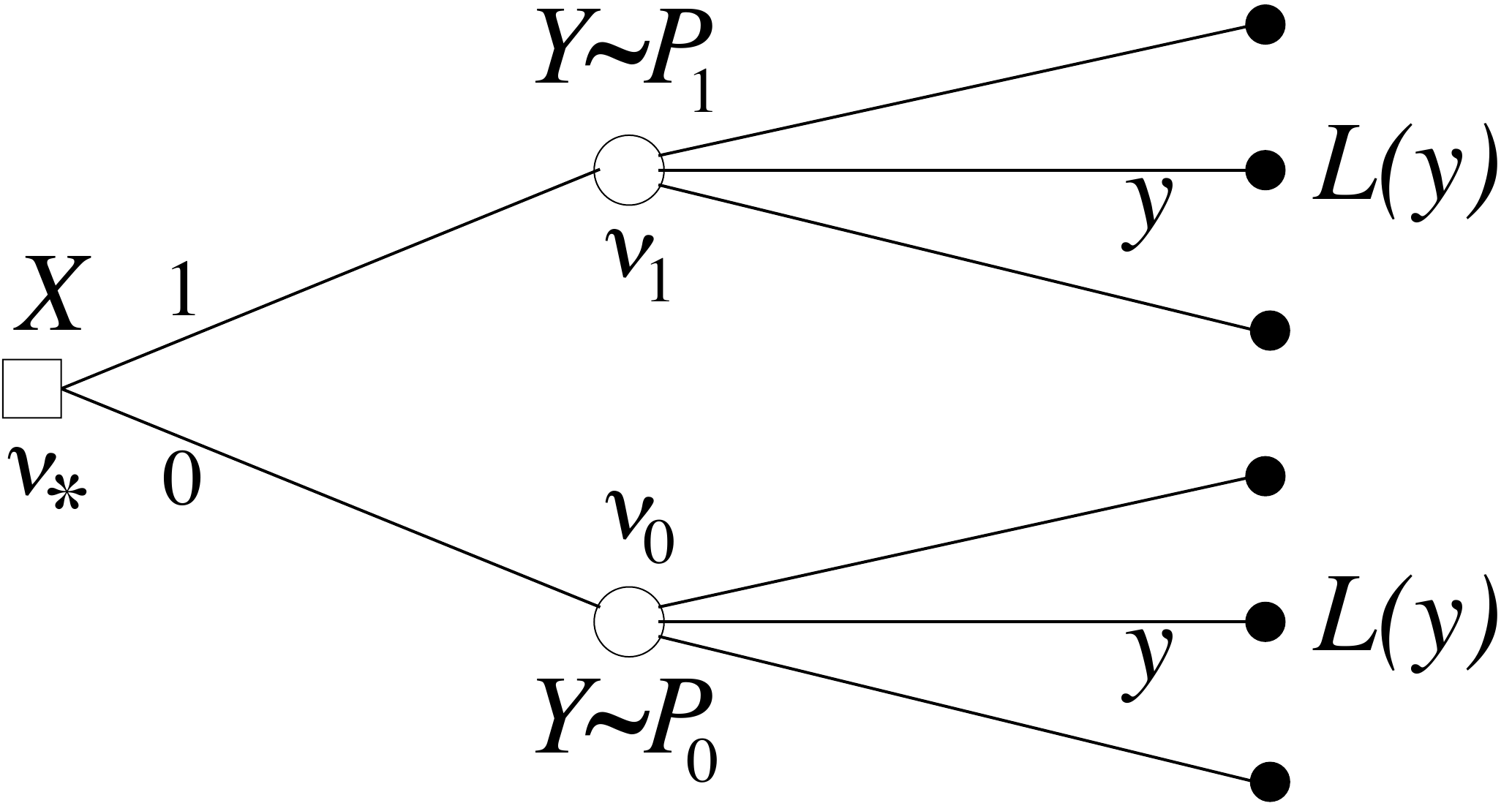}}
\caption{Decision tree}
\label{fig:1}
\end{center}
\end{figure}

Suppose (for example) that I were to measure the loss that I will
suffer if my headache lasts $z = e^y$ minutes by means of the
real-valued loss function $L(z) = \log z =y$.  If I were to take the
aspirin ($F_X = 1$), my expected loss would be
$E_{Y\sim P_1} (Y) = \mu_1$; if not ($F_X = 0$), it would be $\mu_0$.
The principles of statistical decision analysis now direct me to
choose the action leading to the smaller expected loss.  The ``effect
of taking aspirin'' might be measured by the increase in expected
loss, which in this case is just $\ace_Y$; and the correct decision
will be to take aspirin when this is negative.

Although there is no uniquely appropriate measure of ``the effect of
treatment'', in the rest of our discussion we shall, purely for
simplicity and with no real loss of generality, focus on the
difference of the means of the two hypothetical distributions for the
outcome variable $Y$:
\begin{equation}
  \label{eq:ace}
  \ace = \E_{P_1}(Y) - \E_{P_0}(Y).
\end{equation}
\end{ex}

\section{Populating the decision tree}
\label{sec:populating}
The above formulation is fine so long as I know all the ingredients in
the decision tree, in particular the two hypothetical distributions
$P_0$ and $P_1$.  Suppose, however, that I am uncertain about the
parameters $\mu_1$ and $\mu_0$ of the relevant hypothetical
distributions $P_1$ and $P_0$ (purely for simplicity we shall continue
to regard $\sigma^2$ as known).  To make explicit the dependence of
the hypothetical distributions on the parameters, we now write them as
$P_{1,\mu_1}$, $P_{0,\mu_0}$, and denote the associated density
functions by $p_{1}(y\cd \mu_1)$, $p_{0}(y \cd \mu_0)$.

\subsection{No-data decision problem}
\label{sec:nodata}

Being now uncertain about the parameter-pair $\bmu = (\mu_1, \mu_0)$,
I should assess my personalist prior probability distribution, $\Pi$
say, for $\bmu$ (in the light of whatever information I currently
have).  Let this have density $\pi(\mu_1,\mu_0)$.  To solve my
decision problem, I would then substitute, for the unknown
hypothetical distribution $P_{1,\mu_1}(y)$, my ``prior predictive''
hypothetical distribution $P^*_1$ for $Y$, with density
\begin{eqnarray*}
  p^*_1 (y) &=& \int\int p_{1}(y \cd \mu_1)\, \pi(\mu_1,\mu_0)\,d\mu_1\,d\mu_0\\
            &=& \int p_{1}(y \cd \mu_1)\, \pi_1(\mu_1)\,d\mu_1
\end{eqnarray*}
where $\pi_1(\mu_1)$ is my marginal prior density for $\mu_1$:
\begin{displaymath}
  \pi_1(\mu_1) = \int \pi(\mu_1,\mu_0)\,d\mu_0.
\end{displaymath}
Similarly, I would replace $P_{0,\mu_0}(y)$ by $P_0^*$, having density
\begin{math}
  p^*_0 (y) = \int p_{0}(y \cd \mu_0)\, \pi_0(\mu_0)\,d\mu_0,
\end{math}
where
\begin{math}
  \pi_0(\mu_0) = \int \pi(\mu_1,\mu_0)\,d\mu_1
\end{math}
is my marginal prior density for $\mu_0$.  We remark that, in parallel
to the property that, with full information, I only need to specify
the two hypothetical distributions $P_1$ and $P_0$, when I have only
partial information I only need to specify, separately, my marginal
uncertainties about the unknown parameters of each of these
distributions.  In particular, once these margins have been specified,
any further dependence structure in my joint personal probability
distribution $\Pi$ for $(\mu_1,\mu_0)$ is irrelevant to my decision
problem.

\subsection{Data}
\label{sec:data}
When in a state of uncertainty, that uncertainty can often be reduced
by gathering data.  Bayesian statistical decision theory
\cite{raiffa:schlaifer:61} shows that, for any decision problem, the
expected reduction in loss by using additional data (``the expected
value of sample information'') is always non-negative.  The effect of
obtaining data $D$ is to replace all the distributions entering in
\secref{nodata} above by their versions obtained by further
conditioning on $D$.

Suppose then that I wish to reduce my uncertainty about $\mu_1$, the
parameter of my hypothetical distribution $P_1$, by utilising relevant
data.  What data should I collect, and how should I use them?

What I might, ideally, want to do is gather together a ``treatment
group'' ${\cal T}$ of individuals whom I can regard, in an intuitive
sense, as similar to myself, with headaches similar to my own.  We
call such individuals {\em exchangeable\/} (both with each other and
with me)---this intuitive concept is treated more formally in
\secref{exchpred} below.  I then give them each two aspirins, and
observe their responses (how long until their headaches go away).
Conditionally on the parameter $\mu_1$ of $P_1 = P_{1,\mu_1}$, I could
reasonably\footnote{See \secref{exchpred} for formal justification.}
model these responses as being independently and identically
distributed, with the same distribution, $P_{1,\mu_1}$, that would
describe my own uncertainty about my own outcome, $Y$, were I,
hypothetically, to take the aspirins, and thus put myself into the
identical situation as the individuals in my sample.  Conditionally on
$\mu_1$, I would further regard my own outcome as independent of those
in the sample.  We shall not here be concerned with issues of sampling
variability in finite datasets.  So we consider the case that the
treatment group ${\cal T}$ is very large.  Then I can essentially
identify $\mu_1$ as the observed sample mean $\widehat\mu_1$, and so
take my updated $P_1$ to be
$\norm(\widehat\mu_1,\sigma^2)$.\footnote{This is of course specific
  to our assumed normal model \eqref{normal}, and in any case assumes
  $\sigma^2$ known.  For other models we might plug in the maximum
  likelihood estimate (or any other consistent estimate).  Still more
  generally, we could estimate the distribution $P_1$ nonparametricly,
  \eg\ using the empirical distribution of the sample data.}  For any
non-dogmatic prior, this will be a close approximation to my Bayesian
``posterior predictive distribution'' for $Y$, given the data $D$
(conditionally on my taking the aspirins), and also has a clear
frequentist justification.

The above was relevant to my hypothetical distribution $P_1$, were I
to take the aspirins.  But of course an entirely parallel argument can
be applied to estimating $P_0$, the distribution of my response $Y$
were I not to take the aspirins.  I would gather another large group
(the ``control group'', ${\cal C}$) of individuals similar to myself,
with headaches similar to my own, but this time withhold the aspirins
from them.  I would then use the empirically estimated distribution of
the response in this group as my own distribution $P_0$.

Let ${\cal D} = {\cal T}\cup {\cal C}$ be the set of ``data
individuals''.  Using the responses of ${\cal D}$, I have been able to
populate my own decision problem with the relevant hypothetical
distributions, $P_1$ and $P_0$.  I can now solve it, and so choose the
optimal decision for me.

\section{Exchangeability}
\label{sec:exchpred}
Here we delve more deeply into the justification for some of the
intuitive arguments made above (and below). 

In \secref{data} above, in the context first of estimating my
hypothetical distribution $P_1$, we talked of constructing, as the
treatment group ${\cal T}$,
\begin{quote}
  ``a group of individuals whom I can regard, in an intuitive sense,
  as similar to myself, with headaches similar to my own''.
\end{quote}
The identical requirement was imposed on the control group ${\cal C}$.
The formal definition and theory of {\em exchangeability\/}
\cite{definetti:37,definetti:75} seeks to put this intuitive
conception on a more formal footing.

We consider a collection ${\cal I}$ of individuals, on each of which
we can measure a number of generic variables.  One such is the generic
{\em response variable\/} $Y$, having a specific instance, $Y_i$, for
individual $i$---that is, $Y_i$ denotes the response of individual
$i$.  We suppose all individuals considered are included in
${\cal I}$.  In particular, ${\cal T}\subseteq {\cal I}$,
${\cal C}\subseteq {\cal I}$, and I myself am included in ${\cal I}$,
with label $0$, say.

\subsection{Post-treatment exchangeability}
\label{sec:postt}
What we are essentially requiring of ${\cal T}$, in the description
quoted above, is twofold:
\begin{enumerate}
\item \label{it:exi} My joint personalist distribution for the
  responses in the treatment group, \ie\ the ordered set
  $(Y_i:i\in{\cal T})$, is exchangeable---that is to say, I regard the
  re-ordered set $(Y_{\rho(i)}:i\in{\cal T})$ as having the same joint
  distribution as $(Y_i:i\in{\cal T})$, where $\rho$ is an arbitrary
  permutation (re-ordering) of the treated individuals.
\item \label{it:ex0} {\em If\/}, moreover, I were to take the
  aspirins, then the above exchangeability would extend to the set
  ${\cal T}^+ := {\cal T}\cup\{0\}$, in which I too am included.
\end{enumerate}
Parallel exchangeability assumptions would be made for the control
group ${\cal C}$, from whom the aspirin is withheld: in \itref{exi}
and \itref{ex0} we just replace ``treatment'' by ``control'',
${\cal T}$ by ${\cal C}$ (and ${\cal T}^+$ by ${\cal C}^+$), and
``were to take'' by ``were not to take''.  We shall denote these
variant versions by \itref{exi}$'$ and \itref{ex0}$'$.

Since the above exchangeability assumptions relate to the responses of
individuals after they have (actually or hypothetically) received
treatment, we refer to them as {\em post-treatment exchangeabiity\/}.

Applying de Finetti's representation theorem \cite{definetti:37} to
\itref{exi}, I can regard the responses $(Y_i:i\in{\cal T})$ in the
treatment group as independently and identically distributed, from
some unknown distribution.\footnote{Strictly, this result requires
  that I could, at least in principle, extend the size of the
  treatment group indefinitely, while retaining exchangeability.}
This distribution can then be consistently estimated from the response
data in the treatment group.  On account of \itref{ex0}, this same
distribution would govern my own response, $Y_0$, were I to take the
aspirins.  It can thus be identified with my own hypothetical
distribution $P_1$.  Taken together, \itref{exi} and \itref{ex0} thus
justify my estimating $P_1$ from the treatment group data, and using
this to populate the treatment branch of my decision tree.%
\footnote{More correctly, I should take account of all the data, in
  both groups.  I regard the associated ordered outcomes as {\em
    partially exchangeable\/} \cite{def:partial}, with a joint
  distribution unchanged under arbitrary permutations of individuals
  within each group.  Such a joint distribution can be regarded as
  generated by independent sampling, from a distribution $P_1$ for an
  individual in the treatment group, or $P_0$ for an individual in the
  treatment group, where I have a joint distribution for the pair
  $(P_0,P_1)$.  There could be dependence between $P_0$ and $P_1$ in
  this joint distribution (for example, they might contain common
  parameters)---in which case data on responses in the control group
  could also carry information about the treatment response
  distribution $P_1$.  Nevertheless, if the treatment data are
  sufficiently extensive I can still estimate $P_1$ consistently by
  ignoring the control data, and so use just the treatment data to
  populate the treatment arm of my decision problem.}  Similarly,
using \itref{exi}$'$ and \itref{ex0}$'$, I can use the data from the
control group to populate my own control branch.  My decision problem
can now be solved.\footnote{\label{fn:coherence} The above argument
  glosses over a small philosophical problem: Can I justify equating
  the {\em hypothetical\/} uncertainty about the response $Y$, {\em
    were an individual to take\/} the aspirins, with the {\em
    realised\/} uncertainty about (still unobserved) $Y$, once {\em
    that individual is known to have taken\/} the aspirins?\ (and,
  importantly, nothing else new is known).  The former is what is
  relevant to my decision problem, but the data on the treated
  individuals are informative about the latter.  We have implicitly
  assumed that these uncertainties are the same, and so governed by
  the same distribution.  We may term this property {\em temporal
    coherence\/}.  At a fully general level, any conditional
  probability $P(A \cd B)$ has two different interpretations: the
  (hypothetical) probability it would be appropriate to assign to $A$,
  were $B$ (and only $B$) to become known, and the (realised)
  probability it is appropriate to assign to $A$, after $B$ (but
  nothing else new) has become known.  Although it seems innocuous to
  equate these two, a full philosophical justification is not entirely
  trivial (see for example \textcite{skyrms:87}).  Nevertheless there
  is no serious dissent from this position, and we shall adopt it
  without further ado.}

\paragraph{Some comments}
\renewcommand{\theenumi}{(\arabic{enumi})}
\begin{enumerate}
\item Whether or not the exchangeability assumption \itref{exi} can be
  regarded as reasonable will be highly dependent on the background
  information informing my personal probability assessments.  For
  example, I might know, or suspect, that evening headaches tend to be
  more long-lasting than morning headaches.  If I were also to know
  which of the headaches in ${\cal T}$ were evening, and which
  morning, headaches, then I would not wish to impose exchangeability.
  I might know that individual 1 had a morning headache, and
  individual 2 an evening headache.  Then it would not be reasonable
  for me to give the re-ordered pair $(Y_2,Y_1)$ the same joint
  distribution as $(Y_1,Y_2)$---in particular, my marginal
  distribution for $Y_2$ would likely not be the same as that for
  $Y_1$.  However, in the absence of specific knowledge about who had
  what type of headache---``equality of ignorance''---the
  exchangeability condition \itref{exi} could still be reasonable.

\item There may be more than one way of embedding my own response,
  $Y_0$, into a set of exchangeable variables.  For example, instead
  of considering other individuals, I could consider all my own
  previous headache episodes.  (In the language of experimental
  design, the experimental unit---the headache episode---is nested
  within the individual).  Then I might use the estimated distribution
  of my response, among those past headache episodes of my own that I
  had treated with aspirin, to populate the treatment branch of my
  current decision problem.  This might well yield a different (and
  arguably more relevant) distribution from that based on observing
  headaches in other treated individuals.  In this sense there is no
  ``objective'' distribution $P_1$ waiting to be uncovered: $P_1$ is
  itself an artifact of the overall structure in which I have embedded
  my problem, and the data that I have observed.

\item Exchangeability must also be considered in relation to my own
  current circumstances.  The exchangeability judgment \itref{exi} may
  not be extendible as required by \itref{ex0} if, for example, my
  current headache is particularly severe.  To reinstate
  exchangeability I might then need to restrict attention to those
  headache episodes (in other individuals, or in my own past) that had
  a similar level of severity to mine.  Alternatively I might build a
  more complex statistical model, allowing for different degrees of
  severity, and use this to extrapolate from the observed data
  to my own case.

\item \label{it:vaccine} We do not in principle exclude complicated
  scenarios such as ``herd immunity'' in vaccination programmes, where
  an individual's response might be affected in part by the treatments
  that are assigned to other individuals.  Assuming appropriate
  symmetry in (my knowledge of) the interactions between individuals,
  this need not negate the appropriateness of the exchangeability
  assumptions, and hence the validity of the above analysis---though
  in this case it would be difficult to give the underlying
  distributions $P_0$ and $P_1$, conjured into existence by
  de~Finetti's theorem, a clear frequentist interpretation.  However,
  in such a problem it would usually be more appropriate to enter into
  a more detailed modelling of the situation.
\end{enumerate}

Exchangeability, while an enormously simplifying assumption, is in any
case inessential for the more general analysis of \secref{data}: at
that level of generality, I have to assess my conditional distribution
for my own response $Y_0$ (in the hypothetical situation that I decide
to take the aspirins), given whatever data $D$ I have available.  But
modelling and implementing an unstructured prediction problem can be
extremely challenging, as well as hard to justify as genuinely
empirically based, unless we can make good arguments.  When
appropriate, judgments of exchangeability constitute an excellent
basis for such arguments.

\subsection{Pre-treatment exchangeability}
\label{sec:extexch}
The post-treatment exchangeability conditions \itref{exi} and
\itref{ex0}, and \itref{exi}$'$ and \itref{ex0}$'$, are what is needed
to let me populate my decision tree with the requisite hypothetical
distributions and so solve my decision problem.

Here we consider another interpretation of the expression ``a group of
individuals whom I can regard, in an intuitive sense, as similar to
myself, with headaches similar to my own''.  This description has been
supposed equally applicable to the treatment group ${\cal T}$ and the
control group ${\cal C}$.  But this being the case, then---applying
Euclid's first axiom, ``Things which are equal to the same thing are
also equal to one another''---the two groups, ${\cal T}$ and
${\cal C}$ (and their headaches), both being similar to me, must be
regarded (again in an intuitive sense) as similar to each other---I
must be ``comparing like with like''.  But how are we to formalise
this intuitive property of the two groups being similar to each other?
We cannot simply impose full exchangeability of all the responses
$(Y_i: i\in{\cal D})$, since I typically would not expect the
responses of the treated individuals to be exchangeable with those of
the untreated individuals.

One way of formalising this intuition is to consider all the
individuals in the treatment and control groups {\em before\/} they
were given their treatments.  Just as I myself can hypothesise taking
either one of the treatments, and in either case consider my
hypothetical distribution for my ensuing response $Y_0$, so can I
hypothesise various ways in which treatments might be applied to all
the individuals in ${\cal I}$.

Let the binary decision variable $\check T_i$ indicate which treatment
is hypothesised to be applied to individual $i$.

We first introduce the following {\em Stable Unit-Treatment
  Distribution Assumption\/}:
\begin{cond}[SUTDA]
  \label{cond:sutda}
  For any $A \subseteq {\cal I}$, the joint distribution of
  $Y_A := (Y_i:Y\in A)$, given hypothesised treatment applications
  $(\check T_i = t_i: i\in{\cal I})$, depends only on $(t_i:i\in A)$.
  In particular, for any individual $i$, the distribution of the
  associated response $Y_i$ depends only on the treatment $t_i$
  applied to that individual.
\end{cond}
As discussed further in \secref{po} below, SUTDA bears a close
resemblance to the Stable Unit-Treatment {\em Value\/} Assumption,
SUTVA, typically made in the Rubin potential outcome framework;
but---as reflected in it name---differs in the important respect of
referring to distributions, rather than values, of variables.  It is a
weaker requirement than SUTVA, but is as powerful as required for
applications.

Note that SUDTA is a genuinely restrictive hypothesis, now excluding
cases such as the vaccine example \itref{vaccine} of
\secref{exchpred}.  However, we will henceforth assume it holds.

In more complex problems there will be other generic variables of
interest besides $Y$---we term these (including the response variable
$Y$) {\em domain variables\/}.  Then we extend SUTDA to apply to all
domain variables, considered jointly.  An important special case is
that of a domain variable $X$ such that the joint distribution of
$(X_i:i\in{\cal I})$, given $\check T_i = t_i$ ($i\in{\cal I})$, does
not depend in any way on the applied treatments $(t_i)$.  Such a
variable, unaffected by the treatment, is a {\em concomitant\/}.  It
will typically be reasonable to treat as a concomitant any variable
whose value is fully determined before the treatment decision has to
be made: such a variable is termed a {\em covariate\/}.  Other
concomitants might include, for example, the weather after the
treatment decision is made.

Let $V$ be a (possibly multivariate) generic variable.  I now
hypothesise giving {\em all\/} individuals in ${\cal I}$ (including
myself) the aspirins, and consider my corresponding hypothetical joint
distribution for the individual instances $(V_i: i\in{\cal I})$.  It
would often be reasonable to impose full exchangeability on this joint
distribution, since all members of ${\cal I}$ would have been treated
the same.  A similar assumption can be made for the case that the
aspirins are, hypothetically, withheld from all individuals.  We term
the conjunction of these two hypothetical exchangeability properties
{\em pre-treatment exchangeability\/} (of $V$, over ${\cal I}$).

When I can assume this, then under uniform application of aspirin, by
de~Finetti's theorem I can regard all the $(V_i)$ as independent and
identically distributed from some distribution $Q_1$ (initially
unknown, but estimable from data on uniformly treated individuals).
Similarly, under hypothetical uniform withholding of aspirin, there
will be an associated distribution $Q_0$.  When moreover SUTDA
applies, we can conclude that, under any hypothesised application of
treatments, $\check T_i = t_i$ ($i\in{\cal I})$, we can regard the
$V_i$ as independent, with $V_i \sim Q_{t_i}$.  We can thus confine
attention to the generic variable $V$, with distribution $Q_1$ [resp.,
$Q_0$] under applied treatment $\check T = 1$ [resp., $\check T = 0$].

Pre-treatment exchangeability appears, superficially, to be a stronger
requirement than post-treatment exchangeability: one could argue that
(taken together with SUDTA) pre-treatment exchangeability implies the
post-treatment exchangeability properties \itref{exi}, \itref{ex0},
\itref{exi}$'$ and \itref{ex0}$'$, which would permit me to populate
both the treatment and the control branches of my decision tree, and
so solve my decision problem.  This would indeed be so if the
individuals forming the treatment and control groups were identified
in advance, and then subjected to their appointed interventions.
However, it need not be so in the more general case that we do not
have direct control over who gets which treatment.  Much of the rest
of this paper is concerned with addressing such cases, considering
further conditions---in particular, {\em ignorability\/} of the
treatment assignment process, as described on \secref{ignore}
below---that allow us to bridge the gap between pre- and
post-treatment exchangeability.

\subsection{Internal and external validity}
\label{sec:valid}
We might be willing to accept pre-treatment exchangeability, but only
over the restricted set ${\cal D}$ of data individuals, excluding
myself---a property we term {\em internal exchangeability\/}.  When I
can extend this to pre-treatment exchangeability over the set
${\cal D}^+ := {\cal D}\cup\{0\}$, including myself, we have {\em
  external exchangeability\/}.  In the latter case there is at least a
chance that the data ${\cal D}$ could help me solve my decision
problem---the case of {\em external validity\/} of the
data.\footnote{This is admittedly a very strict interpretation of
  ``external validity''.  More generally, it might be considered
  enough to be able to transfer information about, say, \ace, from the
  data to me.  This would typically require further modelling
  assumptions, such as described in \S8.1 of \textcite{apd:cinfer}.}
However, when we have internal but not external exchangeability, this
conclusion could, at best, be regarded as holding for a new, possibly
fictitious, individual who could be regarded as exchangeable with
those in the data---this is the case of {\em internal validity\/}.  In
practice that can be problematic.  For example, a clinical trial might
have tightly restricted enrolment criteria, perhaps restricting entry
to, say, men aged between 25 and 49 with mild headache.  Even if the
study has good internal validity, and shows a clear advantage to
aspirin for curing the headache, it is not clear that this message
would be relevant to a 20-year old female with a severe headache.  And
indeed, it may not be.  Arguments for external validity will generally
be somewhat speculative, and not easy to support with empirical
evidence.

\section{Treatment assignment and application}
\label{sec:assig}

In \secref{data} we talked in terms of identifying, quite separately,
two groups of individuals, in each case supposed suitably exchangeable
(both internally, and with me), where one of the groups is made to
take, and the other made not to take, the aspirins.  But typically the
process is reversed: a single group of individuals, ${\cal D}$ say, is
gathered, some of whom are then chosen to receive active
treatment---thus forming the treatment group ${\cal T}$---with the
remainder forming the control group ${\cal C}$.

In this case the treatment process has three stages:
\begin{enumerate}
\item \label{it:First} First, the data subjects ${\cal D}$ are
  identified by some process.
\item \label{it:Secondly} Secondly, certain individuals in ${\cal D}$
  are somehow selected to receive active treatment, the others
  receiving control.\footnote{In reality stages~\itref{First} and
    \itref{Secondly} may be combined, as in sequential accrual and
    randomisation in a clinical trial.}
\item \label{it:Finally} Finally, the assigned treatments are actually
  administered.
\end{enumerate}

The operation of stage~\itref{First} will be crucial for issues of
external validity---if the data are to be at all relevant for me, I
would want the data subjects to be somehow like me.  However from this
point on we shall na\"ively assume this has been done
satisfactorily---alternatively, we consider ``me'' to be a possibly
fictitious individual who can be regarded as similar to those in the
data.  We shall thus consider all data subjects, together with myself,
as pre-treatment exchangeable.  I can then confine attention to the
joint distributions $P_1$ and $P_0$ over generic variables, under
hypothesised application of treatment 1 or 0, respectively.

For further analysis it will prove important to keep
stages~\itref{Secondly} and \itref{Finally} clearly distinct in the
notation and the analysis.

We denote by $T^*$ the generic {\em intention to treat\/} (ITT)
variable, generated at stage~\itref{Secondly}, where $T^*_i=1$ if
individual $i\in{\cal D}$ is selected to receive active treatment, and
$T^*_i=0$ if not (this is relevant only for the external data
${\cal D}$: my own value $T^*_0$ need not be defined).  Note that
$T^*$ is a stochastic variable.  In contrast, we also consider (at
stage~\itref{Finally}) the binary non-stochastic generic
decision/regime variable $\check T$: $\check T_i = 1$ [resp.,
$\check T_i = 0$] denotes the (typically hypothetical) situation in
which individual $i$ is made to take [resp., prevented from taking]
the aspirins.  My own decision variable $\check T_0$ (though not yet
its value) is well-defined---indeed, is the very focus of my decision
problem.

Note that when below we talk of ``domain variables'' we will exclude
$T^*$ and $\check T$ from this description.

If all goes to plan, for $i\in{\cal D}$ we shall have
$\check T_i = T^*_i$.  However, there is no bar to considering,
between stages~\itref{Secondly} and \itref{Finally}, what might happen
to an individual, fingered to receive the treatment (so having
$T^*_i = 1$), who, contrary to plan, is prevented from taking it (so
that $\check T_i = 0$)%
\footnote{This apparently oxymoronic combination has some superficial
  resemblance to counterfactual reasoning (see \eg\
  \textcite{morgan_winship_2014}), which has often been
  considered---quite wrongly in my view \cite{apd:cinfer}---as
  essential for modelling and manipulating causal relations.
  Counterfactual analysis considers the individual after he has been
  treated (so with known $\check T_i = 1$, and possibly known response
  $Y_i$), and asks what might have happened if---in a fictional
  scenario {\em counter to known facts\/}---he had not been treated
  (\ie, under the counterfactual application $\check T_i = 0$).  In
  spite of some parallels, there are important differences between our
  {\em hypothetical\/} approach and this {\em counterfactual\/}
  approach.  By considering a time before any treatment has yet been
  applied, and making the distinction between intention to treat,
  $T^*_i$, and a hypothesised treatment application, $\check T_i$, we
  sidestep many of the philosophical and methodological difficulties
  associated with counterfactual reasoning.  In particular, in our
  formulation we avoid counterfactual theory's problematic and
  entirely unnecessary conversion of the single response variable $Y$
  into two separate but co-existing ``potential responses'', $Y(0)$
  and $Y(1)$.}---indeed, we have already made use of such
considerations when introducing pre-treatment exchangeability.  So we
can meaningfully consider a quantity such as
$\E(Y \cd T^* =1, \check T = 0)$.  And indeed it will prove useful to
divorce treatment {\em selection\/} (intention to treat), $T^*$, from
(actual or hypothetical) treatment {\em application\/}, $\check T$, in
this way.  For example, what is usually termed {\em the effect of
  treatment on the treated\/} \cite{heckman:socpol} is more properly
expressed as {\em the effect of treatment on those selected for
  treatment\/}, which can be represented formally as
$\E(Y \cd T^* =1, \check T = 1) - \E(Y \cd T^* =1, \check T = 0)$
\cite{sgg/apd:ett}.

Since the selection process is made before any application of
treatment, it is appropriate to treat $T^*$ as a covariate, with the
same distribution in both regimes.

We suppose internal exchangeability, in the sense of \secref{extexch}
above, for the pair of generic variables $(T^*,Y)$.  In particular we
shall have internal exchangeability, marginally, for the response
variable $Y$---and, to make a link to my own decision problem, we
assume this extends to external exchangeability for $Y$ (we here omit
$T^*$, since that might not even be meaningfully defined for me).
However, even internal exchangeability for $Y$ need no longer hold
after we condition on the selection variable $T^*$---this is the
problem of {\em confounding\/}.  For example, suppose that, although I
myself don't know which of the headaches in ${\cal D}$ are the
(generally milder) morning and which the (generally more long-lasting)
evening headaches, I know or suspect that the aspirins have been
assigned preferentially to the evening headaches.  Then simply knowing
that an individual was selected (perhaps self-selected) to take the
aspirins ($T^* = 1)$ will suggest that his headache is more likely to
be an evening headache, and so change my uncertainty about his
response $Y$ (whichever treatment were to be taken). I might thus
expect, \eg,
$\E(Y \cd T^* = 1, \check T = t) > \E(Y \cd T^* = 0, \check T = t)$,
both for $t=0$ and for $t=1$.
In such a case, even under a hypothetical uniform {\em application\/}
of treatment, I could not reasonably assume exchangeability between
the group {\em selected\/} to receive active treatment (and thus more
likely to have long-lasting evening headaches) and the group selected
for control (who are more likely to have short-lived morning
headaches).  Post-treatment exchangeability is absent, since I would
no longer be comparing like with like.  This in turn renders external
validity impossible, since (even under uniform treatment) I could not
now be exchangeable, simultaneously, both with those selected for
treatment and with the those selected for control, since these are not
even exchangeable with each other.  This means I can no longer use the
data (at any rate, not in the simple way considered thus far) to fully
populate, and thus solve, my decision problem.

As explained in \secref{extexch}, assuming internal exchangeability
and SUTDA, I can just consider the joint distribution, $Q_t$, for the
bivariate generic variable $(T^*,Y)$, given $\check T=t$.  Since we
are treating the selection indicator $T^*$ as a covariate, its
marginal distribution will not depend on which hypothetical treatment
application is under consideration, and so will be the same under both
$Q_1$ and $Q_0$.  We can express this as the extended independence
property
\begin{equation}
  \label{eq:indst}
  \indo {T^*} {\check T},
\end{equation}
which says that the (stochastic) selection variable $T^*$ is
independent of the (non-stochastic) decision variable $\check T$.  We
denote this common distribution of $T^*$ in both regimes by $P^*$.

By the assumed external exchangeability of $Y$, the marginal
distribution of $Y$ under $Q_t$ is my desired hypothetical response
distribution, $P_t$.  However, in the absence of actual uniform
application of treatment $t$ to the data subjects (which in any case
is not simultaneously possible for both values of $t$), I may not be
able to estimate this marginal distribution.  In the data, the
treatment will have been applied in accordance with the selection
process, so that $\check T=T^*$, and the only observations I will have
under regime $\check T = 1$ (say) are those for which $T^*=1$.  From
these I can estimate the {\em conditional\/} distribution of $Y$,
given $T^*=1$. under $Q_1$---but this need not agree with the desired
{\em marginal\/} distribution $P_1$ of $Y$ under $Q_1$.\footnote{Note
  that I can not make use of the {\em conditional\/} distributions of
  $Y$ given $T^*$.  Typically I myself do not even have, let alone
  observe, a value for $T^*_0$.  And even in the special case that my
  value $T^*_0$ is well-defined, and I can assume external validity
  for the pair $(T^*,Y)$, I can at best estimate one of the two
  required conditional distributions.  Thus if I have been fingered
  for treatment, $T^*_0 = 1$, I would need the conditional
  distribution of $Y$ given $T^* = 1$ under $Q_t$, for both $t=0$ and
  $t=1$.  But for $t=0$ this will not be estimable from the data,
  since there were no data subjects who were fingered for treatment
  but did not receive it.}

\subsection{Ignorability}
\label{sec:ignore}

The above complication will be avoided when I judge that, both for
$t=1$ and for $t=0$, if I intervene to {\em apply\/} treatment
$\check T = t$ on an individual, the ensuing response $Y$ will not
depend on the {\em intended\/} treatment $T^*$ for that individual;
\ie, we have independence of $Y$ and $T^*$ under each $Q_t$.  This can
be expressed as the extended conditional independence property
\begin{equation}
  \label{eq:indys_t}
  \ind Y {T^*} {\check T}.
\end{equation}
When \eqref{indys_t} can be assumed to hold, we term the assignment
process {\em ignorable\/}.  In that case, my desired distribution for
$Y$, under hypothesised active treatment assigment ${\check T}=1$, is
the same as the conditional distribution of $Y$ given $T^*=1$ under
${\check T}=1$---which is estimable as the distribution of $Y$ in the
treatment group data.  Likewise, my distribution for $Y$ under
hypothesised control treatment is estimable from the data in the
control group.

The ignorability condition \eqref{indys_t} requires that the
distribution of an individual's response $Y$, under either applied
treatment, will not be affected by knowledge of which treatment the
individual had been fingered to receive---a property that would likely
fail if, for example, treatment selection $T^*$ was related to the
overall health of the patient.  Note that ignorability is not testable
from the available data, in which ${\check T}=T^*$.  For we would need
to test, in particular, that, for an individual taking actual
treatment $\check T = 1$, the distribution of $Y$ given $T^* = 1$ is
the same as that given $T^*=0$.  But for all such individuals in the
data we never have $T^*=0$, so can not make the comparison.  Hence any
assumption of ignorability can only be justified on the basis of
non-empirical considerations.  The most common, and most convincing,
basis for such a justification is when I know that the treatment
assignment process has been carried out by a randomising device, which
can be assumed to be entirely unrelated to anything that could affect
the responses; but I might be able to make a non-empirical arguments
for ignorability in some other contexts also.  Indeed, it would be
rash simply to assume ignorability without having a good argument to
back it up.

\section{The idle regime}
\label{sec:idle}
As a useful extension of the above analysis, we expand the range of
the regime indicator $\check T$ to encompass a further value, which we
term ``idle'', and denote by $\idle$---this indicates the
observational regime, where treatments are applied according to plan.
(This is relevant only for the data individuals, in ${\cal D}$: I
myself care only about the two interventions I am considering).  We
denote this 3-valued regime indicator by $F_T$.

Now $T^*$ is determined prior to any (actual or hypothetical)
treatment application, and behaves as a covariate.  It is thus
reasonable to assume that, under the observational regime
$F_T = \idle$, $T^*$ retains its fixed covariate distribution $P^*$.
And since this distribution is then the same in all three regimes, we
thus have
\begin{equation}
  \label{eq:tstarsame}
  \indo {T^*} {F_T}.
\end{equation}
This extends \eqref{indst} to include also the idle regime.

We now introduce a new stochastic domain variable $T$, representing
the treatment actually applied when following the relevant regime.
This is fully determined by the pair $(F_T, T^*)$, as follows:
\begin{defn}{\bf (Applied Treatment, $T$)}
  \label{def:appl}
  \renewcommand{\theenumi}{(\roman{enumi})}
  \begin{enumerate}
  \item \label{it:appl1}
    If $F_T = 0$ or $1$, then $T=F_T$
  \item \label{it:appl2}
    If $F_T=\idle$, then $T=T^*$.
  \end{enumerate}
  In particular, $T\sim P^*$ under $F_T=\idle$, while $T$ has a
  degenerate distribution at $t$ under $F_T= t$ ($t=0$ or $1$).
\end{defn}

In each of the three regimes we can observe both $T$ and $Y$.  In the
observational regime ($F_T=\idle$) we can also recover $T^*$, since
$T^*=T$.  However, $T^*$ is typically unobservable in the
interventional regimes, and may not even be defined for myself, the
case of interest.

To complete the distributional specification of the idle regime we
argue as follows.  Under $F_T = \idle$, the information conveyed by
learning $T=t$ is twofold, conveying both that the individual was
initially fingered to receive treatment $t$, \ie\ $T^* = t$, and that
treatment $t$ was indeed applied.  Hence for any domain variable $V$,
the conditional distribution of $V$ given $T=t$ (equivalently, given
$T^* = t$), under $F_T = \idle$, should be the same as that of $V$
given $T^*=t$, under the (real or hypothetical) applied treatment
$F_T= t$.  We express this property formally as:
\begin{defn}{\bf (Distributional Consistency)}
  \label{def:distcons}
  For any domain variable, or set of domain variables,
  $V$,\footnote{Note that \eqref{idledist} holds, automatically, on
    taking for $V$ the constructed variable $T$, since on each side
    the conditional distribution for $V\equiv T$ is the one-point
    distribution on the value $t$.}
  \begin{equation}
    \label{eq:idledist}
    V \cd (T=t, F_T = \idle)\,\, [= V \cd (T^*=t, F_T = \idle)] \,\approx\, V \cd (T^*=t, F_T = t)\qquad(t=0,1),
  \end{equation}
  where $\approx$ denotes ``has the same distribution as''.
\end{defn}
Distributional consistency is the fundamental property linking the
observational and interventional regimes.  It is our, weaker, version
of the (functional) consistency property usually invoked in the
potential outcome approach to causality---see \secref{po} below.  In
the sequel we shall take \eqref{idledist} for granted.





\begin{lemma}
  \label{lem:basicdag}
For any domain variable $V$,
  \begin{equation}
    \label{eq:basicdag}
    \ind V {F_T} {(T, T^*)}.
  \end{equation}
\end{lemma}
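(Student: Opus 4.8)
The plan is to read the extended conditional independence (ECI) assertion \eqref{basicdag} operationally: it says that for each fixed value $(t,t^*)$ of the conditioning pair $(T,T^*)$, the conditional law of $V$ given $(T=t,T^*=t^*)$ is the same in every regime $F_T$ for which that conditioning event is possible. Accordingly, I would first tabulate, for each of the four value-combinations $(t,t^*)\in\{0,1\}^2$, which regimes $F_T\in\{0,1,\idle\}$ are compatible with it. The deterministic construction of the applied treatment in \defref{appl} does all the work here: under $F_T=t$ the value $T=t$ is forced (while $T^*$ keeps its distribution $P^*$, by \eqref{tstarsame}), whereas under $F_T=\idle$ we have $T=T^*$, so that only the ``diagonal'' pairs $(1,1)$ and $(0,0)$ can arise in the idle regime.

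First I would dispatch the two off-diagonal pairs. For $(t,t^*)=(1,0)$, the regime $F_T=0$ forces $T=0$ and the idle regime forces $T=T^*=0$, both incompatible with $t=1$; hence only $F_T=1$ survives. Symmetrically, $(t,t^*)=(0,1)$ is compatible only with $F_T=0$. In each of these cases a single regime assigns positive probability to the conditioning event, so the invariance demanded by \eqref{basicdag} holds vacuously, with nothing to prove.

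The content of the lemma sits in the two diagonal pairs. For $(t,t^*)=(1,1)$ the compatible regimes are $F_T=1$ and $F_T=\idle$, and I must show the conditional laws of $V$ coincide. Under $F_T=\idle$ the constraint $T=T^*$ makes conditioning on $T=1$ the same as conditioning on $T^*=1$, so this law is $V\cd(T^*=1,F_T=\idle)$; under $F_T=1$ the event $T=1$ is automatic, so that law is $V\cd(T^*=1,F_T=1)$. These are precisely the two sides of distributional consistency \eqref{idledist} taken at $t=1$, which asserts their equality. The pair $(0,0)$ is treated identically with $t=0$. Combining the four cases then delivers \eqref{basicdag}.

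I do not anticipate a genuine obstacle so much as one point demanding care: because $F_T$ is a non-stochastic regime indicator, \eqref{basicdag} must be read as invariance of a conditional distribution across regimes, not as ordinary probabilistic independence. Once that reading is fixed, the whole argument reduces to a four-way case split in which the off-diagonal configurations are trivialised exactly by the probability-zero constraints that the deterministic definition of $T$ enforces, while the diagonal configurations collapse directly onto distributional consistency. The one thing to watch is that the conditioning events are comparably populated across the regimes being compared, which \eqref{tstarsame} guarantees.
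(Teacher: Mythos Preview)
Your proposal is correct and follows essentially the same route as the paper's own proof: both arguments rest on the deterministic construction of $T$ from $(F_T,T^*)$ in \defref{appl} to identify which conditioning events $(T=t,T^*=t^*)$ have positive probability under which regimes, invoke distributional consistency \eqref{idledist} for the non-trivial diagonal cases, and appeal to the freedom to define conditional distributions on probability-zero events elsewhere. The only difference is organizational: the paper indexes first by $t$ and explicitly names a candidate common conditional distribution $\Pi_{t,t^*}$ (taken from the interventional regime $F_T=t$) before verifying it works across all three regimes, whereas you index by the four pairs $(t,t^*)$ and argue directly that at most two regimes are ever in play, with equality supplied by \eqref{idledist} on the diagonal.
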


\begin{proof}
  We have to show that, for $t, t^*\in \{0,1\}$, it is possible to
  define a conditional distribution for $V$, given $T=t, T^*=t^*$,
  that applies in all three regimes.

  Let $\Pi_{t,t^*}$ denote the distribution of $V$ given $T^*=t^*$ in
  the interventional regime $F_T = t$.  This is well-defined in the
  usual case that the event $T^*=t^*$ has positive probability (this
  probability being the same in all regimes)---if not, we make an
  arbitrary choice for this distribution.

  Consider first the case $t=1$.

  \begin{enumerate}
  \item Since $T$ is non-random with value $1$ in regime $F_t=1$,
    $\Pi_{1,t^*}$ is also, trivially, the distribution of $V$ given
    $T=1, T^*=t^*$ in regime $F_T=1$.
  \item Under regime $F_T = 0$, the event $T=1,T^* = t^*$ has
    probability $0$, so we are free to define the distribution of $V$
    conditional on this event arbitrarily; in particular we can take
    it to be $\Pi_{1,t^*}$.
  \item Under regime $F_T=\idle$, the event $T=1, T^*=0$ has
    probability $0$, so we are free to define the distribution of $V$
    conditional on this event as $\Pi_{1,0}$.
  \item It remains to show that the distribution of $V$ given
    $T=T^* = 1$ in regime $F_T=\idle$ is $\Pi_{1,1}$.  Since, under
    $F_T = \idle$, $T\equiv T^*$, we need only condition on $T=1$.
    The result now follows from distributional consistency
    \eqref{idledist}.
  \end{enumerate}

  Since a parallel argument holds for the case $t=0$, we have shown
  that $\Pi_{t,t^*}$ serves as the conditional distribution for $V$
  given $(T=t, T^* = t^*)$ in all three regimes, and \eqref{basicdag}
  is thus proved.
\end{proof}

\subsection{Graphical representation}
\label{sec:graphrep}
The properties \eqref{tstarsame} and \eqref{basicdag} are represented
graphically (using $d$-separation) by the absence of arrows from $F_T$
to $T^*$ and to $Y$, respectively, in the ITT (intention to treat) DAG
of \figref{basic}, where again, a round node represents a stochastic
variable, and a square node a non-stochastic regime indicator.  In
addition, we have included further optional annotations:
\begin{itemize}
\item The outline of $T^*$ is dotted to indicate that $T^*$ is not
  directly observed
\item The heavy outline of $T$ indicates that the value of $T$ is {\em
    functionally\/} determined by those of its parents $F_T$ and $T^*$
\item The dashed arrow from $T^*$ to $T$ indicates that this arrow can
  be removed (there is then no dependence of $T$ on $T^*$) under
  either of the {\em interventional\/} settings $F_T = 0$ or $1$.
\end{itemize} 

\begin{figure}[htbp]
  \begin{center}
    \resizebox{2in}{!}{\includegraphics{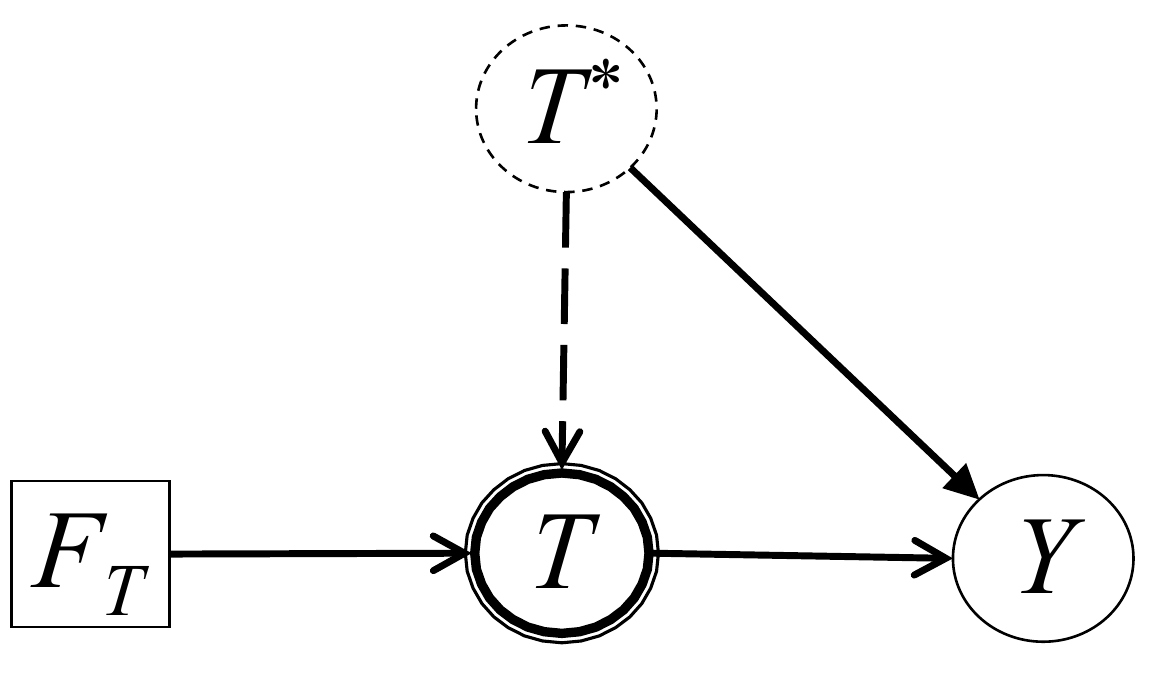}}
    \caption{DAG representing $\indo {T^*} {F_T}$ and
      $\ind Y {F_T} {(T, T^*)}$}
    \label{fig:basic}
  \end{center}
\end{figure}

\begin{rem}
  \label{rem:ok}
  Note that, on further taking into account the functional
  relationship of \defref{appl}, \figref{basic} already incorporates
  the distributional consistency property of \defref{distcons}, for
  $V\equiv Y$.  For we have
  \begin{eqnarray}
    \label{eq:ok1}
    Y \mid (T=t, F_T = \idle) &=&  Y \mid (T=t, T^* = t, F_T = \idle)\\
    \label{eq:ok2}
                              &\approx&  Y \mid (T=t, T^* = t, F_T = t)\\
    \label{eq:ok3}
                              &=&  Y \mid (T^* = t, F_T = t).
  \end{eqnarray}
  Here \eqref{ok1} follows from \itref{appl2} of \defref{appl};
  \eqref{ok2} from \lemref{basicdag} with $V \equiv Y$, \ie\
  $\ind Y {F_T} {(T,T^*)}$, which is represented in \figref{basic};
  and \eqref{ok1} from \itref{appl1} of \defref{appl}.
\end{rem}

Now the ITT variable $T^*$, while crucial to understanding the
relationship between the different regimes, is not itself directly
observable.  If we confine attention to relationships between the
domain variables, \figref{basic} collapses into the essentially
vacuous DAG of \figref{collapse}, expressing no non-trivial
conditional independence properties.
\begin{figure}[htbp]
  \begin{center}
    \resizebox{2in}{!}{\includegraphics{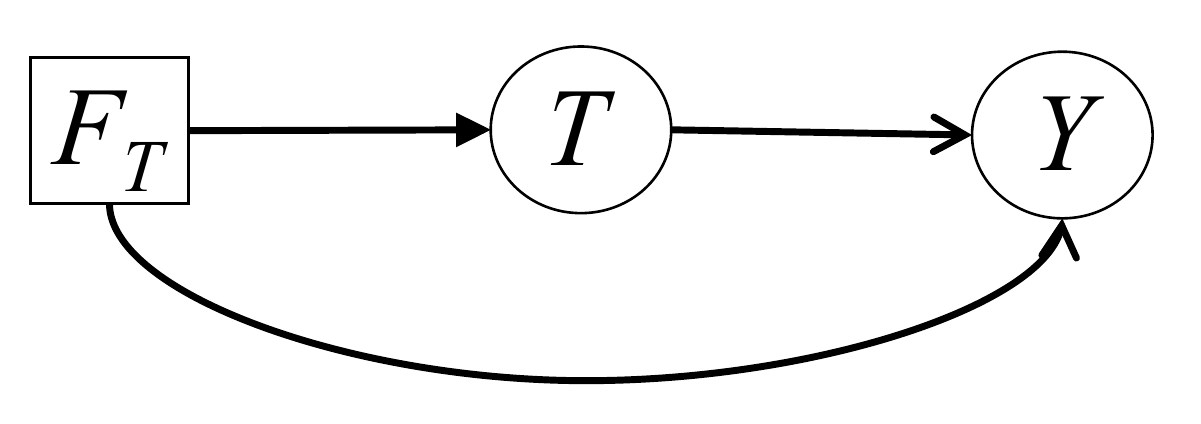}}
    \caption{Vacuous DAG between $F_T$, $T$ and $Y$, ignoring $T^*$}
    \label{fig:collapse}
  \end{center}
\end{figure}
So without further assumptions there is no useful structure of which
to avail ourselves.

\subsection{Ignorability}\label{sec:ignalg}
Suppose now we impose the additional ignorability property
\eqref{indys_t}.  Noting that $\check T=t$ is identical with $F_T=t$,
this is equivalent to
\begin{equation}
  \label{eq:indyft}
  \ind Y {T^*} {F_T=t}\qquad(t=0,1).
\end{equation}
Equivalently, since $T$ is non-random in an interventional regime,
\begin{displaymath}
  \ind Y {T^*} {(T,F_T=t)}\qquad(t=0,1).
\end{displaymath}
Moreover, since in the idle regime, $T^*$ is identical with $T$, so
non-random when $T$ is given, we trivially have
\begin{displaymath}
  \ind Y {T^*} {(T, F_T=\idle)}.
\end{displaymath}
We thus see that ignorability can be expressed as:
\begin{equation}
  \label{eq:ignf}
  \ind Y {T^*} {(T, F_T)}.
\end{equation}

\begin{lemma}
  \label{lem:ignalg}
  If ignorability holds, then
  \begin{equation}
    \label{eq:igndag}  
    \ind Y {F_T} T.
  \end{equation}
\end{lemma}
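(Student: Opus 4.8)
The plan is to derive \eqref{igndag} by combining the two extended conditional independence facts already in hand: \lemref{basicdag}, namely \ind Y {F_T} {(T, T^*)}, and the ignorability hypothesis \eqref{ignf}, namely \ind Y {T^*} {(T, F_T)}. Read together, these are exactly the premises of the \emph{intersection} rule of the conditional-independence calculus, with $Y$ as the left-hand variable, $F_T$ and $T^*$ as the two variables to be merged, and $T$ as the conditioning variable: intersection would yield \ind Y {(F_T, T^*)} T, and a single application of \emph{decomposition} (discarding $T^*$) would then give \ind Y {F_T} T, which is \eqref{igndag}. So at the level of formal symbol-pushing the result looks immediate.

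The hard part is that intersection is \emph{not} one of the rules valid for arbitrary extended conditional independence: it carries a positivity / absolute-continuity side condition, and the present setup violates the naive form of that condition in a pointed way. By \defref{appl}, $T$ is a deterministic function of $(F_T, T^*)$, and in the idle regime $T \equiv T^*$; conditioning on $T$ can therefore pin down $T^*$, which is precisely the degenerate situation in which intersection is known to fail. I would therefore not invoke intersection as a black box, but instead establish \eqref{igndag} directly, reusing the construction behind \lemref{basicdag} and keeping careful track of which events carry positive probability in which regime.

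Concretely, let $\Pi_{t,t^*}$ be the common conditional law of $Y$ given $(T=t, T^*=t^*)$ that the proof of \lemref{basicdag} exhibits in all three regimes. Ignorability \eqref{ignf} says that, within each regime, the law of $Y$ given $T=t$ does not depend further on $T^*$. In the interventional regime $F_T = t$ the value $T = t$ arises for both $T^* = 0$ and $T^* = 1$ (each with positive probability, in the usual case that the covariate distribution $P^*$ charges both values), so ignorability forces $\Pi_{t,0} = \Pi_{t,1}$; call this common distribution $R_t$. It then remains to verify, regime by regime, that the conditional law of $Y$ given $T = t$ can be taken to be $R_t$: in the interventional regime $F_T = t$ this is immediate; in the idle regime, where $T \equiv T^*$, conditioning on $T = t$ forces $T^* = t$, giving $\Pi_{t,t} = R_t$; and in the opposite interventional regime $F_T = 1-t$ the event $T = t$ has probability zero, so one is free to take the otherwise undetermined conditional law to be $R_t$ as well. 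Thus a single version $R_t$ serves in all three regimes, which is exactly the assertion \ind Y {F_T} T. I expect the only genuine work to be this probability-zero bookkeeping — matching the deterministic collapses of \defref{appl} against the three regimes — rather than any substantive new idea, and it is precisely this bookkeeping that a blind appeal to intersection would have swept under the rug.
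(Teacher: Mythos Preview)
Your proposal is correct and takes essentially the same approach as the paper: both identify that the naive intersection argument is invalid here (the paper makes exactly this point in \remref{altalg}) and instead verify directly, regime by regime, that a single conditional law for $Y$ given $T=t$ can be chosen to work across all three values of $F_T$, handling the probability-zero cases by free assignment. The only cosmetic difference is that you pivot through the kernels $\Pi_{t,t^*}$ from the proof of \lemref{basicdag} and first collapse them to a common $R_t$ via ignorability in the interventional regime, whereas the paper chains the identities directly using distributional consistency \eqref{idledist} and ignorability \eqref{indyft} to match the idle and interventional regimes before filling in the opposite interventional regime.
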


\begin{proof}
  We first dispose of the trivial case that $T^*$ has a one-point
  distribution.  In that case the conditioning on $T^*$ in
  \eqref{basicdag} is redundant and we immediately obtain
  \eqref{igndag}.

  Otherwise, $0<\pr(T^*=1)<1$.  We then have
  \begin{eqnarray}
    \label{eq:ig1}        
    Y \mid (T=1, F_T = \idle) &\approx&  Y \mid ( T^* = 1, F_T = 1)\\
    \label{eq:ig2}
                              &\approx& Y \mid F_T = 1\\
    \label{eq:ig3}
                              &\approx& Y \mid (T=1, F_T = 1).
  \end{eqnarray}
  Note that all conditioning events have positive probability in their
  respective regimes.  Here \eqref{ig1} holds by distributional
  consistency \eqref{idledist}, \eqref{ig2} by ignorability
  \eqref{indyft}, and \eqref{ig3} because, under $F_T = 1$, $T=1$ with
  probability 1.  So we have a common well-defined distribution,
  $\Delta_1$ say, for $Y$ given $T=1$ in both regimes $F_T = \idle$
  and $F_T = 1$.  Further, since under $F_T=0$ the event $T=1$ has
  probability $0$, we are free to define the conditional distribution
  of $Y$ given $T=1$ in regime $F_T=0$ as $\Delta_1$ also, so making
  $\Delta_1$ the common distribution of $Y$ given $T=1$ in all 3
  regimes, showing that $\ind Y {F_T} {T=1}$.  Since a similar
  argument holds for conditioning on $T=0$ the result follows.
\end{proof}
\begin{rem}
  \label{rem:altalg}
  An apparently simpler alternative proof of \lemref{ignalg} is as
  follows.  By \lemref{basicdag}, the conditional distribution of $Y$,
  given $({F_T}, T, T^*)$, does not depend on $F_T$, while by
  \eqref{ignf} this conditional distribution does not depend on $T^*$.
  So (it appears), it must follow that it depends only on $T$, whence
  $\ind Y {(F_T, T^*)} T$, implying the desired result.  This is a
  special case of a more general argument: that $\ind X Y {(Z, W)}$
  and $\ind X Z {(Y, W)}$ together imply $\ind X {(Y,Z)} W$.  However
  this argument is invalid in general \cite{apd:misl}.  To justify it
  in this case we have needed, in our proof \lemref{ignalg}, to call
  on structural properties (in particular, distributional consistency,
  and the way in which $T$ is determined by $F_T$ and $T^*$) in
  addition to conditional independence properties.
\end{rem}
\begin{cor}
  \label{cor:aaa}
  Ignorability holds if and only if
  \begin{equation}
    \label{eq:ddd}
    \ind Y {(T^*,F_T)} T.
  \end{equation}    
\end{cor}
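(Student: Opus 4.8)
The plan is to prove both implications purely by manipulating extended conditional independences, invoking only those separoid (semi-graphoid) properties that remain valid for ECI with a stochastic variable on the left \cite{pc/apd:eci}---namely weak union and contraction---while scrupulously avoiding the intersection-type composition that \remref{altalg} flags as invalid. The two ingredients I will need are ignorability in the form \eqref{ignf}, $\ind Y {T^*} {(T, F_T)}$, and the conclusion \eqref{igndag} of \lemref{ignalg}, $\ind Y {F_T} T$. In every statement below the left-hand variable is the stochastic response $Y$, so the separoid axioms I use are applicable.

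For the forward implication, suppose ignorability holds. Then \eqref{ignf} holds, and \lemref{ignalg} supplies \eqref{igndag}. I would combine these by contraction: taking the target variable to be $Y$, the conditioning variable to be $T$, and splitting the remaining variables as $F_T$ and $T^*$, contraction states that $\ind Y {F_T} T$ together with $\ind Y {T^*} {(F_T, T)}$ yields $\ind Y {(T^*, F_T)} T$, which is precisely \eqref{ddd}. For the converse, suppose \eqref{ddd} holds. A single application of weak union---peeling $F_T$ off into the conditioning set---gives $\ind Y {T^*} {(T, F_T)}$, which is \eqref{ignf}, \ie\ ignorability. This direction is immediate.

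The main subtlety, and the reason the corollary is not a one-line graphoid exercise, is the pitfall flagged in \remref{altalg}. One is tempted to derive \eqref{ddd} straight from the always-valid \eqref{basicdag}, $\ind Y {F_T} {(T, T^*)}$, together with ignorability \eqref{ignf}, $\ind Y {T^*} {(T, F_T)}$; but this would be exactly an instance of the intersection property, $\ind X Y {(Z,W)}$ and $\ind X Z {(Y,W)}$ imply $\ind X {(Y,Z)} W$, which fails for ECI in general. The route above sidesteps this by first passing through \lemref{ignalg}---whose proof already discharges the structural and positivity considerations (degeneracy of $T$ in the interventional regimes, the identity of $T$ and $T^*$ in the idle regime) by explicitly constructing a common conditional distribution $\Delta_t$---and only then invoking contraction, which is safe. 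I therefore expect no genuine obstacle beyond being careful about which axiom is applied at each step.
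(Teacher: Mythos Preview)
Your proof is correct and follows essentially the same route as the paper: for the forward direction you combine \eqref{ignf} with the conclusion \eqref{igndag} of \lemref{ignalg} via contraction, and for the converse you apply weak union to \eqref{ddd} to recover \eqref{ignf}---exactly the two moves the paper makes, only with the separoid axioms named explicitly. Your added paragraph on why the intersection shortcut is illegitimate is a helpful gloss on \remref{altalg} but not required for the argument itself.
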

\begin{proof}
  \begin{description}
  \item[If:] Further conditioning \eqref{ddd} on $F_T$ yields
    \eqref{ignf}.
  \item[Only if:] Property~\eqref{ddd} is equivalent to the conjunction
    of \eqref{ignf} and \eqref{igndag}.
  \end{description}
\end{proof}


\subsubsection{Graphical representation}
\label{sec:ignore2}
The DAG representing \eqref{tstarsame} and \eqref{ddd} is shown in
\figref{igndag}.  Compared with \figref{basic}, we see that the arrow
from $T^*$ to $Y$ has been removed.
\begin{figure}[htbp]
  \begin{center}
    \resizebox{2in}{!}{\includegraphics{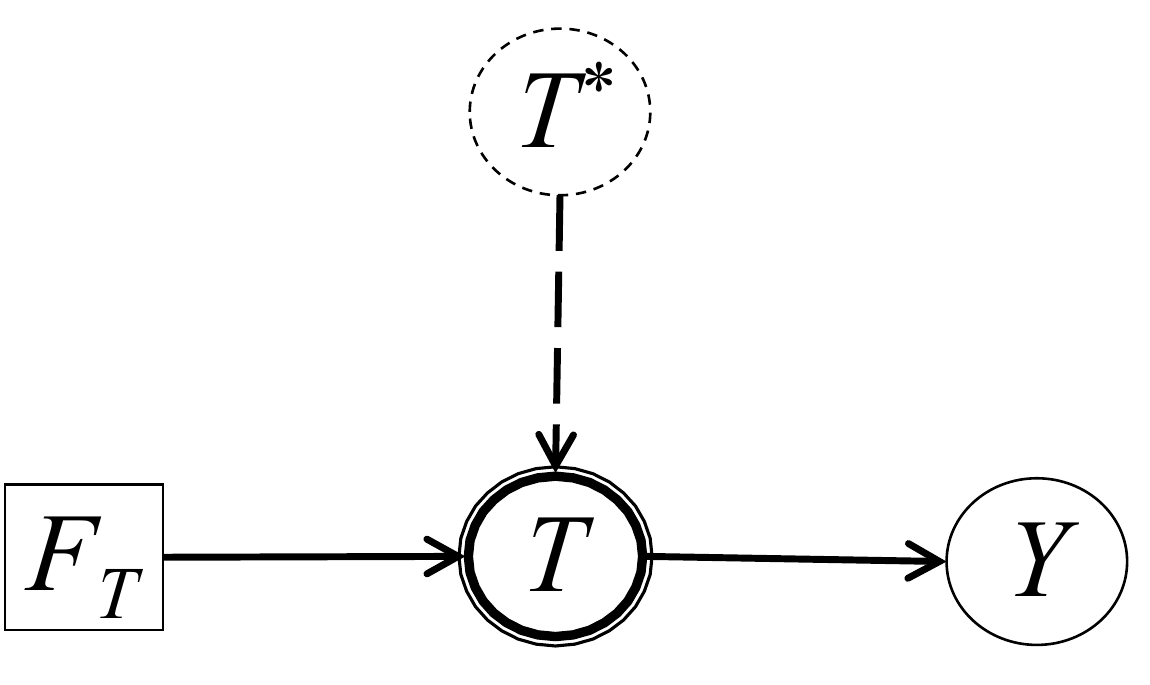}}
    \caption{Modification of \figref{basic} representing ignorability}
    \label{fig:igndag}
  \end{center}
\end{figure}

\begin{rem}
  \label{rem:altgraph}
  We might try and make the deletion of the arrow from $T^*$ to $Y$ in
  \figref{basic} into a graphically based argument for
  \lemref{ignalg}, for it appears to impose just the additional
  conditional independence property \eqref{ignf} representing
  ignorability, and to imply the desired result \eqref{igndag}.
  However this is again a misleading argument: inference from such
  surgery on a DAG can only be justified when it has a basis in the
  algebraic theory of conditional independence
  \cite{apd:CIST,pc/apd:eci}, which here it does not, on account of
  the fallacious argument identified in \remref{altalg}.
\end{rem}

\figref{collign} results on ``eliminating $T^*$'' from \figref{igndag}:
that is to say, the conditional independencies represented in
\figref{collign} are exactly those of \figref{igndag} that do not
involve $T^*$.  In this case, the only such  property is
\eqref{igndag}.
\begin{figure}[htbp]
  \begin{center}
    \resizebox{2in}{!}{\includegraphics{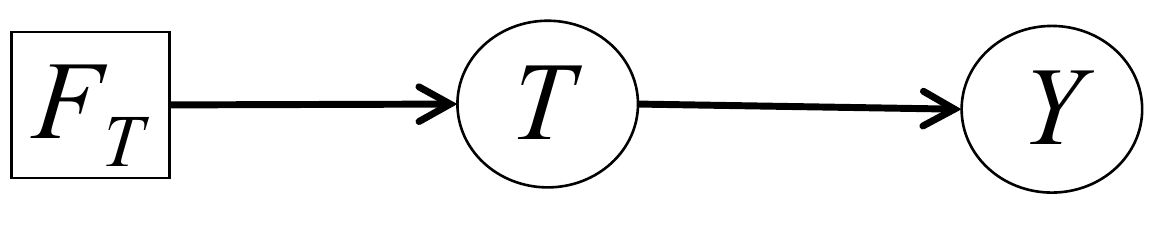}}
    \caption{Collapsed DAG under ignorability, representing
      $\ind Y {F_T} T$}
    \label{fig:collign}
  \end{center}
\end{figure}

The ECI property \eqref{igndag}, and the DAG of \figref{collign}, are
the basic (respectively algebraic and graphical) representations of
``no confounding'' in the DT approach, which has been treated as a
primitive in earlier work.  The above analysis supplies deeper
understanding of these representations.  Although on getting to this
point we have been able to eliminate explicit consideration of the
treatment selection variable $T^*$, our more detailed analysis, which
takes it into account, makes clear just what needs to be argued in
order to justify \eqref{igndag}: namely, the property of ignorability
expressed algebraically by \eqref{indyft} or \eqref{ignf} and
graphically by \figref{igndag}, and further described in
\secref{ignore}.

\section{Covariates}
\label{sec:cov}

The ignorability assumption \eqref{indys_t} will often be untenable.
If, for example, those fingered for treatment (so with $T^*=1$) are
sicker than those fingered for control ($T^*=0$)---as might well be
the case in a non-randomised study---then (under either treatment
application $\check T = t$, $t =0,1$) we would expect a worse outcome
$Y$ when knowing $T^* =1$ than when knowing $T^*=0$.  However, we
might be able to reinstate \eqref{indys_t} after further conditioning
on a suitable variable $X$ measuring how sick an individual is.  That
is, we might be able to make a case that, {\em after restricting
  attention to those individuals having a specified degree $X=x$ of
  sickness\/}, the further information that an individual had been
fingered for treatment would make no difference to the assessment of
the individual's response (under either treatment application).  This
would of course require that, after taking sickness into account, the
treatment assignment process was not further related to other possible
indicators of outcome (\eg, sex, age,\ldots).  If it is, these would
need to be included as components of the (typically multivariate)
variable $X$.  We assume that the appropriate variable $X$ is (in
principle at least) fully measurable, both for the individuals in the
study and (unlike $T^*$) for myself.  We assume internal
exchangeability of $(X, T^*, Y)$, extending this to external
exchangeability for $(X,Y)$.\footnote{This last condition could be
  relaxed, allowing my own distribution for $X$ to differ from that in
  the data, while retaining conditional exchangeability for $Y$, given
  $X$.  For simplicity we do not consider this further here.}

If and when such a variable $X$ can be identified, we will be able to
justify an assumption of {\em conditional ignorability\/}:
\begin{equation}
  \label{eq:condig}
  \ind Y {T^*} {(X,\check T)}.
\end{equation}

Furthermore, to be of any use in addressing my own decision problem,
such a variable must be a covariate, available prior to treatment
application, and so, in particular must (jointly with $T^*$, at least
for the study individuals, for whom $T^*$ is defined) have the same
distribution under either hypothetical treatment application.  This is
expressed as
\begin{equation}
  \label{eq:samedist}
  \indo {(X,T^*)} {\check T}.
\end{equation}
In particular, there will be a common marginal distribution, $P_X$
say, for $X$, in both interventional regimes.

When both \eqref{condig} and \eqref{samedist} are satisfied, we call
$X$ a {\em sufficient covariate\/}.  These properties are represented
by the DAG of \figref{suffcov0}.

\begin{figure}[htbp]
  \begin{center}
    \resizebox{2in}{!}{\includegraphics{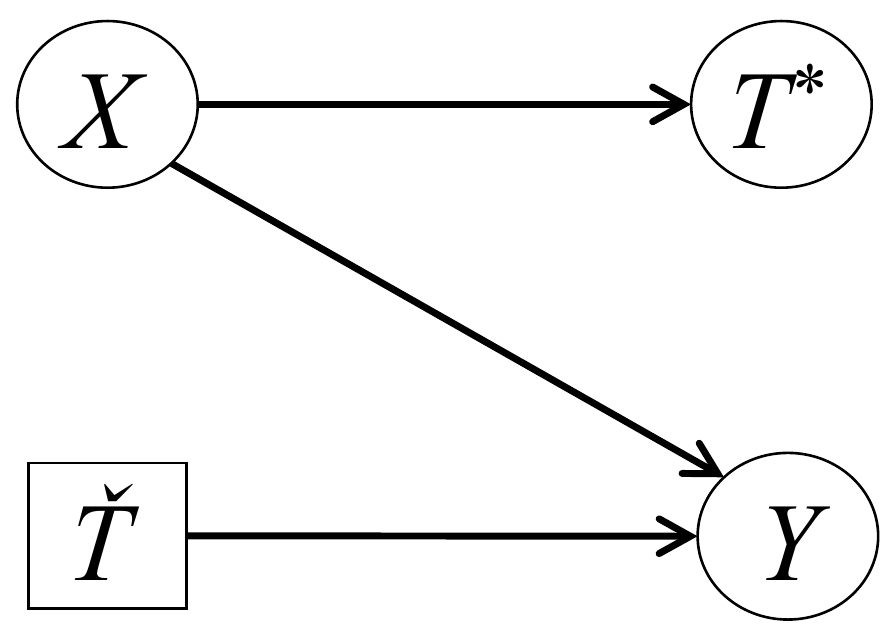}}
    \caption{DAG representing sufficient covariate $X$:
      $\indo {(X,T^*)} {\check T}$ and $\ind Y {T^*} {(X,\check T)}$.}
    \label{fig:suffcov0}
  \end{center}
\end{figure}

\subsection{Idle regime}
\label{sec:idlecov}
As in \secref{idle}, we introduce the regime indicator $F_T$, allowing
for consideration of the ``idle'' observational regime $F_T = \idle$,
in addition to the interventional regimes $F_T=t$ ($t=0,1)$; and the
constructed ``applied treatment'' variable $T$ of \defref{appl}.
Arguing as for \eqref{ignf}, \eqref{condig} implies
\begin{equation}
  \label{eq:xx}
  \ind Y {T^*} {(X,T,F_T)}.
\end{equation}

\begin{lem}
  \label{lem:covprops}
  Let $X$ be a sufficient covariate.  Then
  \begin{eqnarray}
    \label{eq:cov1}
    {(X,T^*)} &\cip&  {F_T}\\
    \label{eq:YXdist}
    Y &\cip& (T^*,F_T) \,\cd\, (X,T).
  \end{eqnarray}
\end{lem}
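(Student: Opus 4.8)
The plan is to handle the two assertions separately, since \eqref{cov1} is an unconditional invariance statement about pre-treatment quantities, whereas \eqref{YXdist} is the covariate analogue of the ``no confounding'' conclusion \eqref{igndag} of \lemref{ignalg}. For \eqref{cov1} I would argue exactly as in \secref{idle}, where \eqref{indst} was promoted to \eqref{tstarsame}: both $X$ and $T^*$ are determined before any treatment is applied, so the pair $(X,T^*)$ behaves as a \emph{covariate} whose joint law is unaffected by the regime. Property \eqref{samedist} already supplies a common distribution of $(X,T^*)$ across the two interventional regimes $\check T=0,1$; and since under the idle regime $F_T=\idle$ the values of $X$ and $T^*$ are again fixed before treatment application, $(X,T^*)$ retains that same covariate distribution there too. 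Its law is therefore identical in all three regimes, which is precisely \indo {(X,T^*)} {F_T}.

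For \eqref{YXdist} I would first note, exactly as in the ``only if'' part of \corref{aaa}, that \ind Y {(T^*,F_T)} {(X,T)} is equivalent (by the chain rule) to the conjunction of \eqref{xx}, that is \ind Y {T^*} {(X,T,F_T)}, and the covariate analogue of \eqref{igndag}, that is \ind Y {F_T} {(X,T)}. Since \eqref{xx} is already available, the substantive step is to establish \ind Y {F_T} {(X,T)}, and I would prove this by copying the construction in the proof of \lemref{ignalg} while carrying the extra conditioning event $X=x$ throughout. Fix $x$ in the support of $X$ (its marginal law being common across regimes by \eqref{samedist} and \eqref{cov1}) and fix $t\in\{0,1\}$. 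Applying distributional consistency \eqref{idledist} to the set of domain variables $(X,Y)$ and then conditioning on $X=x$, next conditional ignorability \eqref{condig} in the interventional regime $F_T=t$, and finally the fact that $T=t$ almost surely under $F_T=t$, gives the chain
\begin{eqnarray*}
  Y \mid (X=x,\, T=t,\, F_T=\idle)
    &\approx& Y \mid (X=x,\, T^*=t,\, F_T=t)\\
    &\approx& Y \mid (X=x,\, F_T=t)\\
    &\approx& Y \mid (X=x,\, T=t,\, F_T=t).
\end{eqnarray*}
This exhibits a common distribution $\Delta_{x,t}$ for $Y$ given $(X=x,T=t)$ in the regimes $F_T=\idle$ and $F_T=t$; in the remaining regime $F_T=1-t$ the event $T=t$ has probability zero, so I am free to declare its conditional distribution to be $\Delta_{x,t}$ as well. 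Thus $\Delta_{x,t}$ serves in all three regimes, establishing \ind Y {F_T} {(X,T)}, and combining it with \eqref{xx} by contraction (the valid direction of the chain rule, exactly as used in \corref{aaa}) yields \eqref{YXdist}.

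The main obstacle, and the reason a direct construction is needed rather than a purely formal shuffling of conditional-independence symbols, is the handling of the zero-probability conditioning events: a quantity such as $Y \mid (X=x,T=1,F_T=0)$ is not pinned down by the data-generating mechanism and must be supplied by explicit choice. This is precisely the pitfall flagged in \remref{altalg}, where the superficially attractive ``intersection''-type deduction fails in general. I would therefore be careful to invoke only contraction together with the structural facts (\defref{appl} and distributional consistency) in the decisive step, and to check that every conditioning event appearing in the displayed chain has positive probability in its own regime, exactly as was done in \lemref{ignalg}.
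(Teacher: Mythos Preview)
Your treatment of \eqref{YXdist} matches the paper's own proof essentially line for line: both reduce it to the conjunction of \eqref{xx} and $\ind{Y}{F_T}{(X,T)}$, and establish the latter by re-running the proof of \lemref{ignalg} with the extra conditioning on $X=x$ throughout, taking care over the zero-probability events.

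For \eqref{cov1}, however, your route differs from the paper's. You argue informally, by analogy with the justification of \eqref{tstarsame}, that $(X,T^*)$ is a pre-treatment pair and so ``retains its covariate distribution'' under $F_T=\idle$ as well as under the two interventional regimes. The paper instead \emph{derives} \eqref{cov1} from assumptions already in play: it applies distributional consistency \eqref{idledist} with $V=X$ to obtain $X\mid(T^*=t,F_T=\idle)\approx X\mid(T^*=t,F_T=t)$, then uses \eqref{samedist} to equate the latter across $t=0,1$, giving $\ind{X}{F_T}{T^*}$; combining this with the already-assumed \eqref{tstarsame} yields $\indo{(X,T^*)}{F_T}$. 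Your argument is not wrong, but it effectively promotes \eqref{cov1} to a fresh modelling assumption on a par with \eqref{tstarsame}, whereas the paper shows it is a \emph{consequence} of \eqref{tstarsame}, \eqref{samedist}, and distributional consistency---which is a slightly stronger and more informative result.
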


\begin{proof}
  By distributional consistency \eqref{idledist},
  \begin{eqnarray*}
    X \cd T^* =1, F_T = \idle &\approx& X \cd T^*=1, F_T = 1\\
                              &\approx& X \cd T^*=1, F_T = 0
  \end{eqnarray*} 
  by \eqref{samedist}.  Hence $\ind X {F_T} {T^*=1}$.  A parallel
  argument shows $\ind X {F_T} {T^*=0}$, so that $\ind X {F_T} {T^*}$.
  On combining this with \eqref{tstarsame} we obtain \eqref{cov1}.

  As for \eqref{YXdist}, this is equivalent to the conjunction of
  \eqref{xx} and $\ind Y {F_T} {(T,X)}$.  The argument for the latter
  (again, requiring distributional consistency) parallels that for
  \eqref{igndag}, after further conditioning on $X$ throughout.
\end{proof}

The properties \eqref{cov1} and \eqref{YXdist} are embodied in the DAG
of \figref{covstar}.
\begin{figure}[h]
  \begin{center}
    \resizebox{2in}{!}{\includegraphics{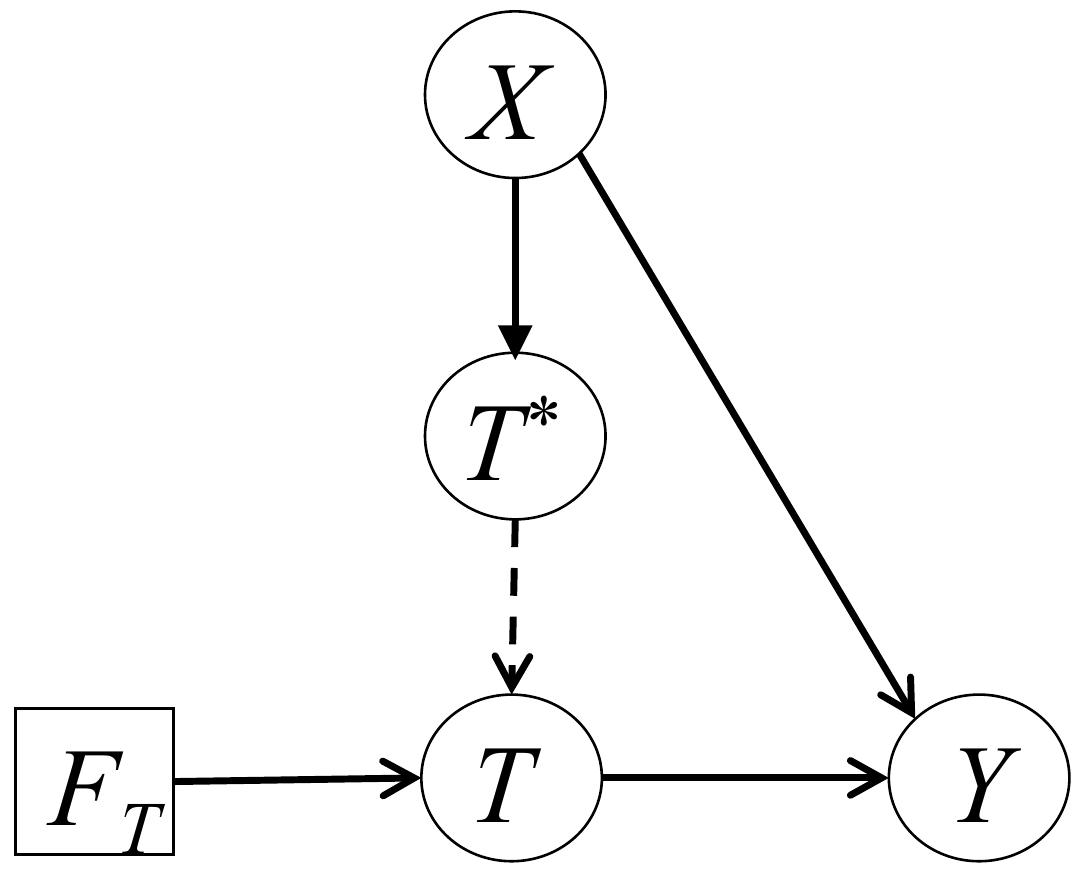}}
    \caption{Full DAG with sufficient covariate $X$ and regime indicator}
    \label{fig:covstar}
  \end{center}
\end{figure}
This implies, on eliminating the unobserved variable $T^*$:
\begin{eqnarray}
    \label{eq:cov2}
    {X} &\cip&  {F_T}\\
    \label{eq:YXdist2}
    Y &\cip& F_T \,\cd\, (X,T),
  \end{eqnarray}
as represented by \figref{covnostar}.
\begin{figure}[h]
  \begin{center}
    \resizebox{2in}{!}{\includegraphics{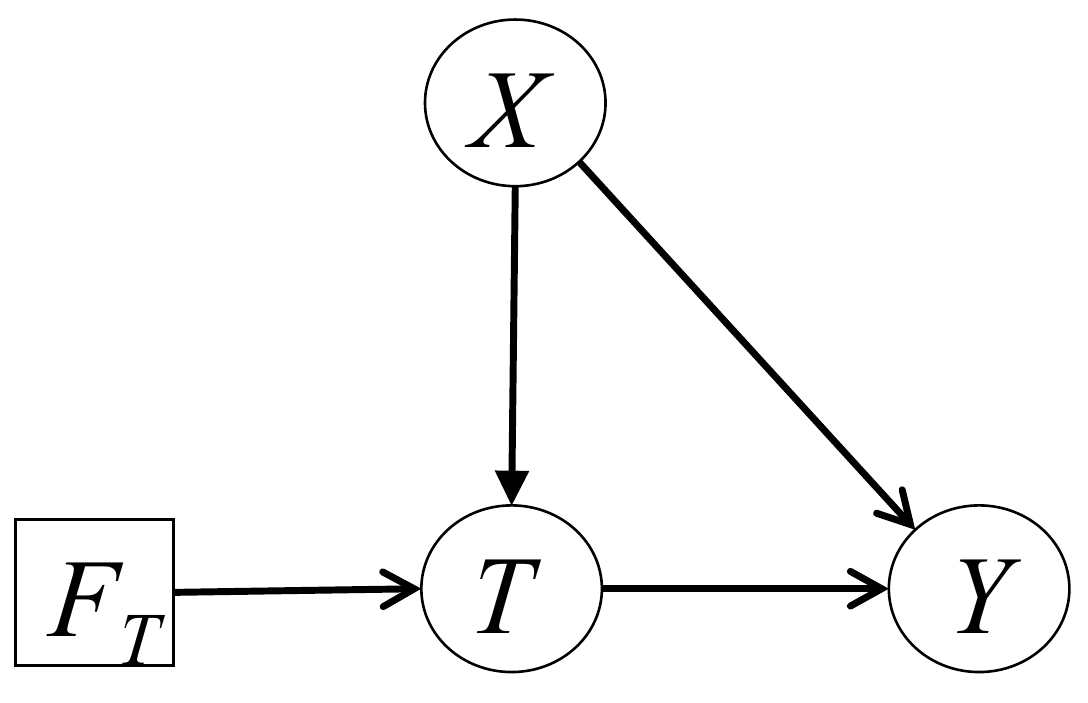}}
    \caption{Reduced DAG with sufficient covariate $X$ and regime indicator}
    \label{fig:covnostar}
  \end{center}
\end{figure}
Properties \eqref{cov2} and \eqref{YXdist2}, as embodied in
\figref{covnostar}, are the basic DT representations of a sufficient
covariate.  Assuming $X$, $T$ and $Y$ are all observed, this is what
is commonly referred to as ``no unmeasured confounding''.

\section{More complex DAG models}
\label{sec:complex}

\subsection{An example}
\label{sec:dagex}

Consider the following story.  In an observational setting, variable
$X_0$ represents the initial treatment received by a patient; this is
supposed to be applied independently of an (unobserved) characteristic
$H$ of the patient.  The variable $Z$ is an observed response
depending, probabilistically, on both the applied treatment $X_0$ and
the patient characteristic $H$. A subsequent treatment, $X_1$, can
depend probabilistically on both $Z$ and $H$, but not further on
$X_0$.  Finally the distribution of the response $Y$, given all other
variables, depends only on $X_1$ and $Z$.  \figref{pearlswiga} is a
DAG representing this story by means of $d$-separation.

\begin{figure}[htbp]
  \centering
  \includegraphics[width=.4\textwidth]{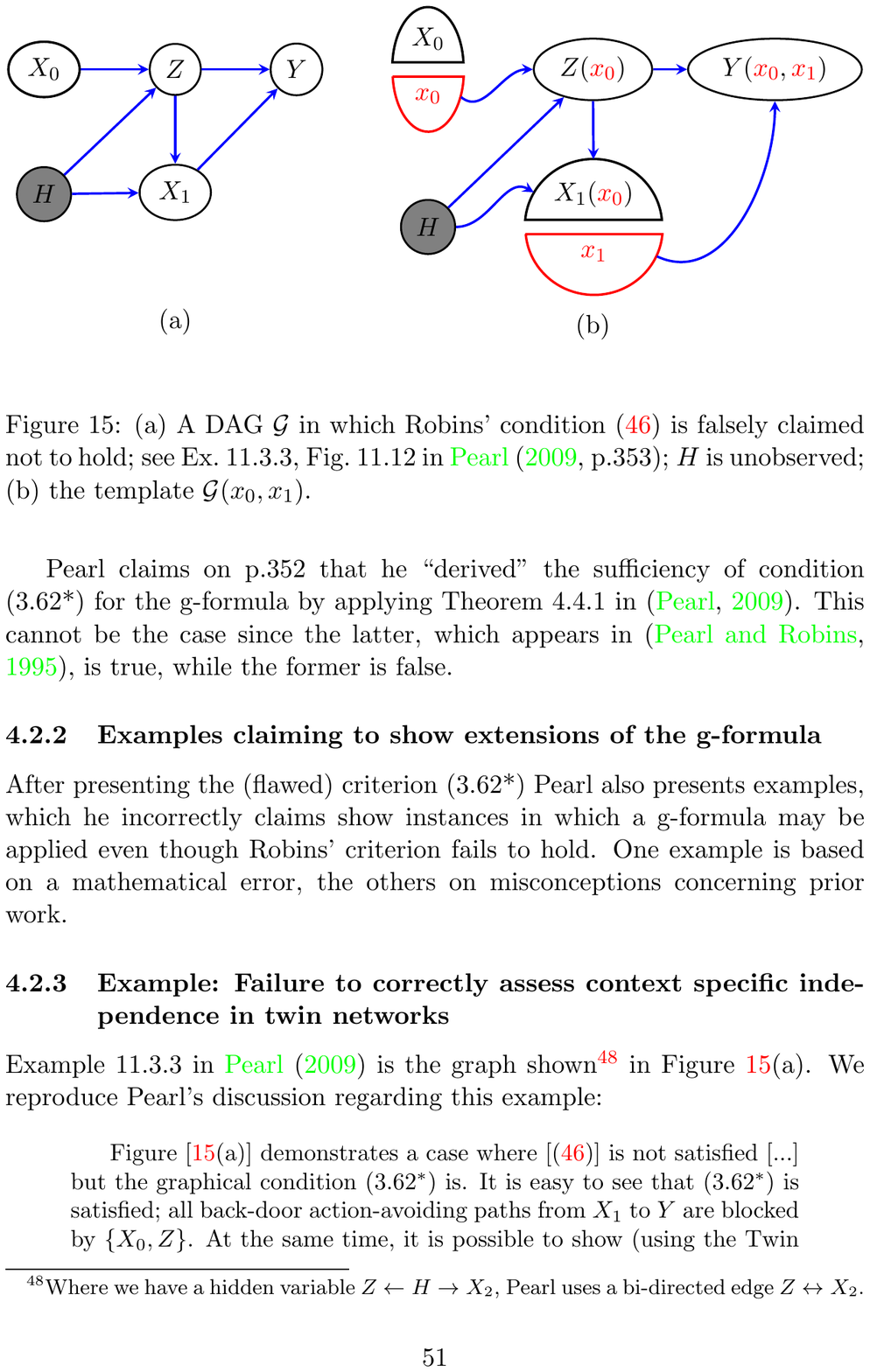}
  \caption{Observational DAG}
  \label{fig:pearlswiga}
\end{figure}

In addition to the observational regime, we want to consider possible
interventions to set values for $X_0$ and $X_1$.  We thus have two
non-stochastic regime indicators, $F_0$ and $F_1$: $F_i = x_i$
indicates that $X_i$ is externally set to $x_i$, while
$F_i = \emptyset$ allows $X_i$ to develop ``naturally''.  The overall
regime is thus determined by the pair $(F_0,F_1)$.

\figref{pearlstara} augments \figref{pearlswiga}, in a seemingly
natural way, to include these regime indicators.  It represents, by
$d$-separation, ways in which the domain variables are supposed to
respond to interventions.
\begin{figure}[htbp]  \centering
  \includegraphics[width=.4\textwidth]{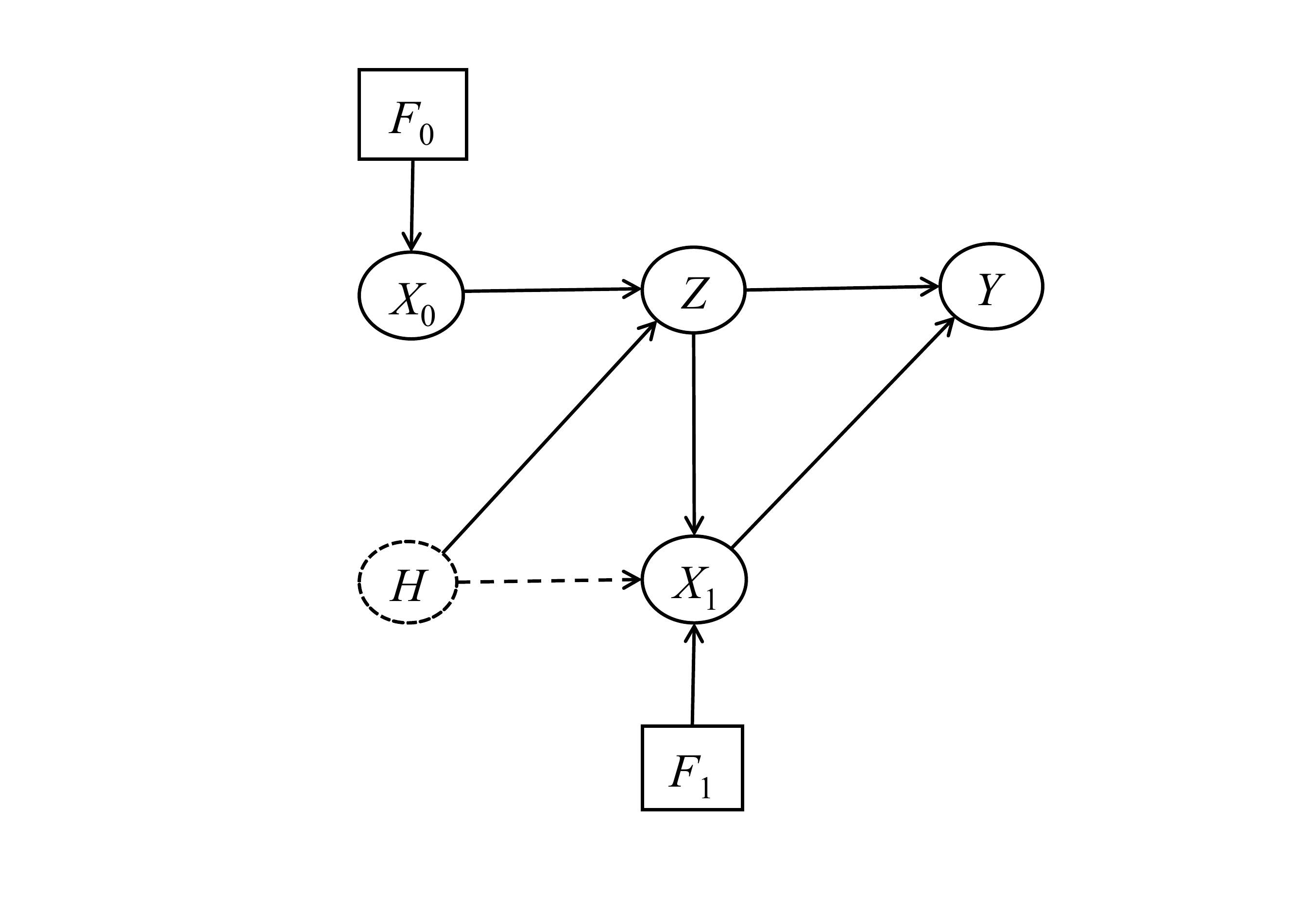}
  \caption{Augmented DAG}
  \label{fig:pearlstara}
\end{figure}
For example, it implies $\ind Y {(X_0, H, F_0, F_1)} {(Z.X_1)}$: once
we know $Z$ and $X_1$, not only are $X_0$ and $H$ irrelevant for
probabilistic prediction of $Y$, but so too is the information as to
whether either or both of $X_0$, $X_1$ arose naturally, or were set by
intervention.  In particular, the conditional distribution of $Y$
given $(Z,X_1)$, under intervention at $X_1$, is supposed the same as
in the observational regime modelled by \figref{pearlswiga}.

\subsubsection{From observational to augmented DAG }
\label{sec:conver}

It does not follow, merely from the fact that we can model the
observational conditional independencies between the domain variables
by \figref{pearlswiga}, that their behaviour under the entirely
different circumstance of intervention must be as modelled by
\figref{pearlstara}.  Strong additional assumptions are required to
bridge this logical gap.  These we now elaborate.

We again introduce ``intention to treat'' variables, $X_0^*$ and
$X_1^*$, the realised $X_0$ and $X_1$, in any regime, being given by
\begin{equation}
  \label{eq:daginter}
  X_i = \left\{
    \begin{array}[c]{ll}
      X^*_i   &\mbox{if $F_i = \emptyset$}\\
      F_i &  \mbox{if $F_i\neq \emptyset$}.
    \end{array}
  \right.
\end{equation}
Since, in the observational regime, $X_i=X_i^*$, \figref{pearlswiga}
would still be observationally valid on replacing each $X_i$ by
$X_i^*$.

The different regimes are supposed linked together by the following
assumptions, which we first present and then motivate:
\begin{eqnarray}
  \label{eq:dagign0}
  {X_0^*} &\cip& {(F_0,F_1)}\\
  \label{eq:dagign1a}
  {(H,Z,X_1^*,Y)} &\cip& {(F_0,X_0^*)} \mid {(F_1,X_0)}\\
  \label{eq:dagign2a}
  (X_0^*,H,Z,X_1^*) &\cip& F_1 \mid F_0\\
  \label{eq:dagign3}
  Y &\cip& (F_1, X_1^*) \mid (F_0,X_0,H,Z,X_1).
\end{eqnarray}

Note that, since $X_i$ is determined by $(F_i,X_i^*)$, \eqref{dagign1a}
and \eqref{dagign2a} are equivalent to:
\begin{eqnarray}
  \label{eq:dagign1}
  {(H,Z,X_1^*,X_1,Y)} &\cip& {(F_0,X_0^*)} \mid {(F_1,X_0)}\\
  \label{eq:dagign2}
  (X_0^*,X_0,H,Z,X_1^*) &\cip& F_1 \mid F_0.
\end{eqnarray}

\paragraph{Comments on the assumptions}

\renewcommand{\theenumi}{(\roman{enumi})} In order to understand the
above assumptions, we should consider \figref{pearlswiga} as
describing, not only the conditional independencies between variables,
but also a partial order in which the variables are generated: it is
supposed that, in any regime, the value of a parent variable is
determined before that of its child.  In particular it is assumed that
an intervention on a variable can not affect that variable's
non-descendants---including their intention-to-treat variables and its
own; but may affect its descendants---including their associated
intention-to-treat variables.

\begin{enumerate}
\item \label{it:x0} Similar to \eqref{tstarsame}, \eqref{dagign0}
  expresses the property that an intention-to-treat variable, here
  $X_0^*$, should behave as a covariate for $X_0$, and so be
  independent of which regime, here $F_0$, is operating on $X_0$.
  Moreover, $X_0^*$ should not be affected by a subsequent
  intervention (or none), $F_1$, at $X_1$.
  
\item \label{it:x0a} Assumption~\eqref{dagign1a} is a version of the
  ignorability property \eqref{ddd}.  It says that an intervention on
  $X_0$ should be ignorable in its effect on all other variables.
  Moreover this should apply conditional on $F_1$, \ie\ whether or not
  there is an intervention at $X_1$.
  \begin{rem}
    \label{rem:withdc}
    As previously discussed, ignorability is a strong assumption,
    requiring strong justification.  Also note that, as shown by
    \corref{aaa}, \eqref{dagign1a} is implicitly assuming the
    distributional consistency property (\defref{distcons}), in
    addition to ignorability.
  \end{rem}
 
\item \label{it:x1} Assumption~\eqref{dagign2a} expresses the
  requirement that $(X_0^*,H,Z,X_1^*)$, being generated prior to
  $X_1$, should not be affected by intervention $F_1$ at $X_1$.
  (However, they might depend on which regime, $F_0$, operates on
  $X_0$.)
 
\item \label{it:x1a} Similar to \itref{x0a}, \eqref{dagign3} says
  that, conditional on all the domain variables, $(X_0, H, Z)$,
  generated prior to $X_1$, the effect of intervention $F_1$ at $X_1$
  is ignorable for its effect on $Y$; moreover, this should hold
  whether or not there is intervention $F_0$ at $X_0$.  Informally,
  taken together with \eqref{dagign2}, this requires that $(X_0,H,Z)$
  form a sufficient covariate for the effect of $X_1$ on $Y$.
\end{enumerate}

In the following we make extensive (but largely implicit) use of the
axiomatic properties of (extended) conditional independence
\cite{apd:CIST,pearl:88}:
 \begin{description}
  \item[P\boldmath$1$ (Symmetry):] \ind{X}{Y}{Z} $\Rightarrow$
    \ind{Y}{X}{Z}.
  \item[P\boldmath$2$:] \ind{X}{Y}{Y}.
  \item[P\boldmath$3$ (Decomposition):] \ind{X}{Y}{Z} {\rm and} $W$ a
    function of $Y$ $\Rightarrow$ \ind{X}{W}{Z}.
  \item[P\boldmath$4$ (Weak Union):] \ind{X}{Y}{Z} {\rm and} $W$ a
    function of $Y$ $\Rightarrow$ \ind{X}{Y}{(W,Z)}.
  \item[P\boldmath$5$ (Contraction):] \ind{X}{Y}{Z} {\rm and}
    \ind{X}{W}{(Y,Z)} $\Rightarrow$ \ind{X}{(Y,W)}{Z}.
  \end{description}

\begin{lemma}
  \label{lem:stardag}
  Suppose that the observational conditional independencies are
  represented by \figref{pearlswiga}, and that assumptions
  \eqref{dagign0}--\eqref{dagign3} apply.  Then the extended
  conditional independencies between domain variables,
  intention-to-treat variables and regime indicators are represented
  by \figref{pearlstarb}.

  \begin{figure}[htbp]
    \begin{center}
      \includegraphics[width=.4\textwidth]{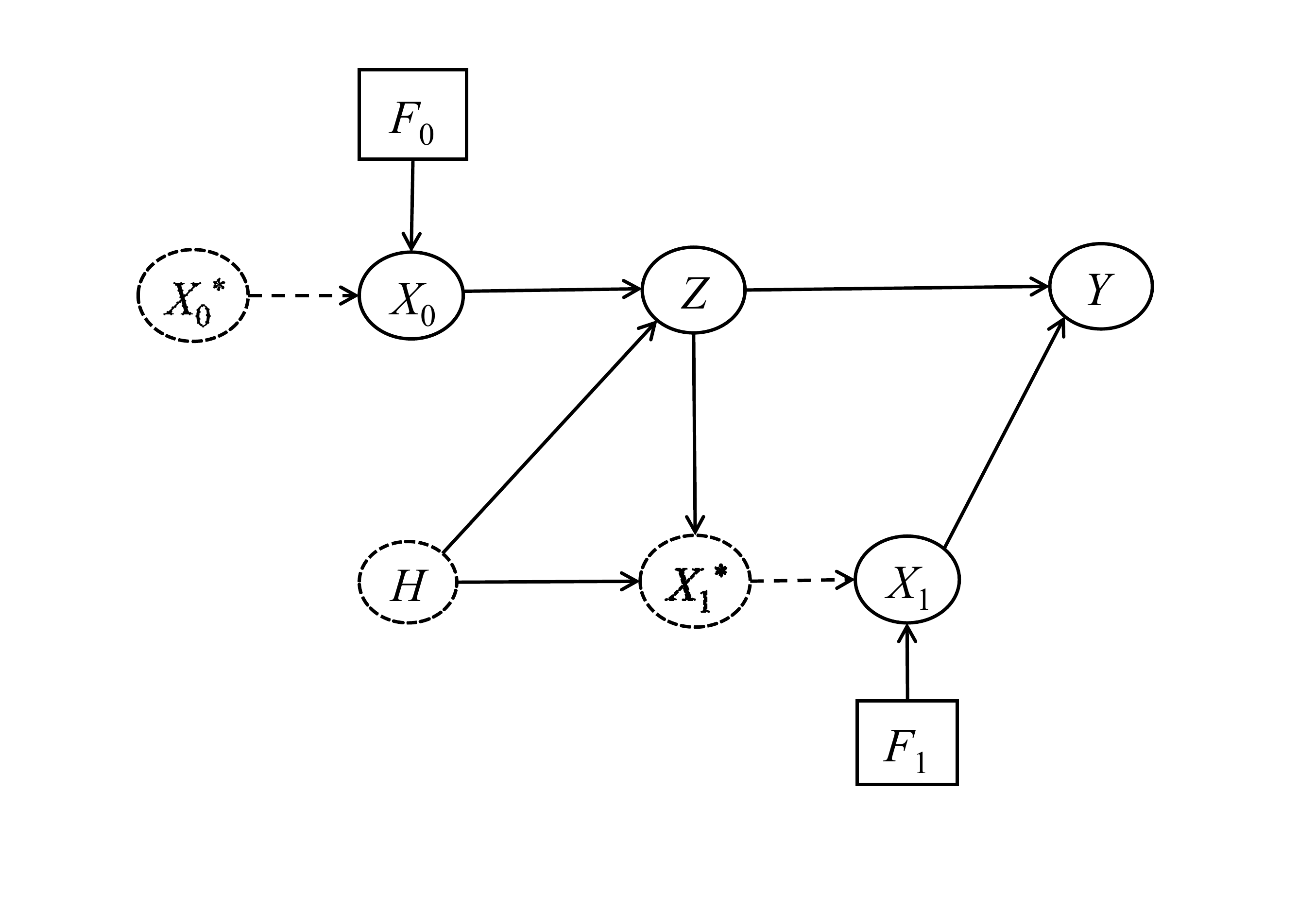}
    \caption{ITT DAG}
    \label{fig:pearlstarb}
  \end{center}
\end{figure}
\end{lemma}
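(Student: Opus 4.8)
The plan is to show that the full regime-indexed joint law admits a recursive factorisation according to \figref{pearlstarb}. Since, for an augmented DAG, the $d$-separation criterion is equivalent (by the ECI Markov-equivalence results of \cite{apd:ciso,pc/apd:eci}) to the recursive factorisation, and hence to the ordered (directed local) Markov property, it suffices to fix one topological ordering of \figref{pearlstarb} and verify, node by node, that each variable is extended-conditionally independent of its predecessors given its parents. I would use the generative order of \figref{pearlswiga} with the regime indicators and intention-to-treat variables slotted in at the points where they act, for instance $F_0, F_1, X_0^*, H, X_0, Z, X_1^*, X_1, Y$, so that each applied treatment $X_i$ follows $(F_i,X_i^*)$ and each domain variable follows its observational parents.

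First I would dispose of the nodes whose local property is immediate. The two applied-treatment variables $X_0$ and $X_1$ are, by \eqref{daginter}, deterministic functions of their parents $(F_0,X_0^*)$ and $(F_1,X_1^*)$, so each is trivially independent of all earlier variables given its parents (axioms P2 and P3); this is the functional-determination step already used in the proof of \lemref{basicdag}. The root intention variable $X_0^*$ requires exactly $\indo{X_0^*}{(F_0,F_1)}$, which is assumption \eqref{dagign0}; and $F_0,F_1$ are non-stochastic regime indicators carrying no distribution, so their local conditions hold vacuously.

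The substance lies in the four stochastic nodes $H$, $Z$, $X_1^*$ and $Y$, whose required local independencies must be assembled from the \emph{observational} Markov properties of \figref{pearlswiga} together with the linking assumptions, manipulated by P1--P5. Two ingredients feed every such derivation. First, because $X_i=X_i^*$ in the observational regime, \figref{pearlswiga} stays valid with each $X_i$ replaced by $X_i^*$ (the remark following \eqref{daginter}), supplying the observational conditional independencies among $\{X_0^*,H,Z,X_1^*,Y\}$; distributional consistency --- which, as noted in \remref{withdc}, is already built into \eqref{dagign1a} --- is what transfers the relevant observational conditional laws into the interventional regimes. Secondly, the assumptions are exactly the compressed ``global'' form of what is needed: \eqref{dagign3} strips $(F_1,X_1^*)$ from the predecessors of $Y$ given $(F_0,X_0,H,Z,X_1)$, and combining it by weak union (P4) and contraction (P5) with the observational fact that $Y$ depends on its predecessors only through $(Z,X_1)$ yields $\ind{Y}{(F_0,F_1,X_0^*,H,X_0,X_1^*)}{(Z,X_1)}$; likewise decomposition (P3) and weak union (P4) applied to \eqref{dagign1a}, combined with \eqref{dagign2a}, remove $(F_0,X_0^*)$ and then $F_1$ from the conditioning sets for $Z$ and for $X_1^*$, while decomposition of \eqref{dagign1a} and \eqref{dagign2a} together with the observational $\indo{X_0^*}{H}$ delivers the unconditional $H \cip (F_0,F_1,X_0^*)$.

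The step I expect to be the main obstacle is that each of these mergers conditions on the \emph{applied} treatments $X_0,X_1$, whose relationship to the \emph{intended} treatments $X_i^*$ differs between regimes ($X_i=X_i^*$ when $F_i=\emptyset$, but $X_i=F_i$ under intervention). This is precisely the situation where, as \remref{altalg} and \remref{altgraph} warn, the superficially attractive inference ``$\ind{X}{Y}{(Z,W)}$ and $\ind{X}{Z}{(Y,W)}$ imply $\ind{X}{(Y,Z)}{W}$'' is fallacious, and the matching arrow-deletion by DAG surgery is not licensed. I would therefore justify each merging step as a genuine instance of contraction (P5) rather than of the invalid rule, leaning on the functional determination \eqref{daginter} and on distributional consistency as the extra structural inputs; and, exactly as in the proofs of \lemref{basicdag} and \lemref{ignalg}, I would track for each conditioning event the regime in which it has positive probability, defining the conditional law arbitrarily on null events, so as to glue together a single regime-invariant modular component for each node.
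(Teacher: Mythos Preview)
Your overall strategy---fix a topological order of \figref{pearlstarb} and verify the ordered local Markov property node by node, disposing of $X_0,X_1$ by functional determination and of $X_0^*$ by \eqref{dagign0}---is exactly the paper's approach. You have also correctly located the real obstacle: for $H$, $Z$, $X_1^*$ and $Y$ one must merge an independence of the form $\ind{\cdot}{F_0}{(F_1,\ldots)}$, coming from \eqref{dagign1a}, with one of the form $\ind{\cdot}{F_1}{(F_0,\ldots)}$, coming from \eqref{dagign2a} or \eqref{dagign3}, into a single $\ind{\cdot}{(F_0,F_1)}{\ldots}$; and this is precisely the pattern that \remref{altalg} warns is not a consequence of P1--P5 alone.

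The gap is in your proposed resolution. You say you will handle the merge ``as a genuine instance of contraction (P5) \ldots leaning on the functional determination \eqref{daginter} and on distributional consistency'', mirroring the proofs of \lemref{basicdag} and \lemref{ignalg}. But those proofs involve a \emph{single} regime indicator $F_T$; the structural inputs you cite (that $X_i$ is determined by $(F_i,X_i^*)$, and distributional consistency) link each $F_i$ to its own $X_i^*$ and $X_i$, but do not by themselves relate $F_0$ to $F_1$. To turn the two one-indicator statements into a joint one you need an additional ingredient that couples the two regime indicators. The paper supplies this via \remref{varind}: it treats $F_0$ and $F_1$ as (fictitiously) independent random variables, $\indo{F_0}{F_1}$. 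Combined with $\ind{X_0}{F_1}{F_0}$ (from \eqref{dagign2}) this yields $\ind{F_1}{F_0}{X_0}$, and it is \emph{this} that licenses the contraction step, giving \eqref{pp3}; the same device, now in the form $\ind{F_0}{F_1}{(X_0,H,Z,X_1)}$, is what underlies \eqref{p19} for $Y$. Equivalently, one may argue directly from the variation independence of $F_0$ and $F_1$: a function on the $3\times 3$ grid of regime pairs that is constant along rows (by \eqref{dagign1a}) and along columns (by \eqref{dagign2a}) is constant---but that argument too rests on the free variability of $(F_0,F_1)$, not on \eqref{daginter} or distributional consistency.

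So your sketch for $Y$ (``combine \eqref{dagign3} with the observational fact that $Y$ depends only on $(Z,X_1)$'') and for $Z,X_1^*,H$ needs exactly this missing piece: the observational local property can only be transported to all regimes \emph{after} you have established $\ind{(H,Z,X_1^*)}{(F_0,F_1)}{X_0}$ and $\ind{Y}{(F_0,F_1)}{(X_0,H,Z,X_1)}$, and those require the independence (or variation independence) of $F_0$ and $F_1$, not the tools you list.
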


\begin{rem}
  \label{rem:varind}
  A further property apparently represented in \figref{pearlstarb} is
  the independence of $F_0$ and $F_1$:
  \begin{equation}
    \label{eq:varind}
    \indo {F_0} {F_1}.  
  \end{equation}
  Now so far we have been able to meaningfully interpret an extended
  conditional independence assertion only when the left-hand term
  involves stochastic variables only---which seems to render
  \eqref{varind} meaningless.  Nevertheless, as a purely instrumental
  device, it is helpful to extend our understanding by considering the
  regime indicators as random variables also.  So long as all our
  assumptions and conclusions do not involve regime indicators in
  their left-hand term, any proof that uses this extended
  understanding will remain valid for the actual case of
  non-stochastic regime variables, as may be seen by conditioning on
  these.\footnote{It is not strictly necessary to regard the regime
    indicators as stochastic.  Instead we can interpret \eqref{varind}
    as expressing the non-stochastic property of {\em variation
      independence\/} \cite{apd:varind}, meaning that the range of
    possible values for each is unconstrained by the value taken by
    the other.  Indeed, this is implicit in our interpretive comments
    \itref{x0}--\itref{x1a} on conditions
    \eqref{dagign0}--\eqref{dagign3}.  We can then combine these two
    distinct interpretations of independence within the same
    application, as we do here.  For a rigorous analysis see
    \textcite{pc/apd:eci}.}
\end{rem}

In the light of \remref{varind}, we shall in the sequel treat $F_0$
and $F_1$ as stochastic variables, having the independence property
\eqref{varind}.\\

\begin{proof0}{\lemref{stardag}}  
  Taking the variables in the order
  $F_0, F_1, X_0^*,X_0,H,Z,X_1^*,X_1,Y$, we need to show the following
  series of properties, where each asserts the independence of a
  variable from its predecessors, conditional on its parents in the
  graph.
  \begin{eqnarray}    
    \label{eq:s0}
    {F_1} &\cip& {F_0}\\
    \label{eq:s1}
    X_0^* &\cip& (F_0,F_1)\\
    \label{eq:s2}
    X_0  &\cip& F_1 \mid (X_0^*, F_0)\\
    \label{eq:s3}
    H  &\cip& (F_0, F_1,X_0^*,X_0)\\
    \label{eq:s4}
    Z  &\cip& (F_0, F_1, X_0^*) \mid (X_0, H)\\
    \label{eq:s5}
    X_1^* &\cip& (F_0, F_1, X_0^*, X_0) \mid (H,Z)\\
    \label{eq:s6}
    X_1  &\cip& (F_0, X_0^*, X_0, H,Z) \mid (X_1^*, F_1)\\
    \label{eq:s7}
    Y  &\cip&  (F_0,F_1, X_0^*, X_0, H,X_1^*) \mid (Z,X_1).
  \end{eqnarray}
  On excluding \eqref{s0}, these conclusions will comprise the desired
  result.
  
  \begin{description}
  \item[For \eqref{s0}:] By assumption \eqref{varind}.
  \item[For \eqref{s1}:] By \eqref{dagign0}.
  \item[For \eqref{s2}:] Follows trivially since $X_0$, being
    functionally determined by $(X_0^*, F_0)$, has a conditional
    one-point distribution, and so is independent of anything else.
  \item[For \eqref{s3}--\eqref{s5}:] 
    From \eqref{dagign1} we have
    \begin{equation}
      \label{eq:pp1}
      \ind {(H,Z,X_1^*)} {F_0} {(F_1,X_0)}
    \end{equation}
    while from  \eqref{dagign2} we have
    \begin{equation}
      \label{eq:pp2}
      \ind {(H,Z,X_1^*)} {F_1} {(F_0,X_0)}.
    \end{equation}
    We now wish to show that \eqref{pp1} and \eqref{pp2} imply
    \begin{equation}
      \label{eq:pp3}
      \ind {(H,Z,X_1^*)} {(F_0,F_1)} {X_0}.
        \end{equation}
        This requires some caution, on account of \remref{altalg}.  To
        proceed we use the fictitious independence property
        \eqref{varind}.

        From \eqref{dagign2} we have $\ind {X_0} {F_1} {F_0}$, which
        together with \eqref{varind} yields $\indo {F_1} {(F_0,X_0)}$,
        so that
        \begin{equation}
          \label{eq:fx}
          \ind  {F_1} {F_0} {X_0}.
        \end{equation}
        Combining \eqref{pp1} and \eqref{fx} yields
        \begin{math}
          \ind {(F_1,H,Z,X_1^*)} {F_0} {X_0}
        \end{math}
        whence
        \begin{equation}
          \label{eq:fw}
           \ind {(H,Z,X_1^*)} {F_0} {X_0}.
        \end{equation}
        Finally, combining \eqref{fw} and \eqref{pp2} yields
        \eqref{pp3}.
    
        Now \eqref{pp3} asserts that the conditional distribution of
        $(H,Z,X_1^*)$ given $X_0$ is the same in all regimes.  In
        particular (noting that $X_1^*=X_1$ in the observational
        regime), that conditional distribution inherits the
        independencies of \figref{pearlswiga}.  Properties
        \eqref{s3}--\eqref{s5} follow (on noting that $X_0$, being a
        function of $F_0$ and $X_0^*$, is redundant in \eqref{s3} and
        \eqref{s5}).

  \item[For \eqref{s6}:]  Trivial since $X_1$ is
    functionally determined by $(F_1,X_1^*)$.
  \item[For \eqref{s7}:] From \eqref{dagign1} we derive both
    \begin{eqnarray}
      \label{eq:p16}
       Y &\cip& F_0 \mid (F_1,X_0,H,Z,X_1)\\
        \label{eq:p17}
       Y &\cip& X_0^* \mid (F_0,F_1, X_0, H,Z,X_1^*,X_1).             
     \end{eqnarray}
     while from \eqref{dagign3} we have
     \begin{eqnarray}
       \label{eq:p18}
       Y &\cip& F_1 \mid (F_0,X_0,H,Z,X_1)\\
        \label{eq:p18a}
       Y &\cip& X_1^* \mid (F_0,F_1,X_0,H,Z,X_1).
     \end{eqnarray}
     
    We first want to show that \eqref{p16} and \eqref{p18} are
    together equivalent to
     \begin{equation}
       \label{eq:p19}
      \ind Y  {(F_0,F_1)}  {(X_0,H,Z,X_1)}.
    \end{equation}
    To work towards this, we note that, by \eqref{dagign1},
    \begin{math}
      \ind {(H,Z,X_1)} {F_0} {(F_1,X_0)},
    \end{math}
    which together with \eqref{fx} gives
    \begin{math}
      \ind {(F_1,H,Z,X_1)} {F_0} {X_0},
    \end{math}
    whence
    \begin{equation}
      \label{eq:ff}
      \ind {F_0} {F_1}  {(X_0,H,Z,X_1)}.
    \end{equation}
    Then \eqref{p19} follows from \eqref{p16}, \eqref{p18} and
    \eqref{ff} in parallel to the argument above from \eqref{pp1},
    \eqref{pp2} and \eqref{fx} to \eqref{pp3}.
    
    Now in the observational regime, $\ind Y {(X_0,H)}{(Z,X_1)}$.  By
    \eqref{p19}, this must hold in all regimes.  This gives
    \begin{equation}
       \label{eq:p20}
       \ind Y  {(F_0,F_1,X_0,H)} {(Z,X_1)}.
     \end{equation}
     Properties \eqref{p18a} and \eqref{p20} are together equivalent
     to
     \begin{equation}
       \label{eq:p21}
       \ind Y  {(F_0,F_1,X_0,X_1^*,H)}  {(Z,X_1)}.
     \end{equation}
     Combining \eqref{p21} with \eqref{p17} now yields \eqref{s7}.
  \end{description}
\end{proof0}

\paragraph{Augmented DAG} Finally, having derived \figref{pearlstarb}
from assumptions \eqref{dagign0}--\eqref{dagign3}, we can eliminate
$X_0^*$ and $X_1^*$ from it.  The relationships between the domain and
regime variables are then represented by the augmented DAG of
\figref{pearlstara}, which can now be used to express and manipulate
causal properties of the system, without further explicit
consideration of the ITT variables---such consideration only being
required in making the argument to justify this use.

\subsection{General DAG}
\label{sec:daggen}
The case of a general DAG follows by extension of the arguments of
\secref{dagex} above.  Consider a set of domain variables, with
observational independencies represented by a DAG $\dag$.  We consider
the variables in some total ordering consistent with the partial order
of the DAG.

Some of the variables, say (in order) $(X_i:i=1,\ldots,k)$, will be
potential targets for intervention, with associated intention-to-treat
variables $(X_i^*)$ and intervention indicator variables ($F_i$).  Let
$V_i$ denote the set of all the domain variables coming between
$X_{i-1}$ and $X_i$ in the order.  We thus have an ordered list
$L = (V_1,X_1,\ldots, V_k, X_k, V_{k+1})$ of domain
variables, some of which are possible targets for intervention.

Let $\pre_i$ denote the set of all predecessors of $X_i$ in $L$,
including $X_i$, and $\suc_i$ the set of all successors of $X_i$,
excluding $X_i$.  By $\pre_i^*$ we understand the set where all action
variables in $\pre_i$ are replaced by their associated
intention-to-treat variables, and similarly for $\suc_i^*$.  Also
$F_{i:j}$ will denote $(F_i,\ldots,F_j)$, and similarly for other
variables%
.

Generalising \eqref{dagign0} with \eqref{dagign1a}, or
\eqref{dagign2a} with \eqref{dagign3}, and with similar motivation, we
introduce the following assumptions (noting that $B_i$ expresses a
strong ignorability property for the effects of all the variables
$(X_1,\ldots,X_i)$ on later variables---which would need
correspondingly strong justification in any specific application):

\begin{eqnarray}
  \label{eq:Ai}
  A_i&:&\qquad \pre_i^*\, \cip \, F_{i:k} \mid F_{1:i-1}\\
  \label{eq:Bi}
  B_i&:&\qquad \suc_i^* \,\cip\, (F_{1:i}, X_{1:i}^*) \mid (F_{i+1:k},\,\pre_i).
\end{eqnarray}

Taking account of the fact that $X_i$ is determined by $(F_i,X_i^*)$,
these are equivalent to:

\begin{eqnarray}
  \label{eq:Ai'}
  A_i'&:&\qquad (V_{1:i},X_{1:i}^*,X_{1:i-1})\, \cip \, F_{i:k} \mid F_{1:i-1}\\
  \label{eq:Bi'}
  B_i'&:&\qquad (V_{i+1:k},X_{i+1:k}^*,X_{i+1:k}) \,\cip\,  (F_{1:i}, X_{1:i}^*)
         \mid (F_{i+1:k},V_{1:i},X_{1:i}).
\end{eqnarray}

\begin{theorem}
  \label{thm:stardaggen}
  Suppose the observational conditional independencies are represented
  by a DAG $\dag$, and that assumptions $A_i$ and $B_i$
  ($i=1,\ldots,k$) hold.  Then the extended conditional independencies
  between domain variables, intention-to-treat variables, and regime
  variables (conditional on the regime variables) are represented by
  the ITT DAG $\dag^*$, constructed by modifying $\dag$ as follows:

  \begin{itemize}
  \item Each action variable $X_i$ is replaced by the trio of
    variables $F_i$, $X_i^*$ and $X_i$, with arrows from $F_i$ and
    $X_i^*$ to $X_i$.  It is assumed that \eqref{daginter} holds.
  \item $F_i$ is a founder node.
  \item $X_i^*$ inherits all the original incoming arrows of $X_i$.
  \item $X_i$ loses its original incoming arrows, but retains its
    original outgoing arrows.
  \end{itemize}
\end{theorem}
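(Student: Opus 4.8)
The plan is to mirror the proof of \lemref{stardag}, of which this is the $k$-fold generalisation, by checking that $\dag^*$ satisfies the recursive local Markov property relative to the topological order
\[
F_1,\ldots,F_k,\;V_1,\,X_1^*,\,X_1,\;V_2,\,X_2^*,\,X_2,\;\ldots,\;V_k,\,X_k^*,\,X_k,\;V_{k+1}.
\]
Since the extended conditional independencies carried by a DAG are exactly those obtained by requiring each node to be independent of its predecessors given its parents (the recursive factorisation used in \lemref{stardag}), it suffices to verify this node by node. As there, and as justified by \remref{varind}, I would treat $F_1,\ldots,F_k$ as stochastic and mutually independent, prove the result in that enlarged setting, and recover the genuine non-stochastic statement by conditioning on the $F_i$; this mutual independence is the generalisation of the fictitious property \eqref{varind} that the combination steps below cannot do without.

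The easy nodes settle themselves. Each founder $F_i$ has no parents, and the required $\indo {F_i} {F_{1:i-1}}$ is precisely mutual independence of the regime indicators. Each action variable $X_i$ is functionally determined by its parents $(F_i,X_i^*)$ through \eqref{daginter}, hence has a one-point conditional law and is trivially independent of everything else given those parents. This leaves the substantive nodes: the intention-to-treat variables $X_i^*$ and the domain variables comprising the blocks $V_1,\ldots,V_{k+1}$. For each such node $W$, its parents in $\dag^*$ coincide with its parents in $\dag$ (because $X_i^*$ inherits the incoming arrows of $X_i$, while $X_i$ keeps its outgoing arrows), so the target is $W \cip (\text{predecessors of } W) \mid {\rm pa}_{\dag}(W)$, now with the $F_i$ counted among the predecessors.

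For these nodes I would factor the argument into two ingredients, exactly as \eqref{pp3} and \eqref{p19} served in \lemref{stardag}. First, \emph{regime-invariance}: proceeding by induction on $i$, I would show that the conditional law of each successor block given its predecessor set $\pre_i$ does not depend on any regime indicator (the analogues of \eqref{pp3} for the intermediate blocks, and of \eqref{p19}--\eqref{p20} for the final block containing the response). Invariance with respect to a single $F_j$ comes directly from $A_i'$, which governs how a pre-block relates to current and future interventions, and from $B_i'$, the strong ignorability of the $i$-th intervention on later variables---which, by \corref{aaa}, already folds in distributional consistency. Second, \emph{observational factorisation}: in the idle regime every $X_i^*=X_i$, so each block inherits precisely the conditional independencies of $\dag$; combining this with regime-invariance transports those independencies to all regimes, and that is exactly the local Markov property demanded of each $X_i^*$ and each $V$-variable.

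The main obstacle is this gluing step, and it is what stops the theorem from being a triviality. As \remref{altalg} warns, one may \emph{not} combine invariance under $F_0$ with invariance under $F_1$ into joint invariance by a naive intersection argument; the per-$F_j$ statements must be welded together through the mutual (variation) independence of the regime indicators, peeling off one $F_j$ at a time by contraction (property P5)---precisely the device that carries \eqref{pp1}, \eqref{pp2} and \eqref{fx} to \eqref{pp3}, and \eqref{p16}, \eqref{p18} and \eqref{ff} to \eqref{p19}. Carrying this induction through for general $k$, while keeping the conditioning sets $\pre_i$ and $\suc_i^*$ and the equivalences $A_i\!\Leftrightarrow\!A_i'$, $B_i\!\Leftrightarrow\!B_i'$ correctly aligned, is the principal bookkeeping burden; once the regime-invariance relations are in hand, reading off the \eqref{s0}-type local properties for every node of $\dag^*$ is routine.
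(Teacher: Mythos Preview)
Your proposal is correct and follows essentially the same route as the paper. The paper's proof likewise adopts the fictitious mutual independence $\CIP_{i=1}^k F_i$, verifies the local Markov property for $\dag^*$ in the same topological order you give, disposes of the $F_i$ and $X_i$ nodes trivially, and then isolates the substantive work into two lemmas: first an inductive statement $H_r:\ \ind{F_{r+1:k}}{F_{1:r}}{W_{1:r}}$ (the gluing device you describe, peeled off one regime indicator at a time via P5 and $A_i'$, $B_i'$), and then $\ind{(V_{r+1},X_{r+1}^*)}{(F_{1:k},X_{1:r}^*)}{(V_{1:r},X_{1:r})}$, which is precisely your ``regime-invariance'' step and feeds directly into the observational-factorisation argument you outline.
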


\begin{proof}
  See \appref{thmproof}.
\end{proof}

Finally, on eliminating the intention-to-treat nodes $(X_i^*)$ from
the ITT DAG, the relationships between the domain variables and regime
variables are represented by the augmented DAG $\dag^\dagger$,
constructed from $\dag$ by adding, for each $X_i$, $F_i$ as a founder
node, with an arrow from $F_i$ to $X_i$.  As described in \secref{dt},
such an augmented DAG is all we need to represent and manipulate
causal properties.  The above argument shows what needs to be
assumed---and, more important, justified---to validate its use.

\section{Comparison with other approaches}
\label{sec:comparison}
In this section we explore some of the similarities and differences
between the decision-theoretic approach to statistical causality,
considered above, and other currently popular approaches.

\subsection{Potential outcomes}
\label{sec:po}
In the potential outcome (PO) formulation of statistical causality
\cite{dbr:jep,dbr:as}, the conception is that (for a generic
individual) there exist, simultaneously and before the application of
any treatment, two variables, $Y(0)$ and $Y(1)$: $Y(t)$ represents the
individuals's {\em potential response\/} to the (actual or
hypothetical) application of treatment $t$.  If treatment $1$ (resp.,
$0$) is in fact applied, the corresonding potential outcome $Y(1)$
(resp., $Y(0)$) will be uncovered and so rendered actual, the observed
response then being $Y = Y(1)$ (resp., $Y = Y(0)$); however the
alternative, counterfactual, potential outcome $Y(0)$ (resp., $Y(1)$)
will remain forever unobserved---a feature which \textcite{pwh:jasa}
has termed the {\em fundamental problem of causal inference\/},
although it is not truly fundamental, but rather an artefact of the
unnecessarily complicated PO approach.

The pair $(Y(1), Y(0))$ is supposed to have (jointly with the other
variables in the problem) a bivariate distribution, common for all
individuals---this might be regarded as generated from an assumption
of exchangeability of the pairs $(Y_i(1),Y_i(0))$ across all
individuals $i\in {\cal I}$.  The marginal distribution of $Y(t)$ can
be identified with our hypothetical distribution $P_t$ for the
(single) response variable $Y$ under hypothesised application of
treatment $t$, and is thus estimable from suitable experimental data.
However, on account of the fundamental problem of causal inference no
empirical information is obtainable about the dependence between
$Y(0)$ and $Y(1)$, which can never be simultaneously observed.

\paragraph{Causal effect}
If I (individual $0$) consider taking treatment $1$ [resp., $0$], I
would then be looking forward to obtaining response $Y_0(1)$ [resp.,
$Y_0(0)$].  Causal interest, and inference, will thus centre on a
suitable comparison between the two potential responses.  The PO
approach typically focuses on the ``individual causal effect'',
$\ice := Y(1)-Y(0)$.  However, again on account of the fundamental
problem of causal inference, $\ice$ is never directly observable, and
even its distribution can not be estimated from data except by making
arbitrary and untestable assumptions (\eg, that $Y(1)$ and $Y(0)$ are
independent, or alternatively---''treatment-unit additivity,
TUA''---that they differ by a non-random constant).  For this reason
attention is typically diverted to the {\em average causal effect\/},
$\ace := \E(\ice)$.  Since this can be re-expressed as
$\E \{Y(1)\}-\E \{Y(0)\}$, and the individual expectations are
estimable, so is $\ace$: indeed, although based on a different
interpretation and expressed in different notation, it is essentially
the same as our own definition \eqref{ace} of $\ace$, which was
introduced as one form of comparison between the two {\em
  distributions\/}, $P_1$ and $P_0$, for the single response
$Y$---rather than, as in the PO approach, an estimable distributional
feature of the non-estimable comparison $\ice$ between the two {\em
  variables\/} $Y(1)$ and $Y(0)$.

\paragraph{Consistency}
In the PO approach, {\em consistency\/} refers to the property
\begin{equation}
\label{eq:cons}
Y = Y(T),
\end{equation}
requiring that the response $Y$ should be obtainable by revealing the
potential response corresponding to the received treatment $T$.  We
can distinguish two aspects to this:
\begin{enumerate}
\item \label{it:cons1}When considered only in the context of an
  interventional regime $F_T=t$, \eqref{cons} can be regarded as
  essentially a book-keeping device, since $Y(t)$ is {\em defined\/}
  as what would be observed if treatment $t$ were applied.
\item \label{it:cons2}But when it is understood as applying also in
  the observational regime, \eqref{cons} has more bite, requiring that
  an individual's response to received treatment $T$ should not depend
  on whether that treatment was applied by a (real or hypothetical)
  extraneous intervention, or, in the observational setting, by some
  unknown internal process.  It is thus a not entirely trivial
  modularity assumption, forming the essential link between the
  observational and interventional regimes.
\end{enumerate}

A parallel to aspect \itref{cons1} in DT is the {\em temporal
  coherence\/} assumption appearing in \fnref{coherence}: this
requires that uncertainty about the outcome $Y$, after it is known
that treatment $t$ has been applied, should be the same as the initial
uncertainty about $Y$, on the hypothesis that treatment $t$ will be
applied.  While not entirely vacuous, this too could be considered as
little more than book-keeping.

More closely aligned with aspect \itref{cons2} is the distributional
consistency property expressed in \eqref{idledist}, which says that,
for purposes of assessing the uncertainty about the response to a
treatment $t$, the {\em only\/} difference between the interventional
and the observational regime is that, in the latter, we have the
additional information that the individual had been fingered to
receive $t$.  Again this has some empirical bite, and can be regarded
as a not entirely trivial condition linking the observational and
interventional regimes in the DT approach.

\paragraph{Treatment assignment and application}
We have emphasised the distinction between the stochastic treatment
assignment variable $T^*$ and the non-stochastic treatment application
indicator $\check T$.  This is not explicitly done in the PO approach,
but appears implicitly, since for any data individual, with fingered
(and thus also actual) treatment $T^*$ (typically just denoted by $T$
in PO), we can distinguish between the actual response $Y = Y(T)$ in
the observational regime, and the potential responses $Y(1)$ and
$Y(0)$, relevant to the two interventional regimes.

We can make the following correspondences:
\begin{table}[h]
  \centering
  \begin{tabular}{rll}
    & PO & DT\\\hline
    (i) & Distribution of $Y(t)$  & Distribution of $Y$ given $\check T = t$\\
    (ii) & Joint  distribution of $(Y(0), Y(1))$  & no parallel\\ 
    (iii) & Distribution of $Y$ given $T = t$  & Distribution of $Y$ given $T^*=t, \check T = t$\\
    (iv) & $\indo {Y(t)} T$ \quad ($t=0,1$) & $\ind Y {T^*} {\check T}$\\
    (v) & $\indo {(Y(0), Y(1))} T$ & no parallel\\
    (vi) & $\ind {Y(t)} T X$ \quad ($t=0,1$) & $\ind Y {T^*} {(X, \check T)}$\\
    (vii) & $\ind {(Y(0), Y(1))} T X$ & no parallel  
  \end{tabular}
  \caption{Comparison of PO and DT approaches}
  \label{tab:comppo}
\end{table}

\paragraph{Ignorability}
The PO expressions in (iv) and (v) of \tabref{comppo} have both been
used to express ignorability in the PO framework, (iv) evidently being
weaker than (v).  The {\em weak ignorability\/} condition (iv)
corresponds directly to the DT condition \eqref{indys_t} for
ignorability.  However, the {\em strong ignorability\/} condition (v)
has no DT parallel, since nothing in DT corresponds to a joint
distribution of $(Y(0), Y(1))$.  For applications weak ignorability
(iv), which does have a DT interpretation, suffices.  Similar remarks
apply to the (weak and strong) {\em conditional ignorability\/}
expressions in (vi) and (vii).

\paragraph{SUTVA and SUTDA} It is common in PO to impose the {\em
  Stable Unit-Treatment Value Assumption (SUTVA)\/}
\cite{drb:basu,drb:ifs}.  This requires that, for any individual $i$,
the potential response $Y_i(t)$ to application of treatment $t$ to
that individual should be unaffected by the treatments applied to
other individuals.  Indeed, without such an assumption the notation
$Y_i(t)$ becomes meaningless, since the very concept intended by it is
denied.

Our variant of SUTVA is the {\em Stable Unit-Treatment Distribution
  Assumption (SUTDA)\/}, as described in \condref{sutda}.  (Note that,
unlike for SUTVA, even when this assumption fails it does not
degenerate into meaninglessness, since the terms in it have
interpretations independent of its truth).  On making the further
assumption, implicit in the PO approach, that, not just the set of
values, but also the joint distribution, of the collection
$\{Y_i(t): i\in{\cal I}, t\in{\cal T}\}$ is unaffected by the
application of treatments, it is easily seen that SUTVA implies SUTDA,
so that our condition is weaker---and is sufficient for causal
inference.

\subsection{Pearlian DAGs}
\label{sec:pearl}

Judea Pearl has popularised graphical representations of causal
systems based on DAGs.  In \S1.3 of \textcite{pearl:book} he describes
what he terms a ``Causal Bayesian Network'' (CBN), which we shall call
a ``Pearlian DAG''.\footnote{We avoid the term ``causal DAG'', which
  has been used with a variety of different interpretations
  \cite{apd:beware}.}  This is intended to represent both the
conditional independencies between variables in observational
circumstances, and how their joint distributions changes when
interventions are made on some or all of the variables: specifically,
for any node not directly intervened on, its conditional distribution
given its parents is supposed the same, no matter what other
interventions are made.\footnote{In the greater part of his causal
  writings, Pearl uses a different construction, in which all
  stochasticity is confined to unobservable ``error variables'', with
  domain variables related to these, and to each other, by
  deterministic functional relationships---he misleadingly terms this
  deterministic structure a ``probabilistic causal model'' (PCM).  It
  is easy to show \cite{apd:infdiags} that there is a many-one
  correspondence: any PCM implies a CBN structure for its domain
  variables, while any CBN can be derived from a, typically
  non-unique, PCM.  Since the additional, unidentifiable, structure
  embodied in a PCM has no consequences for its use for
  decision-theoretic purposes, we do not consider these further here.}
The semantics of a Pearlian DAG representation is in fact identical
with that, based entirely on \mbox{$d$-separation}, of the fully
augmented observational DAG, in which every observable domain variable
is accompanied by a regime indicator---thus allowing for the
possibility of intervention on every such variable.  However, although
Pearl has occasionally included these regime indicators explicitly, as
do we, for the most part he uses a representation where they are left
implicit and omitted from the graph.  A Pearlian DAG then looks,
confusingly, exactly like the observational DAG, with its conditional
independendies, but is intended to represent additional causal
properties: properties that are {\em explicitly\/} represented by the
corresponding augmented DAG.

Since such a Pearlian DAG is just an alternative representation of a
particular kind of augmented DAG, its appropriateness must once again
depend on the acceptability of the strong assumptions, described in
\secref{daggen}, needed to justify augmentation of an observational
DAG.

\subsection{SWIGs}
\label{sec:swigs}

\citet{Richardson_primer,Richardson_singleworld} introduce a different
graphical representation of causal problems, the SWIG (single world
intervention graph).  A salient feature of this approach is
``node-splitting'', whereby a variable is represented twice: once as
it appears naturally, and again as it responds to as intervention.
Although the details of their representation and ours differ, they are
based on similar considerations.  Here we consider some of the
parallels and differences between the two approaches.

Figure~3 of \citet{Richardson_primer} (a single world intervention
template, SWIT) is reproduced here as \figref{basicswig}, with
notation changed so as more closely to match our own.  Note the
splitting of the treatment node $T$.  As we shall see, this graph
encodes ignorability of the treatment assignment, and can thus be
compared with our own representations of ignorability.
\begin{figure}[htbp]
  \begin{center}
    \resizebox{2in}{!}{\includegraphics{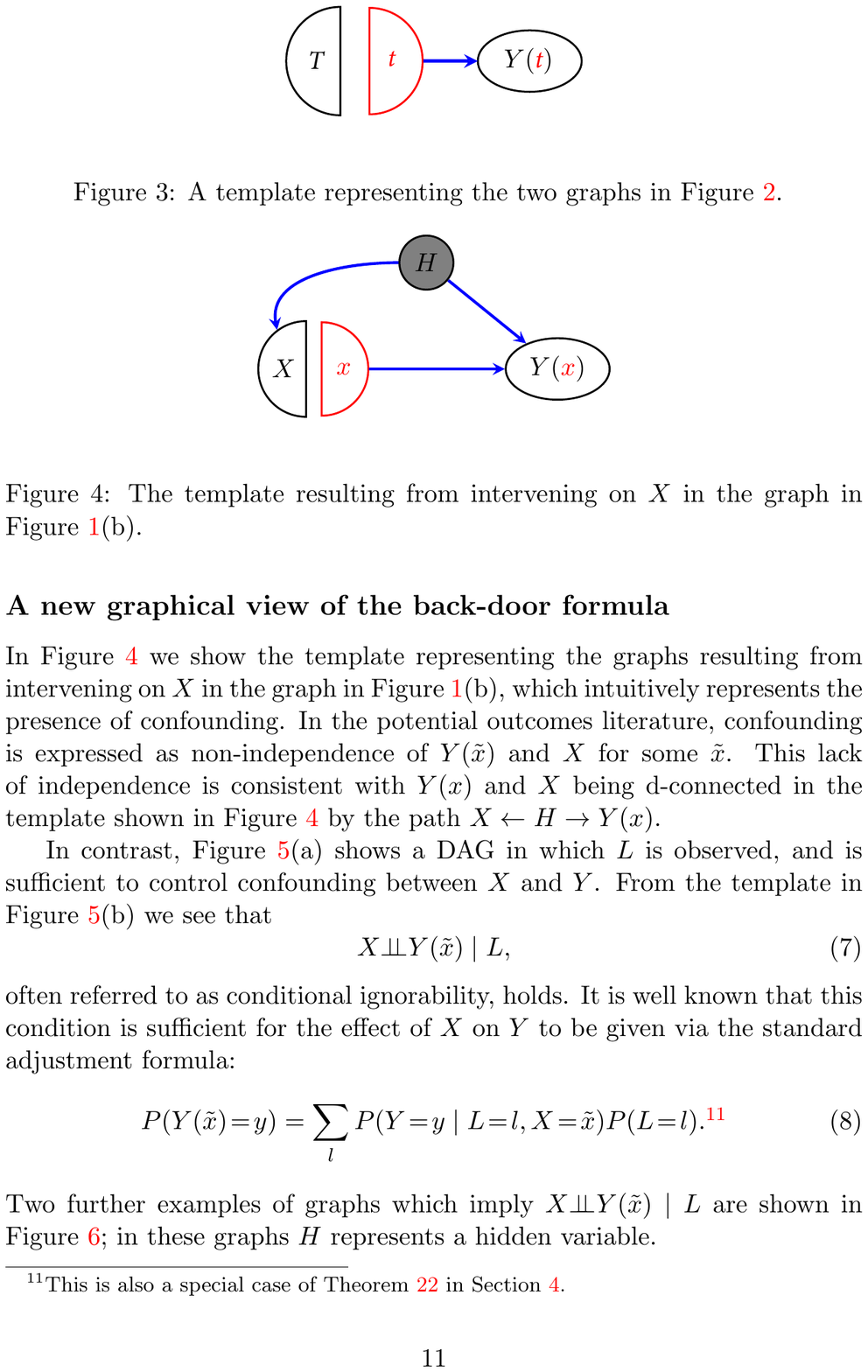}}
    \caption{Simple SWIG template, expressing PO (weak) ignorability}
    \label{fig:basicswig}
  \end{center}
\end{figure}

In \figref{basicswig}, $T$ denotes the treatment applied in the
observational regime: it thus corresponds to our
``intention-to-treat'' variable $T^*$.  The node labelled $t$
represents an intervention to set the treatment to $t$: it therefore
corresponds to $\check T=t$ in our development.  The variable $Y(t)$,
the ``potential response'' to the intervention at $t$, has no direct
analogue in our approach, but that is inessential, since only its
distribution is relevant; and that corresponds to our distribution
$P_t$ of $Y$ in response to the intervention $\check T=t$.

Applying the standard $d$-separation semantics to \figref{basicswig}
(ignoring the unconventional shapes of some of the nodes), the
disconnect between $T$ and $t$ represents their independence.  This
corresponds to our equation~\eqref{indst}, encapsulating the covariate
nature of $T^*$.  Further, by the lack of an arrow from $T$ to $Y(t)$,
the graph encodes $\indo {Y(t)} T$, which is to say that the
distribution of $Y(t)$---the outcome consequent on a (real or
hypothetised) intervention at $t$---is regarded as independent of the
intention-to-treat variable (and this property should hold for all
$t$).  In our notation, this becomes $\ind Y {T^*} {\check T}$, as
expressed in our equation \eqref{indys_t}, and represents ignorability
of the treatment assignment.  As described in \secref{ignore}, in our
treatment this can be represented by the DAG of
\figref{igndag}---which is therefore our translation of the SWIT of
\figref{basicswig}, conveying essentially the same information in a
different form.

Note that, in the approach of \citet{Richardson_primer}, in order
fully to capitalise on the ignorability preperty represented by
\figref{basicswig}, additional external use must be made of the
assumption of (functional) consistency: $T = t$ implies $Y(t) = Y$.
For example, in this approach the average causal effect \ace is
defined as $\E\{Y(0)-Y(1)\}$.  Now by ignorability, as represented in
the SWIT of \figref{basicswig}, $\indo {Y(t)} T$, whence
$\E\{Y(t)\} = \E\{Y(t)\mid T=t\}$.  But we then need to make further
use of functional consistency to replace this by $\E\{Y \mid T=t\}$,
so obtaining $\ace = \E\{Y \mid T=1\}-\E\{Y \mid T=0\}$.

Our analogue of functional consistency is distributional consistency
(\defref{distcons}):
\begin{math}
  Y \cd (T=t, F_T = \idle) \,\approx\, Y\cd (T^*=t, F_T = t).
\end{math}
However, this property has already been used in justifying the
representation by means of \figref{igndag}.  Once that graph is
constructed, distributional consistency does not require further
explicit attention since, as shown in \remref{ok}, it is already
represented in \figref{basic}, and thus in \figref{igndag}.  And then
\figref{collign} can be used directly to represent and manipulate the
fundamental DT representation of ignorability, as expressed by
\eqref{igndag}.  Thus we define
$\ace = \E(Y \mid F_T=1)-\E(Y \mid F_T=0)$.  With ignorability
expressed as $\ind Y {F_T} T$, as encoded in \figref{igndag}, we
immediately have
$\E(Y \mid F_T=t) = \E(Y \mid T=t, F_T=t) = \E(Y \mid T=t)$, and thus
$\ace = \E(Y \mid T=1)-\E(Y \mid T=0)$.

A further conceptual advantage of our approach is that is unnecessary
to consider (even one-at-a-time) the distinct potential
responses\footnote{unhelpfully described as ``counterfactuals'' by
  \citet{Richardson_primer}} $Y(t)$: we have a single response
variable $Y$, but with a distribution that may be regime-dependent.

\section{A comparative study:  $g$-computation}
\label{sec:gcomp}
In this Section we compare, contrast, and finally unify, the various
approaches to causal modelling and inference, in the context of the
specific example of \secref{dagex}.  We suppose we have observational
data, and wish to identify the distribution of $Y$ under interventions
at $X_0$ and $X_1$.  Purely for notational simplicity, we assume all
variables are discrete


\subsection{Pearl's {\em do\/}-calculus}

The {\em do\/}-calculus \cite[\S~3.4]{pearl:book} is a methodology for
discovering when and how, for a problem represented by a specified
Pearlian DAG, it is possible to use observational information to
identify an interventional distribution.  Notation such as
$p(x \mid y, \widehat z)$ refers to the distribution of $X$ given the
observation $Y=y$, when $Z$ is set by intervention to $z$.  Pearl
gives 3 rules, based on interrogation of the DAG, that allow
transformation of such expressions.  If by successive application of
these rules we can re-express our desired interventional target by a
hatless expression, we are done.

In this notation, we would like to identify
$p(y \mid \widehat x_0, \widehat x_1)$.  We can write 
\begin{equation}
  \label{eq:dodyn}                             
  p(y \mid \widehat x_0,\widehat x_1)  =\sum_z p(y \mid \widehat x_0,\widehat x_1,z)\times p(z \mid  \widehat x_0, \widehat x_1).                            
\end{equation}
 
According to Pearl's Rule~2, we have
\begin{equation}
  \label{eq:rule2}
  p(y \mid \widehat x_0,\widehat x_1,z) = p(y \mid x_0, x_1,z)  
\end{equation}
because $Y$ is $d$-separated from $(X_0,X_1)$ by $Z$ in the DAG of
\figref{pearlswiga} modified by deleting the arrows out of $X_0$ and
$X_1$.

Next, again by Rule~2, we can show
\begin{equation}
  \label{eq:rule2b}
  p(z \mid \widehat x_0, \widehat x_1) = p(z \mid x_0, \widehat x_1)  
\end{equation}
by seeing that $Z$ is $d$-separated from $X_0$ by $X_1$ in the DAG
modified by the deleting arrows into $X_1$ and out of $X_0$.

Finally, by Rule~3, we confirm
\begin{equation}
  \label{eq:rule3}
  p(z \mid x_0, \widehat x_1) = p(z \mid x_0)  
\end{equation}
because $Z$ is $d$-separated from $X_1$ by $X_0$ in the DAG with
arrows into $X_1$ removed.  So on combining \eqref{rule2b} and \eqref{rule3}
we have shown
\begin{equation}
  \label{eq:rules}
  p(z \mid \widehat x_0, \widehat x_1) = p(z \mid x_0).  
\end{equation}
Inserting \eqref{rule2} and \eqref{rules} into \eqref{dodyn}, we conclude
\begin{equation}
  \label{eq:gcomp}
  p(y \mid \widehat x_0,\widehat x_1)  = \sum_z p(y \mid  x_1, z) \times p(z \mid x_0),
\end{equation}
showing that the desired interventional distribution can be
constructed from ingredients identifiable in the observational regime.
Equation~\eqref{gcomp} is (a simple case of) the {\em
  $g$-computation\/} formula of \textcite{jr:mm}.

\subsection{DT approach}
\label{sec:dtgcomp}

As described in \textcite{apd:annrev}, the DT approach supplies a more
straightforward way of justifying and implementing {\em
  do\/}-calculus, using the augmented DAG.  In our problem this is
\figref{pearlstara}, and what we want is
$p(Y=y \mid F_0=x_0,F_1=x_1)$.

Noting $F_0=x_0 \Rightarrow X_0=x_0$ \etc, in general we have:
\begin{eqnarray}
  \nonumber
  p(Y=y \mid F_0=x_0,F_1=x_1)  &=& \sum_z p(Y=y \mid X_0=x_0,X_1=x_1,Z=z,F_0=x_0,F_1=x_1)\\
  \label{eq:gendyn}                              &&{}\times p(Z=z \mid  X_0=x_0, F_0=x_0,F_1=x_1).
\end{eqnarray}

Applying $d$-separation to \figref{pearlstara}, we can infer the
following conditional independencies:
\begin{eqnarray}
    \label{eq:rr6}
  Y &\cip& {(F_0,X_0,F_1)} \mid (Z, X_1)\\
  Z&\cip& (F_0 , F_1) \mid  X_0.
\end{eqnarray}
Using these in \eqref{gendyn} we obtain
\begin{eqnarray}
  \nonumber
  p(Y=y \mid F_0=x_0,F_1=x_1)  &=& \sum_z p(Y=y \mid  X_1=x_1, Z=z,F_0=\idle, F_1=\idle)\\
  \label{eq:gcompdt}
                               &&{}\times p(Z=z \mid X_0=x_0,F_0=\idle,F_1=\idle),
\end{eqnarray}
which is \eqref{gcomp}, re-expressed in DT notation.

\subsection{PO approach}
\label{sec:pogcomp}
The Pearlian/DT approach makes no use of potential outcomes.  By
contrast, these are fundamental to the original approach of Robins,
where the conditions supporting $g$-computation are:
  \begin{eqnarray}
    \label{eq:rr1}
    {Y(x_0,x_1)}&\cip&{X_1}\,\mid\,(Z, X_0=x_0)\\
  \label{eq:rr2}
  {Z(x_0)}&\cip& {X_0}.
\end{eqnarray}

In his Example~11.3.3, \textcite{pearl:book}, basing his argument on
his ``twin-network'' construction, claims that \eqref{rr1} can not be
derived from a PO interpretation of \figref{pearlswiga}.  However,
\citet{Richardson_primer} refute this by constructing the SWIT version
of \figref{pearlswiga}, as in \figref{pearlswigb}.
\begin{figure}[htbp]
   \begin{center}
  \includegraphics[width=.4\textwidth]{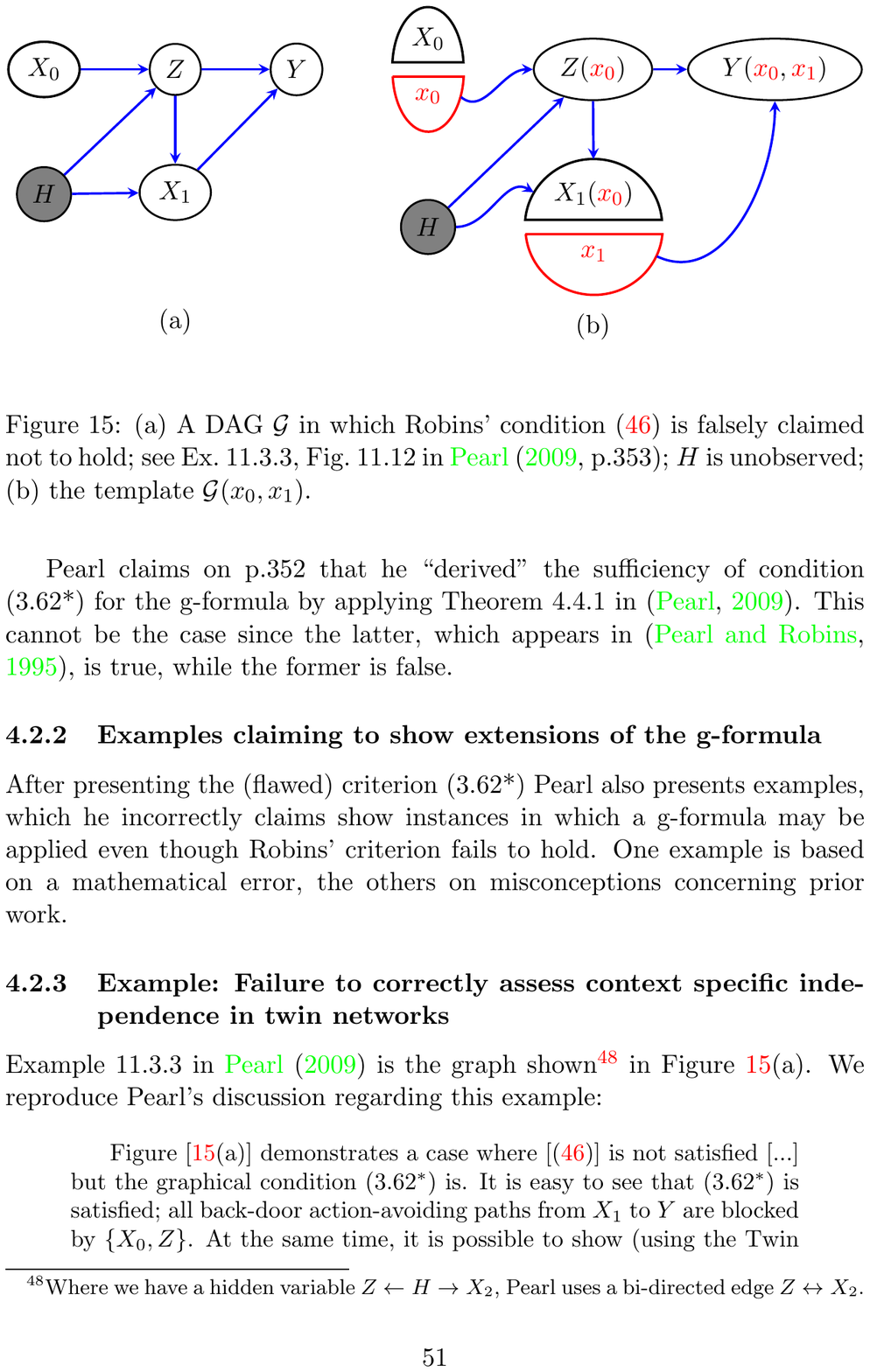}
  \caption{SWIT}
  \label{fig:pearlswigb}
\end{center}
\end{figure}

This DAG encodes the property
\begin{displaymath}
  {Y(x_0,x_1)}\,\cip\,{X_1(x_0)}\,\mid\,(Z(x_0), X_0)
\end{displaymath}
whence
\begin{equation}
  \label{eq:rr3}
  {Y(x_0,x_1)}\,\cip\,{X_1(x_0)}\,\mid\,(Z(x_0), X_0=x_0).
\end{equation}
They then apply functional consistency,
$X_0=x_0 \Rightarrow Z(x_0)=Z, X_1(x_0)= X_1$, to deduce \eqref{rr1}.
As for \eqref{rr2}, this is directly encoded in \figref{pearlswigb}.

\subsection{Unification}
We can use the DT approach to relate all the approaches above.

\subsubsection{DT for SWIG/PO}
\label{sec:dt4po}
\figref{pearlstarb}, using explicit ``intention to treat'' variables
and regime indicators, is the DT reinterpretation of the SWIT of
\figref{pearlswigb}.


From \figref{pearlstarb} (noting that the dotted arrow from $X_1^*$ to
$X_1$ disappears when $F_1 \neq \idle$) we can read off
\begin{displaymath}
  \ind Y {X_1^*} {Z, X_0, F_0, F_1=x_1},
\end{displaymath}
so that 
\begin{equation}
  \label{eq:rr4}
  \ind Y {X_1^*} {Z, X_0=x_0, F_0=\idle, F_1 = x_1},
\end{equation}
which is the DT paraphrase of \eqref{rr1}.  Similarly the DT
paraphrase of \eqref{rr2},
\begin{equation}
  \label{rr5}
  \ind Z {X_0^*} {F_0=x_0},
\end{equation}
is likewise encoded in \figref{pearlstarb}.  (In particular, both
these properties are consequences of our assumptions
\eqref{dagign0}--\eqref{dagign3}, together with \eqref{daginter}.)

\subsubsection{Consistency?}
\label{sec:cons}
Note that the derivations in \secref{dt4po} above do not require
further explicit application of (functional or distributional)
consistency conditions.  We could have complicated the analysis by
mimicking more closely that of \secref{pogcomp}.  The DT paraphrase of
\eqref{rr3}, which can be read off \figref{pearlstarb}, is
\begin{displaymath}
  \ind Y {X_1^*} {Z, X_0^*, F_0 = x_0, F_1 = x_1}.
\end{displaymath}
On restricting to $X_0^*=x_0$ and applying the distributional
consistency condition, we obtain the DT paraphrase of \eqref{rr1}:
\begin{displaymath}
  \ind Y {X_1^*} {Z, X_0=x_0, F_0 = \idle, F_1 = x_1}.
\end{displaymath}
But note that the required distributional consistency property can be
expressed as
\begin{displaymath}
  \ind Y {(X_1^*,F_0)} {(Z, X_0, F_1 = x_1)},
\end{displaymath}
and this is already directly encoded in \figref{pearlstarb}.  That
being the case, we can leave it implicit and shortcut the analysis, as
in \secref{dt4po}

\subsubsection{DT for Pearl}
\label{sec:dt4pearl}
We have shown that, if we can justify the DT ITT representation of
\figref{pearlstarb}, we can derive \eqref{rr1} and \eqref{rr2}, the
conditions used to derive the $g$-computation formula \eqref{gcomp} in
the PO approach.  However, the same end-point can be reached much more
directly.  Extracting from \figref{pearlstarb} the conditional
independencies between just the observable variables and the
intervention indicators (\ie, eliminating $X_0^*$ and $X_1^*$), we
recover \figref{pearlstara}, the DT version of the Pearlian DAG
\figref{pearlswiga}.  From this, as shown in \secref{dtgcomp},
\eqref{gcomp} can readily be deduced directly, without any need to
complicate the analysis by consideration of potential outcomes.  As
described in \secref{conver}, consideration of intention-to-treat
variables is needed to justify the appropriateness of the
augmented/Pearlian DAG of \figref{pearlstara}; but once that has been
done, for further analysis we can simply forget about the ITT
variables $X_0^*$ and $X_1^*$.

\textcite{dd:ss}, \S10.1.1, show how the PO conditions typically
imposed to justify more general forms of $g$-computation imply the
much simpler DT conditions supporting more straightforward
justification.  The DT approach can, moreover, be straightforwardly
extended to allow sequentially dependent randomised interventions,
which can introduce considerable additional complications for the PO
approach.

\section{Discussion}
\label{sec:disc}
In this paper we have developed a clear formalism for problems of
statistical causality, based on the idea that I want to use external
data to assist me in making a decision.  We have shown how this serves
as a firm theoretical foundation for methods framed within the DT
approach, enabling transfer of probabilistic information from an
observational to an interventional setting.  We have emphasised, in
particular, just what considerations are involved---and so what needs
to be argued for---when we invoke enabling assumptions such as
ignorability.  In the course of the development we have introduced DT
analogues of concepts arising in other causal frameworks, including
consistency and the stable unit-treatment value assumption, and
clarified the similarities and differences between the different
approaches.

General though our analysis has been, it could be generalised still
further.  For example, our exchangeability assumptions treat all
individuals on a par.  But we could consider more complex versions of
exchangeability, such as are relevant in experimental designs where we
distinguish various factors which may be crossed or nested
(\textcite{apd:symmods}, \textcite{apd:cinfer} \S~10.1); or conducted
more detailed modelling of non-exchangeable data.  Our analysis of
DAGs in this article has been restricted to non-randomised point
interventions, taking no account of information previously learned.
Further extension would be needed to fully justify, \eg, DT models for
dynamic regimes \cite{dd:ss}.


\newpage

\appendix

\section{Proof of \thmref{stardaggen}}
\label{sec:thmproof}

As in \remref{varind}, and purely as an instrumental tool, we regard
all the regime variables as stochastic and mutually independent:
\begin{equation}
  \label{eq:CIP}
  \CIP_{i=1}^k F_i.
\end{equation}
We shall show that $\dag^*$ then represents the conditional
independencies between all its variables.  The desired result will
then follow on conditioning on $F_{1:k}$.

For economy of notation, we write $W_i$ for $(V_i,X_i)$, $W_{a:b}$ for
$(V_{a:b},X_{a:b})$. and similarly $W_i^*$,  $W_{a:b}^*$.

\begin{lem}
  \label{lem:CIPP}
  For each $r=1,\ldots,k-1$,
  \begin{equation}
    \label{eq:needr}
    H_r:  \ind  {F_{r+1:k}} {F_{1:r}} {W_{1:r}}.
  \end{equation}
\end{lem}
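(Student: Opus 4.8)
The plan is to establish each $H_r$ separately (no induction is needed), treating the regime indicators as stochastic and mutually independent in accordance with \eqref{CIP} and \remref{varind}. The work is done by a single substantive assumption, $A_{r+1}$ --- available precisely because the lemma only claims $r \le k-1$ --- together with the functional determination of the action variables through \eqref{daginter}; the mutual independence \eqref{CIP} supplies the only other ingredient.

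First I would extract from $A_{r+1}$ (equivalently $A_{r+1}'$, \eqref{Ai'}), by the decomposition property P3, the conditional independence
\begin{displaymath}
  \ind {(V_{1:r}, X_{1:r}^*)} {F_{r+1:k}} {F_{1:r}},
\end{displaymath}
discarding the surplus coordinates $(V_{r+1}, X_{r+1}^*)$ that appear on the left of $A_{r+1}$. From \eqref{CIP} I read off the marginal independence $\indo {F_{1:r}} {F_{r+1:k}}$. The contraction property P5 then fuses these into
\begin{displaymath}
  \indo {(F_{1:r}, V_{1:r}, X_{1:r}^*)} {F_{r+1:k}}.
\end{displaymath}
Since by \eqref{daginter} each $X_j$ with $j\le r$, and hence the whole of $W_{1:r} = (V_{1:r}, X_{1:r})$, is a deterministic function of $(F_{1:r}, X_{1:r}^*, V_{1:r})$, applying the weak-union property P4 (with $W_{1:r}$ as that function) and then P3 once more yields $\ind {F_{r+1:k}} {F_{1:r}} {W_{1:r}}$, which is exactly $H_r$.

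The step calling for the most care, and the reason the result is not merely a one-line chaining of independencies, is the fusion of the marginal and the conditional statements: as \remref{altalg} cautions, combining conditional independencies by the naive pattern is invalid in general. Here it is legitimate only because the two statements are presented in exactly the shape demanded by the contraction axiom P5, and because the subsequent collapse onto $W_{1:r}$ rests on the genuine functional relationship \eqref{daginter} rather than on a merely probabilistic one. I would therefore name explicitly which of P3--P5 is used at each stage and check that the left-hand term stays confined to the regime indicators $F_{r+1:k}$ throughout, so that --- as explained in \remref{varind} --- conditioning back on $F_{1:k}$ restores the intended non-stochastic reading.
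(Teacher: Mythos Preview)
Your proof is correct, and in fact streamlines the paper's own argument. The paper proceeds by induction on $r$: for the base case it derives $H_1$ from $A_2'$ and \eqref{CIP} in exactly your manner; for the inductive step it first uses $B_r'$ together with the hypothesis $H_r$ to derive an intermediate statement, and then separately obtains $H_{r+1}$ from $A_{r+2}'$ and \eqref{CIP}. But that second derivation is precisely your direct argument with $r$ shifted by one, and it nowhere uses the intermediate statement or the inductive hypothesis---so the induction, and the appeal to $B_r'$, are redundant. You have correctly spotted that each $H_r$ follows immediately from $A_{r+1}'$ and \eqref{CIP} alone. One small simplification you could make: $A_{r+1}'$ already carries $X_{1:r}$ on its left-hand side, so you may extract $W_{1:r}=(V_{1:r},X_{1:r})$ directly by P3 without passing through $(V_{1:r},X_{1:r}^*)$ and invoking \eqref{daginter}; your route via the functional relation is perfectly valid, just slightly longer. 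Your closing remark about keeping regime indicators on the left of the conditional-independence assertion is apt, though note that $H_r$ itself has $F_{r+1:k}$ on the left---this is acceptable here because the lemma is an intermediate step within the device of \remref{varind}, and the final conclusions of \thmref{stardaggen} restore the required form after conditioning on $F_{1:k}$.
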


\begin{proof}
  We  show \eqref{needr} by induction.

  By \eqref{CIP}, $\indo {F_{1}}{F_{2:k}}$, while by $A_2'$ we have
  $\ind {W_1} {F_{2:k}} {F_1}$.  Together these yield
  $\indo {(F_1, W_1)} {F_{2:k}}$, from which $H_1$ follows.
  
  Suppose now $H_r$ holds.  From $B_r'$ we have
  \begin{equation}
    \label{eq:ah}
    \ind {W_{r+1}} {F_{1:r}} {(F_{r+1:k},W_{1:r})}.
  \end{equation}
  Together with $H_r$ this gives
  \begin{equation}
    \label{eq:al}
    \ind {(F_{r+1:k}, W_{r+1})} {F_{1:r}} {W_{1:r}}
  \end{equation}
  whence 
  \begin{equation}
    \label{eq:am}
    \ind {F_{r+1}}  {F_{1:r}} {(F_{r+2:k},W_{1:r+1})}.
  \end{equation}

  Also, by $A_{r+2}'$,
  \begin{equation}
    \label{eq:fin}
    \ind{W_{1:r+1}} {F_{r+2:k}} {F_{1:r+1}},
  \end{equation}
  which together with $\indo{F_{1:r+1}} {F_{r+2:k}}$, from \eqref{CIP}, gives $\indo {(F_{1:r+1},W_{1:r+1})}  {F_{r+2:k}}$, from which we have
  \begin{equation}
    \label{eq:finn}
    \ind{F_{r+2:k}} {F_{1:r+1}}{W_{1:r+1}}.
  \end{equation}
  So $H_{r+1}$ holds and the induction is established.
\end{proof}

\begin{lem}
  \label{lem:vr}
  For each $r$:
  \begin{equation}
    \label{eq:eachr}
    \ind {(V_{r+1},X_{r+1}^*)} {(F_{1:k},X_{1:r}^*)} {(V_{1:r},X_{1:r})}.
  \end{equation}  
\end{lem}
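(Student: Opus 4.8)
The plan is to split the target \eqref{eachr} into two extended conditional independence statements whose conditioning sets nest, and then glue them together by the contraction axiom P$5$. Throughout I would treat the regime indicators as stochastic and mutually independent, as in \eqref{CIP}, and make free use of P$1$--P$5$; the delicate point, flagged in \remref{altalg}, is that one may not simply ``cancel'' a conditioning variable, so every merge must go through a genuine application of contraction with correctly nested conditioning sets. First I would isolate the contribution of the past intention-to-treat and regime variables: applying decomposition (P$3$) to $B_r'$ (that is, \eqref{Bi'} with index $r$), and noting that $(V_{r+1},X_{r+1}^*)$ is part of its left-hand term, gives
\begin{displaymath}
  \ind {(V_{r+1},X_{r+1}^*)} {(F_{1:r},X_{1:r}^*)} {(F_{r+1:k},W_{1:r})},
\end{displaymath}
which I call (B). This is almost \eqref{eachr}, except that the future regime block $F_{r+1:k}$ sits in the conditioning set rather than on the left.

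Next I would establish the companion statement $\ind {(V_{r+1},X_{r+1}^*)} {F_{r+1:k}} {W_{1:r}}$, which I call (A). Starting from $A_{r+1}'$ (\eqref{Ai'} with index $r+1$), whose left-hand term contains $(V_{r+1},X_{r+1}^*,W_{1:r})$, decomposition (P$3$) followed by weak union (P$4$), moving $W_{1:r}$ into the conditioning set, yields
\begin{displaymath}
  \ind {(V_{r+1},X_{r+1}^*)} {F_{r+1:k}} {(F_{1:r},W_{1:r})}.
\end{displaymath}
This still conditions on the unwanted block $F_{1:r}$. To strip it away I would invoke \lemref{CIPP}, namely $H_r:\ \ind {F_{r+1:k}} {F_{1:r}} {W_{1:r}}$, and combine it with the previous display by contraction (P$5$), taking the common left variable to be $F_{r+1:k}$; a final decomposition then gives (A). This is the step that genuinely uses the mutual independence \eqref{CIP} of the regime indicators (channelled through $H_r$), and it is exactly where a direct cancellation of $F_{1:r}$ would be the fallacy warned against in \remref{altalg}. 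I expect this to be the main obstacle.

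Finally I would assemble (A) and (B) by one application of contraction (P$5$), with common left variable $(V_{r+1},X_{r+1}^*)$, conditioning set $W_{1:r}$, and the two right-hand blocks $F_{r+1:k}$ (from (A)) and $(F_{1:r},X_{1:r}^*)$ (from (B), conditioned additionally on $F_{r+1:k}$). This produces
\begin{displaymath}
  \ind {(V_{r+1},X_{r+1}^*)} {(F_{r+1:k},F_{1:r},X_{1:r}^*)} {W_{1:r}},
\end{displaymath}
and reassembling $(F_{r+1:k},F_{1:r})=F_{1:k}$ gives precisely \eqref{eachr}. The edge cases would be noted separately but follow the same pattern: for $r=k$ the block $F_{r+1:k}$ is empty so (A) is vacuous and the claim reduces to (B), and when $r+1>k$ the term $X_{r+1}^*$ is simply absent.
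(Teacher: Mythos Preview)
Your proof is correct and follows essentially the same route as the paper's: both use $B_r'$, $A_{r+1}'$, and $H_r$ from \lemref{CIPP}, gluing the pieces together via contraction (P$5$). The only difference is organisational---you bundle $(F_{1:r},X_{1:r}^*)$ together from $B_r'$ in a single step (your (B)), whereas the paper first obtains $\ind{W_{r+1}^*}{F_{1:k}}{W_{1:r}}$ and then adjoins $X_{1:r}^*$ by a further application of $B_r'$ and contraction; your packaging saves one contraction step.
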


\begin{proof}
 
  From $B_r'$, we have
  \begin{equation}
    \label{eq:irf}
  \ind {W_{r+1}^*} {F_{1:r}} {(F_{r+1:k},W_{1:r})}.
\end{equation}
Combining this with \eqref{needr} gives
\begin{displaymath}
  \ind  {(F_{r+1:k},W_{r+1}^*)} {F_{1:r}} {W_{1:r}},
\end{displaymath}
whence
\begin{equation}
  \label{eq:cc}
  \ind  {W_{r+1}^*} {F_{1:r}} {W_{1:r}}.
\end{equation}

Also, from $A_{r+1}'$,
  $$\ind {W_{r+1}^*} {F_{r+1:k}} {(F_{1:r},W_{1:r})}.$$
Together with \eqref{cc} this gives
  \begin{equation}
    \label{eq:z}
    \ind {W_{r+1}^*} {F_{1:k}} {W_{1:r}}.  
  \end{equation}  

 Also from $B_r'$ we have
 \begin{equation}
   \label{eq:chain}
   \ind {W_{r+1}^*} {X_{1:r}^*} {(F_{1:k},W_{1:r})}.
 \end{equation}
 Now combining \eqref{z} and \eqref{chain} we obtain \eqref{eachr}.
\end{proof}
\quad\\

To complete the proof of \thmref{stardaggen}, consider the sequence
  $$L^* = (F_1, \ldots, F_k, V_1,X_1^*,X_1,\ldots, V_k, X_k^*,X_k, V_{k+1}).$$
  which is consistent with the partial order of the ITT DAG $\dag^*$.
  Each $V_i$ may comprise a number of domain variables: we consider it
  as expanded into its constituent parts, respecting the partial order
  of $\dag$, and thus of $\dag^*$.

  To establish \thmref{stardaggen}, we show that each variable in
  $L^*$ is independent of its predecessors in $L^*$, conditional on
  its parent variables in $\dag^*$.

  \begin{enumerate}
  \item For each $F_i$, this holds by \eqref{CIP}.
  \item For an intervention target $X_i$, its only parents in $\dag^*$
    are $X_i^*$ and $F_i$.  By \eqref{daginter}, conditional on these
    $X_i$ is fully determined, hence independent of anything.
  \item \label{it:aa} Consider now a non-intervention domain variable,
    $U$ say.  Its parents in $\dag^*$ are the same as its parents in
    $\dag$.  Now $U$ is contained in $V_r$ for some $r$.  By
    \eqref{eachr} its conditional distribution, given all its
    predecessors in $L^*$, depends only on the preceding domain
    variables.  In particular, this conditional distribution, being
    the same in all regimes, must agree with that in the observational
    regime, whose independencies are encoded in the initial DAG
    $\dag$---and so depends only on the parents of $U$ in $\dag$, and
    hence in $\dag^*$.
  \item The remaining case, of an ITT variable $X_i^*$, follows
    similarly to \itref{aa}, on further noting that the parents of
    $X_i^*$ in $\dag^*$ are the same as the parents of $X_i$ in
    $\dag$, and $X_i^*$ is identical to $X_i$ in the observational
    setting.
  \end{enumerate}


\end{document}